\theoremstyle{plain}
\newtheorem{theorem}{Theorem}
\newtheorem{proposition}{Proposition}
\newtheorem{corollary}{Corollary}
\newtheorem{lemma}{Lemma}
\theoremstyle{definition}
\newtheorem{definition}{Definition}
\newtheorem{example}{Example}
\def\shf{\mathcal}
\def\cshf{\mathfrak}
\def\id{\textrm{id }}
\DeclareMathOperator{\vspan}{span}
\title{Cosheaf Representations of Relations and Dowker Complexes}
\author{Michael Robinson}
\address{Mathematics and Statistics\\
American University\\
Washington, DC, USA}
\email{michaelr@american.edu}
\begin{document}
\maketitle

\begin{abstract}
The Dowker complex is an abstract simplicial complex that is constructed from a binary relation in a straightforward way.  Although there are two ways to perform this construction -- vertices for the complex are either the rows or the columns of the matrix representing the relation -- the two constructions are homotopy equivalent.  This article shows that the construction of a Dowker complex from a relation is a non-faithful covariant functor.  Furthermore, we show that this functor can be made faithful by enriching the construction into a cosheaf on the Dowker complex.  The cosheaf can be summarized by an integer weight function on the Dowker complex that is a complete isomorphism invariant for the relation.  The cosheaf representation of a relation actually embodies both Dowker complexes, and we construct a duality functor that exchanges the two complexes.  Finally, we explore a different cosheaf that detects the failure of the Dowker complex itself to be a faithful functor.
\end{abstract}


\tableofcontents

\section{Introduction}

This article studies the structure of an abstract simplicial complex that is built according to a binary relation between two sets, as originally described by Dowker \cite{Dowker_1952}.  \emph{Dowker complexes}, as these simplicial complexes are now known, are simple to both construct and apply, finding use in many areas of mathematics and data science \cite{Ghrist_2014}.  Dowker's classic result is that there are two ways to build such an abstract simplicial complex, and that both of these complexes have the same homology.  This fact is a kind of \emph{duality}, because it arises from the transpose of the underlying relation's defining matrix.  It was later shown by Bj\"{o}rner \cite{bjorner1995topological} that the two dual Dowker complexes have homotopy equivalent geometric realizations.  

This article explains how the Dowker complex can be augmented with an integer weight function, and develops this idea into several functorial representations of the underlying relation.  Although the integer weight is not functorial, we show how it is the decategorification of a functorial, faithful cosheaf representation, and explore some of the implications of that fact.  In particular, Dowker's famous duality result arises as a functor that exchanges the base space and the space of global cosections of the cosheaf.  Considering only the Dowker complex without the weight function yields a non-faithful functor, since many different relations can have the same Dowker complex.  The article ends with the non-functorial construction of a cosheaf that captures the amount of \emph{redundancy} present in a relation -- a measure of how un-faithful the Dowker complex is for that particular relation.

Probably because of the topological nature of the duality result in \cite{Dowker_1952}, most of the literature discussing Dowker complexes focuses on their topological properties.  For instance, \cite{minian2010geometry} links the construction of the Dowker complex from a relation to the order complex of a partial order, and proves a number of homotopy equivalences.  Because Dowker complexes respect filtrations \cite{chowdhury2016persistent}, they seem ripe for use in topological data analysis, which typically focuses on the persistent homology of a filtered topological space.  This line of reasoning recently culminated in a functoriality result \cite[Thm. 3]{Chowdhury_2018}, which establishes that Dowker duality applies to the geometric realizations of sub-relations.  Specifically, consider a pair of nested subsets $R_1 \subseteq R_2 \subseteq (X \times Y)$ of the product of two sets $X$ and $Y$.  The Dowker complexes $D(X,Y,R_1)$ and $D(X,Y,R_2)$ for $R_1$ and $R_2$ and their duals $D(Y,X,R_1^T)$ and $D(Y,X,R_2^T)$ are related through a commutative diagram
\begin{equation*}
  \xymatrix{
    |D(X,Y,R_1)| \ar[r] \ar[d]_{\cong} & |D(X,Y,R_2)| \ar[d]_{\cong} \\
    |D(Y,X,R_1^T)| \ar[r] & |D(Y,X,R_2^T)| \\
    }
\end{equation*}
of continuous maps on their respective geometric realizations, in which the vertical maps are homotopy equivalences.

The paper \cite{Chowdhury_2018} appears to have set off a flurry of interest in Dowker complexes.  For instance, \cite{brun2019sparse} showed how to use Dowker complexes instead of \v{C}ech complexes for studying finite metric spaces.  Since topological data analysis often takes a finite metric space as an input, constructing a Vietoris-Rips complex is frequently an intermediate step; \cite{virk2019rips} shows how  Dowker complexes and Vietoris-Rips complexes are related.  Finally, \cite{salbu2019dowker} extended Dowker duality to simplicial sets, pointing the way to much greater generality.

The present paper is also inspired by the functoriality result \cite[Thm. 3]{Chowdhury_2018}, but in a somewhat different way.  Instead of focusing on sub-relations, we show that the Dowker complex construction is a functor from a category whose objects are relations and whose morphisms are relation-preserving transformations (Definition \ref{df:rel_cat}).  Furthermore, we show that the isomorphism classes of this category are completely characterized by two different \emph{weight functions} on the Dowker complex, and that these are derived from a faithful \emph{cosheaf representation} of the category.

Given a relation between two sets, the main results of this article are as follows:
\begin{enumerate}
\item The existence of two integer weighting functions, \emph{differential} and \emph{total} weights, on the Dowker complex for the relation that are complete isomorphism invariants (Theorems \ref{thm:differential_reconstruct} and \ref{thm:total_reconstruct}),
\item The Dowker complex is a functor from an appropriately constructed category of relations (Theorem \ref{thm:dowker_functor}),
\item The existence of faithful functors that render the relation into a cosheaf (Theorem \ref{thm:cosheaf_r0_functor} and Corollary \ref{cor:cosheaf_r_functor}) or sheaf (Theorem \ref{thm:sheaf_r0_functor}), whose (co)stalks determine the total weight function,
\item The space of global cosections of the cosheaf is the dual Dowker complex for the relation (Theorem \ref{thm:cosheaf_global_cosections}), and
\item There is a duality functor that exchanges the cosheaf's base space and space of global cosections (Theorem \ref{thm:cosheaf_dowker_duality}).
\end{enumerate}

\section{Recovery of a relation from a weight function on the Dowker complex}

\begin{definition}
  An \emph{abstract simplicial complex $X$ on a set $V_X$} consists of a set $X$ of subsets of $V_X$ such that if $\sigma \in X$ and $\tau \subseteq \sigma$, then $\tau \in X$.  Each $\sigma \in X$ is called a \emph{simplex of $X$}, and each element of $V_X$ is a \emph{vertex of $X$}.  Every subset $\tau$ of a simplex $\sigma$ is called \emph{face} of $\sigma$.
\end{definition}

It is usually tiresome to specify all of the simplices in a simplicial complex.  Instead, it is much more convenient to supply a \emph{generating set} $S$ of subsets of the vertex set.  The unique smallest simplicial complex containing the generating set is called the \emph{abstract simplicial complex generated by $S$}.

Let $R \subseteq X \times Y$ be a relation between finite sets $X$ and $Y$, which can be represented as a Boolean matrix $(r_{x,y})$.

\begin{definition}
  \label{df:dowker}
  The \emph{Dowker complex} $D(X,Y,R)$ is the abstract simplicial complex given by
  \begin{equation*}
    D(X,Y,R) = \{ [x_{i_0}, \dotsc, x_{i_k}] : \text{there exists a }y\in Y\text{ such that }(x_{i_j},y) \in R\text{ for all }j=0, \dotsc, k \}.
  \end{equation*}
  The \emph{total weight} is a function $t: D(X,Y,R)\to \mathbb{N}$ given by
  \begin{equation*}
    t(\sigma) = \# \{y \in Y : (x,y) \in R\text{ for all }x \in \sigma\}.
  \end{equation*}
  The \emph{differential weight} \cite{Ambrose_2020} is a function $d : D(X,Y,R) \to \mathbb{N}$ given by
  \begin{equation*}
    d(\sigma) = \# \{y \in Y: \left((x,y)\in R\text{ if } x\in \sigma\right) \text{ and } \left((x,y)\notin R\text{ if } x\notin \sigma\right) \}.
  \end{equation*}
\end{definition}

It is immediate by the definition that the Dowker complex is an abstract simplicial complex.

\begin{figure}
  \begin{center}
    \includegraphics[height=1in]{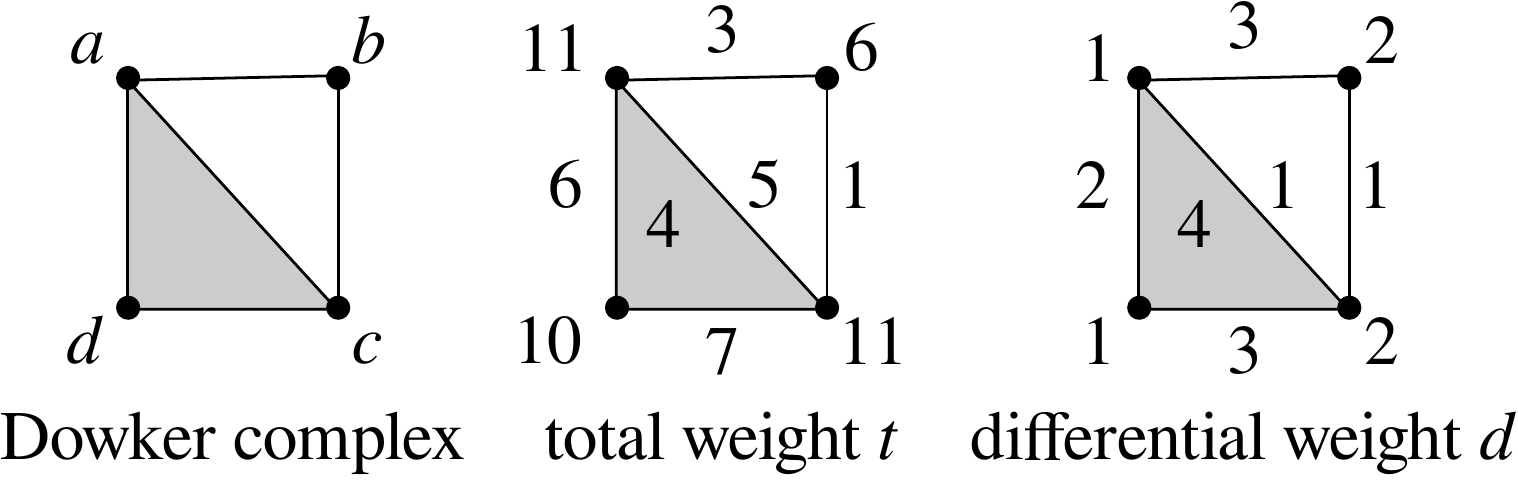}
    \caption{The Dowker complex for Example \ref{eg:eg1_dowker} (left), its total weight (center), and its differential weight (right).}
    \label{fig:eg1_dowker}
  \end{center}
\end{figure}

\begin{example}
  \label{eg:eg1_dowker}
  Consider the sets $X_1 = \{a,b,c,d\}$, and $Y_1=\{1,2, \dotsc, 20\}$ and the relation $R_1$ given by the matrix
  \begin{equation*}
    r_1 = \begin{pmatrix}
      1&0&0&0&0&0&1&1&{\bf 1}&1&1&1&{\bf 0}&0&0&0&1&1&1&{\bf 1}\\
      0&1&1&0&0&0&1&1&{\bf 1}&0&0&0&{\bf 1}&0&0&0&0&0&0&{\bf 0}\\
      0&0&0&1&1&0&0&0&{\bf 0}&1&0&0&{\bf 1}&1&1&1&1&1&1&{\bf 1}\\
      0&0&0&0&0&1&0&0&{\bf 0}&0&1&1&{\bf 0}&1&1&1&1&1&1&{\bf 1}\\
    \end{pmatrix}
  \end{equation*}  
  whose rows correspond to elements of $X_1$ and columns correspond to elements of $Y_1$.  The Dowker complex for this relation is generated by the simplices $[a,c,d]$, $[a,b]$, and $[b,c]$, a fact witnessed by the columns marked with {\bf bold} type.  The Dowker complex $D(X_1,Y_1,R_1)$ and its weighting functions are shown in Figure \ref{fig:eg1_dowker}.  Notice in particular that the differential weighting function counts the number of columns of $r_1$ of each simplex.  The total weighting accumulates all of the counts of columns for its faces as well.
\end{example}

\begin{figure}
  \begin{center}
    \includegraphics[height=1in]{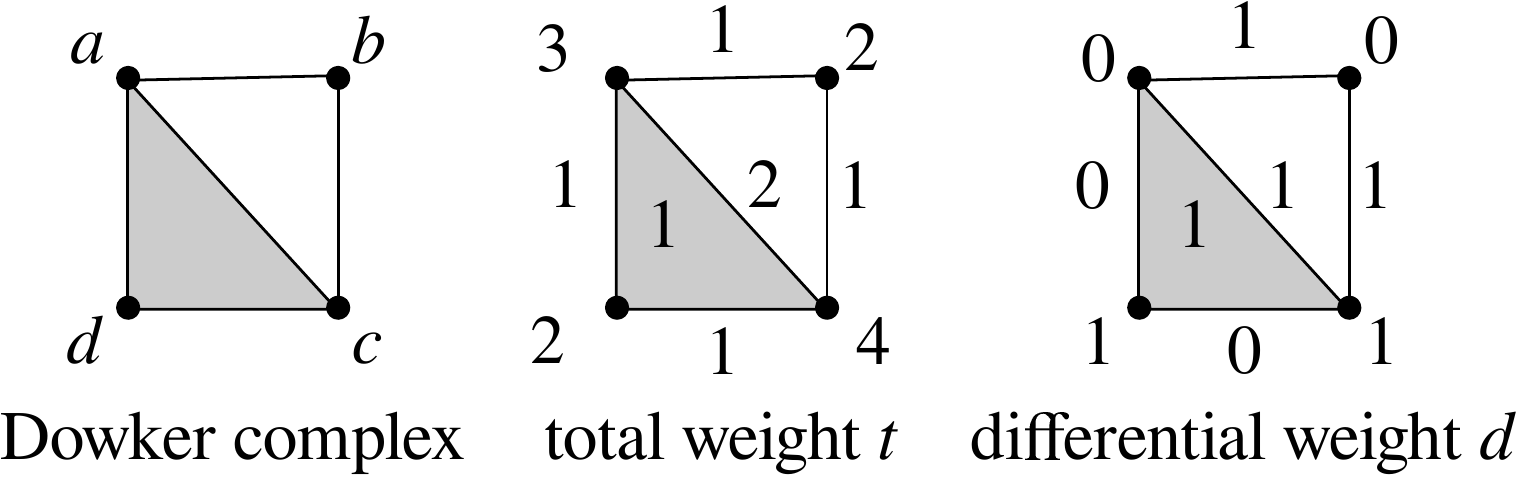}
    \caption{The Dowker complex for Example \ref{eg:eg2_dowker} (left), its total weight (center), and its differential weight (right).}
    \label{fig:eg2_dowker}
  \end{center}
\end{figure}

\begin{example}
  \label{eg:eg2_dowker}
  If we keep the same set $X_2=X_1$ as in Example \ref{eg:eg1_dowker}, but change the $Y_2$ set, with a different relation $R_2$ given by the matrix
  \begin{equation*}
    r_2 = \begin{pmatrix}
      1&0&1&0&0&1\\
      1&1&0&0&0&0\\
      0&1&1&1&0&1\\
      0&0&1&0&1&0\\
      \end{pmatrix}
  \end{equation*}
  we obtain the same Dowker complex, $D(X_2,Y_2,R_2) = D(X_1,Y_1,R_1)$.  However, as Figure \ref{fig:eg2_dowker} shows, the weight functions are different.
\end{example}

\begin{proposition}
  The sum of the differential weight $d$ on the Dowker complex $D(X,Y,R)$ is the number of elements of $Y$.
\end{proposition}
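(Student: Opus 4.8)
The plan is to reinterpret the differential weight as a counting function indexed by the \emph{support} of each element of $Y$. For each $y \in Y$, define its support $\sigma_y = \{x \in X : (x,y) \in R\}$, the set of all elements of $X$ related to $y$. Reading the definition of $d$ directly, the condition ``$(x,y) \in R$ if $x \in \sigma$ and $(x,y) \notin R$ if $x \notin \sigma$'' says precisely that $\sigma_y = \sigma$. Hence
\begin{equation*}
  d(\sigma) = \#\{y \in Y : \sigma_y = \sigma\}.
\end{equation*}

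First I would verify that each support $\sigma_y$ is genuinely a simplex of $D(X,Y,R)$: taking the witnessing element to be $y$ itself, the defining condition of the Dowker complex is satisfied, so $\sigma_y \in D(X,Y,R)$ whenever $\sigma_y$ is nonempty. This is the key structural observation, and it guarantees that the fibers $\{y : \sigma_y = \sigma\}$, ranging over $\sigma \in D(X,Y,R)$, are pairwise disjoint — since each $y$ has exactly one support — and that their union exhausts every $y$ with nonempty support.

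Then summing over all simplices telescopes immediately:
\begin{equation*}
  \sum_{\sigma \in D(X,Y,R)} d(\sigma) = \sum_{\sigma \in D(X,Y,R)} \#\{y \in Y : \sigma_y = \sigma\} = \#\{y \in Y : \sigma_y \neq \emptyset\},
\end{equation*}
since every $y$ with nonempty support is counted exactly once, by the simplex equal to its own support. There is no overcounting and no routine estimation to grind through; the result is essentially a repackaging of the partition of $Y$ by supports.

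The main obstacle is purely one of bookkeeping at the boundary: an element $y$ with $\sigma_y = \emptyset$ (a zero column of the matrix) is recorded by no nonempty simplex. The cleanest resolution is to adopt the convention that $\emptyset \in D(X,Y,R)$, which is consistent with the closure axiom for abstract simplicial complexes, so that $d(\emptyset)$ counts exactly these columns; the only alternative is to assume that every $y$ is related to at least one $x$. Under either convention the supports partition all of $Y$, the right-hand count above becomes $\# Y$, and the proposition follows.
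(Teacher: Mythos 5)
Your proof is correct and is essentially the paper's own argument: both decompose $Y$ into the disjoint fibers $\{y : \sigma_y = \sigma\}$ over simplices $\sigma$ and sum the differential weight over this partition. You are somewhat more explicit than the paper on two points it leaves implicit -- that each nonempty support is genuinely a simplex (witnessed by $y$ itself) and that zero columns are absorbed by $d(\emptyset)$, which the paper only flags in a parenthetical remark -- but the underlying approach is identical.
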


(Do not forget to count the differential weight of the empty simplex!)

\begin{proof}
  Observe that sets of the form $\{y \in Y: \left((x,y)\in R\text{ if } x\in \sigma\right) \text{ and } \left((x,y)\notin R\text{ if } x\notin \sigma\right) \}$ are disjoint for different $\sigma$ in $D(X,Y,R)$.  The sum of the differential weight is therefore the cardinality of 
    \begin{equation*}
      \sum_{\sigma \in D(X,Y,R)} d(\sigma) =
      \# \bigcup_{\sigma \in D(X,Y,R)} \{y \in Y: \left((x,y)\in R\text{ if } x\in \sigma\right) \text{ and } \left((x,y)\notin R\text{ if } x\notin \sigma\right) \},
    \end{equation*}
    which completes the argument.
\end{proof}

\begin{proposition}
  The total weight $t$ is a filtration on the Dowker complex $D(X,Y,R)$.
\end{proposition}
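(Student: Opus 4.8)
The plan is to unwind the definition of a filtration into a monotonicity condition on the simplices and then verify that condition by a direct set-inclusion argument. Recall that a function on a simplicial complex is a filtration precisely when its level sets assemble into a nested family of subcomplexes. Since the total weight takes its largest values on the smallest simplices, I would phrase the statement using superlevel sets, so that the whole proposition reduces to showing that $t(\tau) \ge t(\sigma)$ whenever $\tau \subseteq \sigma$ are simplices of $D(X,Y,R)$.

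For the core step, fix simplices $\tau \subseteq \sigma$ of $D(X,Y,R)$ and compare the two witnessing sets
\[
 A_\sigma = \{ y \in Y : (x,y) \in R \text{ for all } x \in \sigma \}, \qquad A_\tau = \{ y \in Y : (x,y) \in R \text{ for all } x \in \tau \}.
\]
Because every $x \in \tau$ also lies in $\sigma$, any $y$ satisfying the defining condition for $\sigma$ automatically satisfies the weaker condition for $\tau$; hence $A_\sigma \subseteq A_\tau$. Taking cardinalities yields $t(\sigma) = \# A_\sigma \le \# A_\tau = t(\tau)$, which is exactly the desired monotonicity. I would note in passing that every $\sigma \in D(X,Y,R)$ has $A_\sigma \ne \emptyset$ by Definition \ref{df:dowker}, so $t$ is strictly positive on the complex, and $t(\emptyset) = \# Y$ is the global maximum, consistent with $\emptyset$ being a face of every simplex.

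Finally I would assemble the filtration explicitly: for each $c \in \mathbb{N}$ set $D_{\ge c} = \{ \sigma \in D(X,Y,R) : t(\sigma) \ge c \}$. The monotonicity just established shows each $D_{\ge c}$ is closed under passage to faces, hence is a genuine subcomplex, and clearly $c \le c'$ implies $D_{\ge c'} \subseteq D_{\ge c}$, so these subcomplexes form a nested, decreasing family that exhausts $D(X,Y,R)$. The only point requiring care — and the closest thing to an obstacle here — is the orientation convention: the total weight \emph{decreases} as simplices grow, the opposite of the sublevel-set convention common in persistent homology, so the filtration must be taken by superlevel sets (equivalently, one may replace $t$ by $\#Y - t$ to recover the usual increasing sublevel-set form). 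Apart from fixing this convention, the argument is just the elementary inclusion $A_\sigma \subseteq A_\tau$ above.
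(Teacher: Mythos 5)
Your proof is correct and follows essentially the same route as the paper: the key step in both is the inclusion $\{y : (x,y)\in R \text{ for all } x\in\sigma\} \subseteq \{y : (x,y)\in R \text{ for all } x\in\tau\}$ for $\tau\subseteq\sigma$, which gives the order-reversing property of $t$ upon taking cardinalities. Your explicit assembly of the superlevel-set subcomplexes $D_{\ge c}$ merely spells out what the paper leaves implicit in the phrase ``this follows from showing that $t$ is order-reversing,'' so it is a welcome but not substantively different addition.
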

\begin{proof}
  This follows from showing that $t$ is order-reversing in the following way: if $\sigma \subseteq \tau$, then $t(\sigma) \ge t(\tau)$.

  Suppose $\sigma \subseteq \tau$, and that $y \in Y$ satisfies $(x,y) \in R$ for all $x \in \tau$.  Since $\sigma \subseteq \tau$, then $(x,y) \in R$ for all $x \in \sigma$ also.  Thus
  \begin{eqnarray*}
    \{y \in Y : (x,y) \in R\text{ for all }x \in \tau\} &\subseteq& \{y \in Y : (x,y) \in R\text{ for all }x \in \sigma\} \\
    \# \{y \in Y : (x,y) \in R\text{ for all }x \in \tau\} &\le& \# \{y \in Y : (x,y) \in R\text{ for all }x \in \sigma\} \\
    t(\tau) &\le& t(\sigma).
  \end{eqnarray*}
\end{proof}

\begin{theorem}
  \label{thm:differential_reconstruct}
  Given the Dowker complex $D(X,Y,R)$ and differential weight $d$, one can reconstruct $R$ up to a bijection on $Y$.
\end{theorem}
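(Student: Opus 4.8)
The plan is to show that the differential weight $d$ is precisely the function that counts, for each subset of $X$, how many columns of the matrix have that subset as their support, and then to argue that this multiset of supports determines $R$ up to relabeling of $Y$.

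First I would fix notation: for each $y \in Y$, write $\sigma_y = \{x \in X : (x,y) \in R\}$ for the support of the column indexed by $y$. The key observation is that $\sigma_y$ is always a simplex of $D(X,Y,R)$ — indeed $y$ itself witnesses that $(x,y) \in R$ for all $x \in \sigma_y$ — including the degenerate case $\sigma_y = \emptyset$, which is exactly why the empty simplex must be retained. Comparing this with the definition of $d$, one sees immediately that
\begin{equation*}
  d(\sigma) = \#\{y \in Y : \sigma_y = \sigma\},
\end{equation*}
so that $d$ is nothing but the fiber-counting function of the assignment $y \mapsto \sigma_y$. In other words, $d$ records the multiset of column supports of $R$, with $d(\sigma)$ equal to the multiplicity of $\sigma$; the disjointness of the fibers is precisely what was used in the first proposition above.

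Next I would observe that a relation $R \subseteq X \times Y$, with $X$ held fixed, is the same datum as the function $Y \to 2^X$, $y \mapsto \sigma_y$, since the matrix is rebuilt column-by-column from the supports. Two relations $R \subseteq X \times Y$ and $R' \subseteq X \times Y'$ are carried into one another by a bijection $Y \to Y'$ exactly when these support functions have the same fiber cardinality over every subset of $X$, i.e. exactly when their differential weights agree. This is the sense in which $d$ is a complete invariant up to a bijection on $Y$. To make the reconstruction explicit, I would then form, from the complex $D(X,Y,R)$ on the known vertex set $X$ together with $d$, a new index set $Y'$ containing $d(\sigma)$ distinct elements for each simplex $\sigma$ (including $\emptyset$), and define $R' \subseteq X \times Y'$ by declaring each such element to have support $\sigma$. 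By the preceding proposition $\#Y' = \sum_{\sigma} d(\sigma) = \#Y$, and by construction $R'$ and $R$ have identical multisets of supports, so any support-respecting bijection $Y' \to Y$ is an isomorphism $R' \cong R$.

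The argument is essentially bookkeeping rather than a deep obstacle, but the points requiring care are worth flagging: recognizing that every column support is genuinely a simplex (so that no column is invisible to $d$), remembering to include the empty simplex in the count, and treating the ambient set $X$ — including any all-zero rows, which contribute no simplex and so cannot be recovered from the complex itself — as fixed data rather than something to be reconstructed. Once these are settled, the entire content of the theorem reduces to the identity $d(\sigma) = \#\{y : \sigma_y = \sigma\}$ together with the triviality that a function to $2^X$ is determined up to reindexing by its fiber sizes.
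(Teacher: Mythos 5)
Your proposal is correct and follows essentially the same route as the paper's own (much terser) proof: both rest on the identity $d(\sigma) = \#\{y \in Y : \sigma_y = \sigma\}$, i.e.\ that the differential weight counts columns of the relation matrix whose support is exactly $\sigma$, from which the matrix is rebuilt column-by-column up to a bijection on $Y$. Your additional care about the empty simplex and about all-zero rows of the matrix (the ambient set $X$ being fixed data) makes explicit points the paper leaves implicit, but does not change the argument.
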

\begin{proof}
  The differential weight $d(\sigma)$ simply specifies the number of columns of the matrix $r$ for $R$ that can be realized as an indicator function for each $\sigma \in D(X,Y,R)$.  Thus, we can construct $r$ up to a column permutation.
\end{proof}

\begin{theorem}
  \label{thm:total_reconstruct}
  Given the Dowker complex $D(X,Y,R)$ and the total weight $t$, one can reconstruct $R$ up to a bijection on $Y$.
\end{theorem}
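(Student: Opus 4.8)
The plan is to reduce the statement to Theorem \ref{thm:differential_reconstruct} by recovering the differential weight $d$ from the total weight $t$ together with the combinatorial structure of the complex. Once $d$ is known, the previous theorem reconstructs $R$ up to a bijection on $Y$, which is exactly what is claimed.

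First I would record the precise relationship between the two weights. For each $y \in Y$ write $\supp(y) = \{x \in X : (x,y) \in R\}$; since $y$ itself witnesses it, $\supp(y)$ is a simplex of $D(X,Y,R)$ (the empty simplex when the column is identically zero). A given $y$ contributes to $t(\sigma)$ exactly when $\sigma \subseteq \supp(y)$, and it contributes to $d(\tau)$ for the \emph{unique} simplex $\tau = \supp(y)$. Partitioning the columns of $r$ according to their support therefore yields the identity
\begin{equation*}
  t(\sigma) = \sum_{\substack{\tau \in D(X,Y,R) \\ \tau \supseteq \sigma}} d(\tau),
\end{equation*}
where the sum ranges over every simplex of the complex containing $\sigma$ as a face.

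Next I would invert this identity. Because the complex is finite, I would argue by downward induction on the simplices ordered by inclusion. For a maximal simplex $\sigma$ there is no $\tau \supsetneq \sigma$ in the complex, so $d(\sigma) = t(\sigma)$. For a non-maximal $\sigma$, every simplex properly containing $\sigma$ has strictly larger cardinality and has already been processed, so
\begin{equation*}
  d(\sigma) = t(\sigma) - \sum_{\substack{\tau \in D(X,Y,R) \\ \tau \supsetneq \sigma}} d(\tau)
\end{equation*}
determines $d(\sigma)$. (Equivalently, this is Möbius inversion over the face poset of $D(X,Y,R)$, whose zeta transform is the displayed sum over up-sets.) This recursion is deterministic given $t$ and the complex, and the true differential weight is a solution of the triangular system above, so the values it produces are exactly $d$. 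Applying Theorem \ref{thm:differential_reconstruct} then reconstructs $R$ up to a bijection on $Y$.

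The part requiring care — and the main thing the argument turns on — is the bookkeeping at the bottom of the poset: the all-zero columns of $r$ register only in $d(\emptyset)$ and in $t(\emptyset) = \#Y$, so the induction must be run down to and including the empty simplex in order to recover the number of zero columns. Omitting $\emptyset$ would discard precisely these columns and reconstruct $R$ only up to their removal; including it (as flagged in the remark following the first Proposition) makes the reconstruction faithful up to a bijection on $Y$.
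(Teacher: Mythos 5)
Your proof is correct, but it is structured differently from the paper's. The paper gives a standalone greedy algorithm: repeatedly select a simplex $\sigma$ whose current weight is nonzero and which is maximal among such simplices, append $t_n(\sigma)$ indicator columns for $\sigma$ to the matrix under construction, subtract $t_n(\sigma)$ from the weight of every face of $\sigma$, and iterate until all weights vanish; correctness and the ``up to column permutation'' claim are argued informally along the way. You instead make the key identity $t(\sigma) = \sum_{\tau \supseteq \sigma} d(\tau)$ explicit, invert it by downward induction (M\"obius inversion) on the face poset to recover the differential weight $d$, and then invoke Theorem \ref{thm:differential_reconstruct} to finish. Mathematically these are the same inversion -- the paper's peeling step computes exactly $d(\sigma)$ for each maximal-among-nonzero $\sigma$, fused with the matrix construction -- but your version buys a cleaner logical structure: the relation between $t$ and $d$ is stated as a theorem-grade identity, uniqueness of the recovered $d$ follows immediately from the triangularity of the system, and the reconstruction step is delegated to the already-proved theorem rather than re-argued. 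Your explicit treatment of the empty simplex (recovering the count of all-zero columns from $t(\emptyset) = \#Y$) is also more careful than the paper, which relies on the parenthetical remark after its first proposition to flag this issue. What the paper's formulation buys in exchange is an executable procedure on the matrix itself, which it reuses in Example \ref{eg:eg4_dowker} to show that a filtration failing to be a total weight is detected by negative intermediate values; your inversion detects the same failure through negative values of the computed $d$, so nothing essential is lost.
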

\begin{proof}
  We construct the relation matrix $r$ of $R$ iteratively.  Let $t_0 = t$.
  \begin{enumerate}
  \item Set $r_0$ to the zero matrix with no columns and as many rows as vertices of $D(X,Y,R)$.  That is, each row of $r_0$ corresponds to an element of $X$, so let us index rows of $r_0$ by elements of $X$.
  \item If $t_n(\sigma) = 0$ for all simplices $\sigma \in D(X,Y,R)$, declare $r= r_n$ and exit.
  \item Select a simplex $\sigma$ with $t_n(\sigma) \not= 0$ such that either there is no simplex $\tau$ containing $\sigma$ as a face, or if such a $\tau$ exists, then $t_n(\tau) = 0$.
  \item Define $r_{n+1}$ to be the horizontal concatenation of $r_n$ with $t_n(\sigma)$ columns, each an indicator function for $\sigma$.  That is, each new column is a Boolean vector $v$ given by
    \begin{equation*}
      v_x = \begin{cases}
        0 & \text{if }x\notin\sigma\\
        1 & \text{if }x\in\sigma.
      \end{cases}
    \end{equation*}
  \item Define a new function $t_{n+1}:D(X,Y,R) \to \mathbb{N}$ by
    \begin{equation*}
      t_{n+1}(\gamma)= \begin{cases}
        t_n(\gamma) - t_n(\sigma)&\text{if }\gamma \subseteq \sigma\\
        t_n(\gamma)&\text{otherwise.}
      \end{cases}
    \end{equation*}
  \item Increment $n$
  \item Go to step (3).
  \end{enumerate}

  Since $t_{n+1} < t_n$ on at least one simplex, and the relation $R$ is finite, the algorithm will always terminate.

  Secondly, the update step for $r_{n+1}$ by adding columns, establishes that $r$ relates the elements of $X$ contained in a given simplex by the appended $\sigma$ columns.

  Thirdly, notice that the apparent ambiguity in step (3) about selecting a simplex $\sigma$ merely results in a column permutation, since two maximal simplices do not interact with the update to $t_{n+1}$ in step (5), since another maximal simplex is not a face of $\sigma$.
\end{proof}

\begin{figure}
  \begin{center}
    \includegraphics[height=4in]{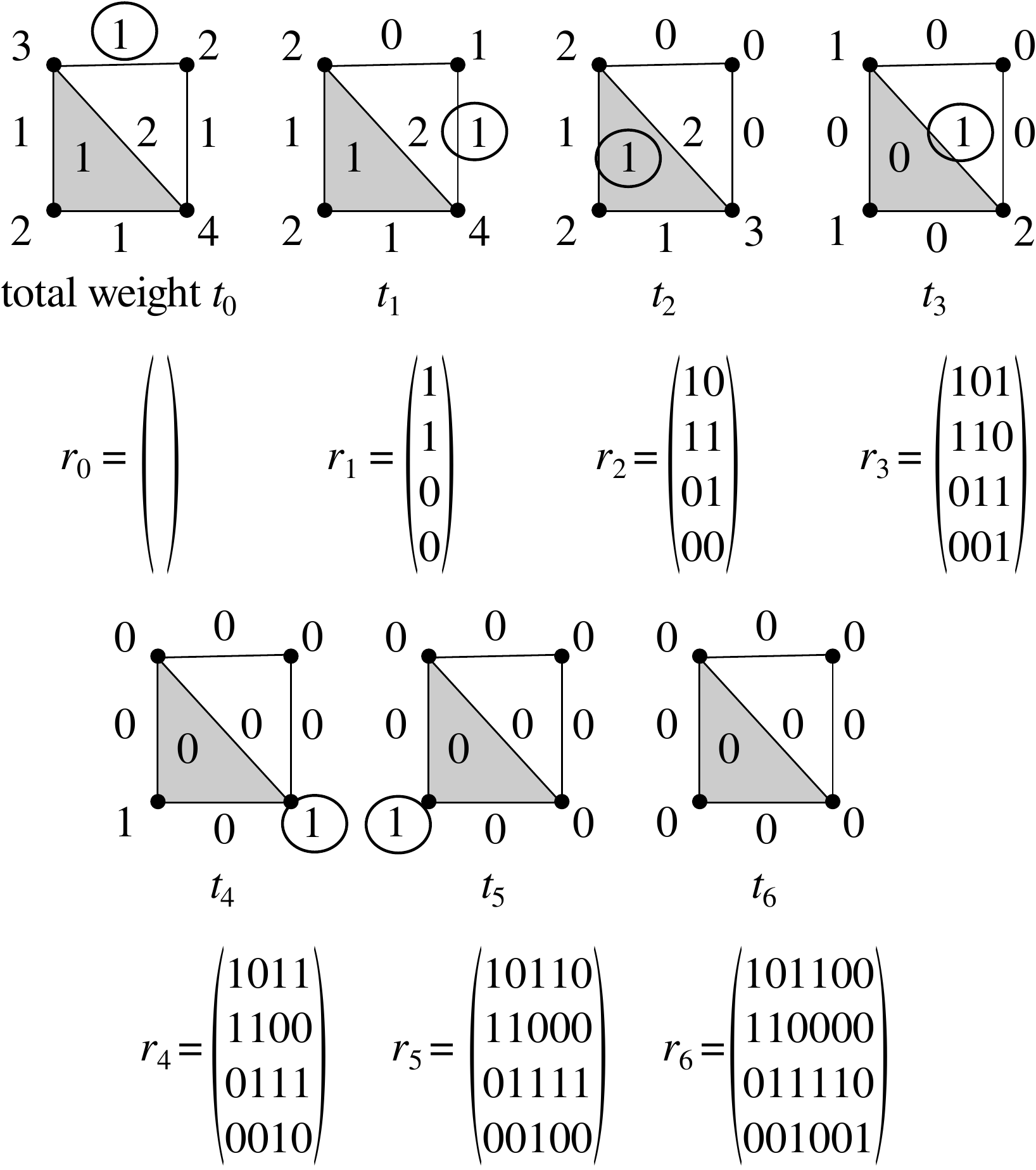}
    \caption{Recovering the relation from the total weight function as described in Example \ref{eg:eg3_dowker}.}
    \label{fig:eg3_dowker}
  \end{center}
\end{figure}

\begin{example}
  \label{eg:eg3_dowker}
  Starting with the relation from Example \ref{eg:eg2_dowker} and its total weight function, the algorithm described in the proof of Theorem \ref{thm:total_reconstruct} produces the relation matrix
  \begin{equation*}
    r= \begin{pmatrix}
      1&0&1&1&0&0\\
      1&1&0&0&0&0\\
      0&1&1&1&1&0\\
      0&0&1&0&0&1\\
      \end{pmatrix}
  \end{equation*}
  which differs from the original matrix $r_2$ by a cyclic permutation of the last three columns.  Figure \ref{fig:eg3_dowker} shows the progression of the steps of the algorithm.
\end{example}

\begin{figure}
  \begin{center}
    \includegraphics[height=1.5in]{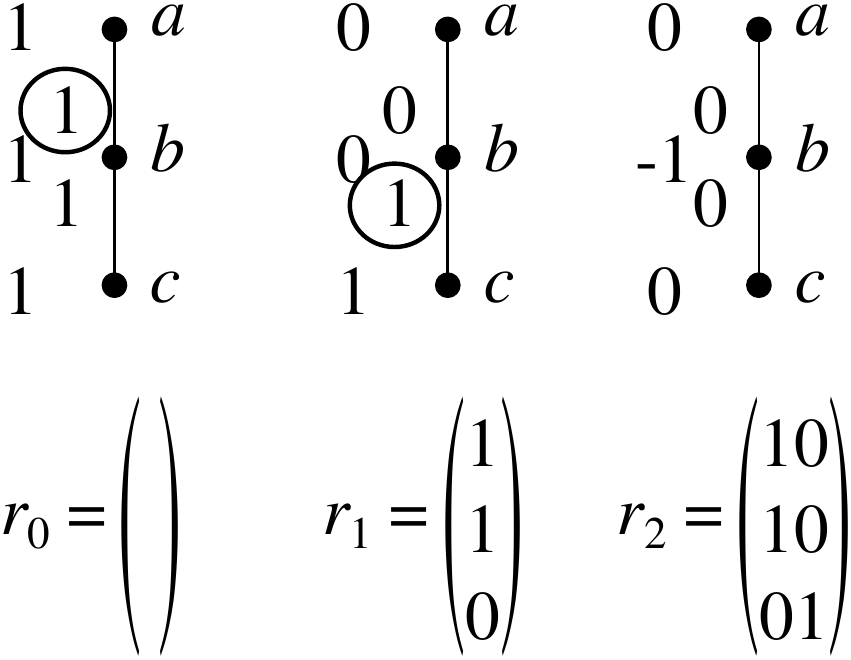}
    \caption{Attempting to recover the relation from a filtration that is not a total weight function can result in negative values, as described in Example \ref{eg:eg4_dowker}.}
    \label{fig:eg4_dowker}
  \end{center}
\end{figure}

\begin{example}
  \label{eg:eg4_dowker}
Not every nonnegative integer filtration of an abstract simplicial complex corresponds to the total weighting of a Dowker complex of a relation.  Although the algorithm in Theorem \ref{thm:total_reconstruct} may appear to run, it can produce negative values for the intermediate $t_\bullet$ weights, which cannot correspond to a number of columns in a relation!  For instance, the constant function on the simplicial complex generated by $[a,b]$ and $[b,c]$, as shown in Figure \ref{fig:eg4_dowker} is a filtration.  However, running the algorithm on this filtration produces a negative value at $[b]$, so we conclude that no relation can have this as a total weight function.
\end{example}


\section{Functoriality of the Dowker complex}

The Dowker complex $D(X,Y,R)$ is a functor between an appropriately constructed category of relations and the category of abstract simplicial complexes.  We prove this fact in Theorem \ref{thm:dowker_functor} along with a few other observations.

\begin{definition}
  \label{df:posets_cat}
  Consider an arbitrary set $P$ and a \emph{partial order} $\le$ on $P$.  A \emph{partial order} is a relation between elements in $P$ such that the following hold:
  \begin{enumerate}
  \item (Reflexivity) $x \le x$ for all $x \in P$,
  \item (Transitivity) if $x \le y$ and $y \le z$, then $x \le z$, and
  \item (Antisymmetry) if $x \le y$ and $y \le x$, then $x = y$.
  \end{enumerate}

  The \emph{category of partial orders} ${\bf Pos}$ has every partially ordered set $(P,\le)$ as an object.  Each morphism $g: (P,\le_P) \to (Q,\le_Q)$ consists of an \emph{order preserving function} $g: P \to Q$ such that if $x$ and $y$ are two elements of $P$ satisfying $x \le_P y$, then $g(x) \le_Q g(y)$ in $Q$.  Morphisms compose as functions on their respective sets.
\end{definition}

To see that ${\bf Pos}$ is a category, notice that the identity function is always order preserving and that the composition of two order preserving functions is another order preserving function.  Associativity follows from the associativity of function composition.

\begin{definition}
  The \emph{face partial order} for an abstract simplicial complex $X$ has the simplices of $X$ as its elements, and $\sigma \le \tau$ whenever $\sigma \subseteq \tau$.
\end{definition}

\begin{example}
  \label{eg:eg5_dowker}
  The face partial order for the Dowker complex shown in Figure \ref{fig:eg1_dowker} is given by its Hasse diagram
  \begin{equation*}
    \xymatrix{
                         &                      &  & [a,c,d]  & \\
      [a,b]              & [b,c]                & [a,c] \ar[ur]             & [a,d] \ar[u]     & [c,d] \ar[ul] \\
      [b] \ar[u] \ar[ur] & [a] \ar[ul] \ar[ur] \ar[urr] & [c]\ar[ul]\ar[u]\ar[urr] & [d] \ar[u]\ar[ur] &  \\
      }
  \end{equation*}
\end{example}

\begin{definition}
  \label{df:asc_cat}
Suppose $X$ and $Y$ are abstract simplicial complexes with vertex sets $V_X$ and $V_Y$, respectively.  A function $f: V_X \to V_Y$ on vertices is called a \emph{simplicial map} $f:X \to Y$ if it transforms each simplex $[v_0,\dotsb, v_k]$ of $X$ into a simplex $[f(v_0), \dotsb, f(v_k)]$ of $Y$, after removing duplicate vertices.  The category ${\bf Asc}$ has abstract simplicial complexes as its objects and simplicial maps as its morphisms.
\end{definition}

\begin{lemma}
  \label{lem:face_poset_functor}
  Let $f:X \to Y$ be a simplicial map.  For every pair of simplices $\sigma$, $\tau$ of $X$ satisfying $\sigma \subseteq \tau$, their images in $Y$ satisfy $f(\sigma) \subseteq f(\tau)$. 
\end{lemma}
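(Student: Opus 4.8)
The plan is to reduce the statement to the elementary fact that the set-theoretic image of a function preserves inclusions. First I would unpack the meaning of $f(\sigma)$: by Definition \ref{df:asc_cat}, the simplicial map $f$ is determined by a vertex function $f: V_X \to V_Y$, whose action on a simplex $\sigma = [v_0, \dotsc, v_k]$ is the simplex $[f(v_0), \dotsc, f(v_k)]$ of $Y$, taken with duplicate vertices removed. Regarding $\sigma$ as the set of vertices $\{v_0, \dotsc, v_k\}$, I would observe that this is exactly $f(\sigma) = \{ f(v) : v \in \sigma \}$, the ordinary image of the set $\sigma$ under the vertex map; the duplicate-removal clause becomes automatic once simplices are treated as sets.

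With this identification in hand, the inclusion is a one-line set-theoretic check. Given $\sigma \subseteq \tau$, I would take an arbitrary $w \in f(\sigma)$, write $w = f(v)$ for some $v \in \sigma$, note that $v \in \tau$ because $\sigma \subseteq \tau$, and conclude $w = f(v) \in f(\tau)$. Since $w$ is arbitrary, this yields $f(\sigma) \subseteq f(\tau)$. I would also remark that $f(\sigma)$ and $f(\tau)$ are genuine simplices of $Y$ precisely because $f$ is a simplicial map, so that the asserted inclusion is a meaningful relation in the face partial order of $Y$.

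I do not expect a genuine obstacle here: the content of the lemma is bookkeeping about what ``the image of a simplex'' means. The only point requiring a moment's care is the duplicate-removal clause in the definition of a simplicial map, which evaporates once simplices are regarded as sets of vertices rather than ordered tuples. In effect, this lemma records that sending a complex to its face partial order and a simplicial map to its induced map on simplices is well-defined and order-preserving --- i.e., lands in ${\bf Pos}$ --- which is exactly the functoriality that the subsequent development will use.
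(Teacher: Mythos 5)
Your proof is correct and follows essentially the same route as the paper's: both treat simplices as vertex sets, use the fact that a simplicial map acts vertexwise, and check the inclusion $f(\sigma) \subseteq f(\tau)$ by chasing an arbitrary vertex through the image. If anything, your write-up is slightly more complete, since you explicitly finish the element-chase that the paper's proof leaves implicit.
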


\begin{proof}
  Since $f$ is a simplicial map, then $f(\sigma)$ is a simplex of $Y$ and so is $f(\tau)$.  If $\sigma \subseteq \tau$, then every vertex $v$ of $\sigma$ is also a vertex of $\tau$.  By the definition of simplicial maps, $f(v)$ is a vertex of both $f(\sigma)$ and $f(\tau)$.  Conversely, every vertex of $f(\sigma)$ is the image of some vertex $w$ of $\sigma$.
\end{proof}

\begin{proposition}
  \label{prop:face_functor}
  The face partial order is a covariant functor $Face: {\bf Asc}\to{\bf Pos}$.
\end{proposition}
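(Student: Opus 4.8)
The plan is to exhibit $Face$ as an assignment on both the objects and the morphisms of ${\bf Asc}$, and then to check the two functor axioms. On objects, $Face(X)$ is the face partial order defined just above, whose underlying set is the collection of simplices of $X$ ordered by $\subseteq$. This is genuinely an object of ${\bf Pos}$ because set inclusion is reflexive, transitive, and antisymmetric, so its restriction to the simplices of $X$ inherits all three properties. On morphisms, given a simplicial map $f: X \to Y$, I would define $Face(f): Face(X) \to Face(Y)$ by $\sigma \mapsto f(\sigma)$, the image simplex obtained after deleting duplicate vertices as in Definition~\ref{df:asc_cat}.

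First I would confirm that $Face(f)$ is a well-defined morphism of ${\bf Pos}$. Well-definedness is immediate from Definition~\ref{df:asc_cat}: because $f$ is a simplicial map, $f(\sigma)$ is again a simplex, so $Face(f)$ really does send elements of $Face(X)$ to elements of $Face(Y)$. That $Face(f)$ is order preserving, which is precisely the condition required of a ${\bf Pos}$-morphism, is exactly the content of Lemma~\ref{lem:face_poset_functor}: whenever $\sigma \subseteq \tau$ in $X$, we have $f(\sigma) \subseteq f(\tau)$ in $Y$.

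It then remains to verify the two functoriality axioms. For the identity, $Face(\id_X)(\sigma) = \id_X(\sigma) = \sigma$, so $Face(\id_X) = \id_{Face(X)}$. For composition, given $f: X \to Y$ and $g: Y \to Z$, I would compare $Face(g \circ f)$ with $Face(g) \circ Face(f)$ on a simplex $\sigma$; both send $\sigma$ to the subset $\{g(f(v)) : v \in \sigma\}$ of $V_Z$. The one point that deserves attention, and which I expect to be the only genuine subtlety, is the duplicate-vertex removal built into the definition of a simplicial map's image: a priori one might worry that collapsing vertices in $f(\sigma)$ before applying $g$ could differ from collapsing only once at the end. It does not, because the image of a set is insensitive to repetitions, so $\{g(w) : w \in f(\sigma)\}$ and $\{g(f(v)) : v \in \sigma\}$ denote the same subset of $V_Z$. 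Hence the images compose on the nose and $Face(g \circ f) = Face(g) \circ Face(f)$, with everything else following formally from the fact that $Face(f)$ is induced by the set-level image of $f$.
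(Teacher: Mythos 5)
Your proof is correct and follows essentially the same route as the paper: both rely on Lemma~\ref{lem:face_poset_functor} to show $Face(f)$ is order preserving, and both verify composition by observing that the set-level image of a simplex under $g \circ f$ agrees with taking images in stages. Your additional checks (the identity axiom and the harmlessness of duplicate-vertex removal) are minor refinements the paper leaves implicit, not a different argument.
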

\begin{proof}
  The construction of the face partial order from a simplicial complex establishes how the functor transforms objects.  Let us denote the face partial order for $X$ by $Face(X)$.  Lemma \ref{lem:face_poset_functor} establishes that every simplicial map $f:X \to Y$ induces an order preserving function $Face(f) : Face(X) \to Face(Y)$ on the face partial orders for $X$ and $Y$.  To verify that the functor is covariant, suppose that we have another simplicial map $g: Y \to Z$.  The composition of these is a simplicial map $g \circ f : X \to Z$ that induces an order preserving map $Face(g\circ f) : Face(X) \to Face(Z)$ on the face partial orders for $X$ and $Z$.  On the other hand, $Face(g) \circ Face(f) : Face(X) \to Face(Z)$ is also an order preserving map.  Any simplex $\sigma$ in $X$ can be reinterpreted as an element of $Face(X)$, which means that the simplex $(g \circ f)(\sigma)$ of $Z$ corresponds to the same element of $Face(Z)$ as does $g( f(\sigma))$, thought of as the image of an element of $Face(Y)$.
\end{proof}

\begin{definition}
  \label{df:rel_cat} (which has \cite[Sec 3.3]{JoyofCats} or \cite[pg. 54]{Rydeheard_1988} as a special case, and is manifestly the same as what appears in \cite{brun2019sparse})
  The \emph{category of relations} ${\bf Rel}$ has triples $(X,Y,R)$ for objects, in which $X$, $Y$ are sets and $R \subseteq X \times Y$ is a relation.  A morphism $(X,Y,R) \to (X',Y',R')$ in ${\bf Rel}$ is defined by a pair of functions $f:X\to X'$, $g:Y\to Y'$ such that $(f(x),g(y))\in  R'$ whenever $(x,y)\in R$.  Composition of morphisms is simply the composition of the corresponding pairs of functions, which means that ${\bf Rel}$ satisfies the axioms for a category.  It will be useful to consider the full subcategory ${\bf Rel_+}$ of ${\bf Rel}$ in which each object $(X,Y,R)$ has the property that for each $x \in X$, there is a $y \in Y$ such that $(x,y) \in R$, and conversely for each $y \in Y$, there is an $x \in X$ such that $(x,y) \in R$.
\end{definition}

\begin{example}
  \label{eg:rel_morph}
  Consider the relation $R_1$ between the sets $X_1=\{a,b,c,d,e\}$ and $Y_1=\{1,2,3,4,5\}$, given by the matrix
  \begin{equation*}
    r_1 = \begin{pmatrix}
      1&1&0&0&0\\
      1&0&1&0&0\\
      0&1&1&1&1\\
      0&0&1&1&0\\
      0&0&0&1&1\\
      \end{pmatrix}.
  \end{equation*}
  
  Suppose that $X_2=\{A,B,C\}$ and $Y_2=\{1,2,3,4,5\}$, that $f:X_1 \to X_2$ is given by
  \begin{equation*}
    f(a) = A,\;     f(b) = B,\;     f(c) = C,\;     f(d) = C,\;     f(e) = C,
  \end{equation*}
  and that $g: Y_1 \to Y_2$ is given by the identity function, namely
  \begin{equation*}
    g(1) = 1,\;    g(2) = 2,\;    g(3) = 3,\;    g(4) = 4,\;    g(5) = 5.
  \end{equation*}
  Then $(f,g)$ is a ${\bf Rel}$ morphism $(X_1,Y_1,R_1) \to (X_2,Y_2,R_2)$ if $R_2$ is given by the matrix
  \begin{equation*}
    r_2 =\begin{pmatrix}
    1&1&0&0&0\\
    1&0&1&0&0\\
    0&1&1&1&1\\
    \end{pmatrix}.
  \end{equation*}

  Additionally if $g': Y_1 \to Y_2$ is given by
  \begin{equation*}
    g'(1) = 1,\;    g'(2) = 2,\;    g'(3) = 3,\;    g'(4) = 3,\;    g'(5) = 3,
  \end{equation*}
  then $(f,g')$ is a ${\bf Rel}$ morphism $(X_1,Y_1,R_1) \to (X_2,Y_2,R_3)$ if $R_3$ is given by the matrix
  \begin{equation*}
    r_3 =\begin{pmatrix}
    1&1&0&0&0\\
    1&0&1&0&0\\
    0&1&1&0&0\\
    \end{pmatrix}.
  \end{equation*}
  However, $(f,g)$ is not a ${\bf Rel}$ morphism $(X_1,Y_1,R_1) \to (X_2,Y_2,R_3)$, since $(f(e),g(5))=(C,5)$ is not in the relation $R_3$ even though $(e,5)$ is in $R_1$.
\end{example}

\begin{theorem}
  \label{thm:dowker_functor}
  The Dowker complex defined in Definition \ref{df:dowker} is a covariant functor $D: {\bf Rel} \to {\bf Asc}$.
\end{theorem}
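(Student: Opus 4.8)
The plan is to verify the two defining properties of a functor: that $D$ sends objects to objects (which is already established, since $D(X,Y,R)$ is an abstract simplicial complex by Definition~\ref{df:dowker}) and morphisms to morphisms, and that this assignment respects identities and composition. The crux is the morphism part: given a ${\bf Rel}$ morphism $(f,g):(X,Y,R)\to(X',Y',R')$, I must produce a simplicial map $D(f): D(X,Y,R)\to D(X',Y',R')$ and show it is well-defined.

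**First I would** define $D(f)$ on vertices to be the restriction of $f:X\to X'$ to the vertices of the Dowker complex. The key step is to check that this vertex map is actually a simplicial map in the sense of Definition~\ref{df:asc_cat}: for every simplex $[x_{i_0},\dotsc,x_{i_k}]\in D(X,Y,R)$, its image $[f(x_{i_0}),\dotsc,f(x_{i_k})]$ (after removing duplicates) must lie in $D(X',Y',R')$. By the definition of the Dowker complex, membership of $[x_{i_0},\dotsc,x_{i_k}]$ means there exists some $y\in Y$ with $(x_{i_j},y)\in R$ for all $j$. Applying the morphism property of $(f,g)$, I get $(f(x_{i_j}),g(y))\in R'$ for all $j$. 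Setting $y'=g(y)\in Y'$, this $y'$ witnesses that $[f(x_{i_0}),\dotsc,f(x_{i_k})]$ is a simplex of $D(X',Y',R')$. This is the heart of the argument, and it goes through cleanly because a single witness $y$ is transported by $g$ to a single witness $y'$.

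**The remaining steps are** the functoriality axioms. For identities: the identity morphism $(\id_X,\id_Y)$ induces the identity vertex map, hence the identity simplicial map, so $D(\id)=\id$. For composition: given a second morphism $(f',g'):(X',Y',R')\to(X'',Y'',R'')$, I need $D(f'\circ f)=D(f')\circ D(f)$. Since $D(f)$ is defined by restricting the vertex function and simplicial maps compose as functions on vertices, this reduces to the equality $(f'\circ f)|_V = f'|_{V'}\circ f|_V$ of vertex maps, which is immediate from associativity of function composition; the induced simplicial maps then agree because a simplicial map is determined by its action on vertices.

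**The main obstacle** is not difficulty but care: one should note that $f$ may collapse distinct vertices of a simplex to a single vertex in $X'$ (as in Example~\ref{eg:rel_morph}, where $f$ sends $c,d,e$ all to $C$), so the image simplex may have lower dimension. Definition~\ref{df:asc_cat} explicitly accommodates this by removing duplicate vertices, so the argument is unaffected; I would remark on this explicitly to confirm that collapsing causes no problem, since the witness $y'=g(y)$ still relates to every vertex in the (possibly smaller) image simplex. It is worth emphasizing that this is where the functor fails to be faithful: the same simplicial map can arise from morphisms into relations with different weight data, which motivates the cosheaf enrichment pursued later in the paper.
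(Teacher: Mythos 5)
Your proposal is correct and follows essentially the same route as the paper's proof: the heart of both arguments is transporting the witness $y$ for a simplex $\sigma$ through $g$ to obtain the witness $g(y)$ showing that $f(\sigma)$ is a simplex of the target Dowker complex, with functoriality then reducing to composition of vertex functions. Your additional remarks on identities and on the harmlessness of collapsed (duplicate) vertices are sound details that the paper leaves implicit.
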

\begin{proof}
  Given the construction of the Dowker complex $D(X,Y,R)$ from $R \subseteq X \times Y$, we must show that
  \begin{enumerate}
  \item Each morphism in ${\bf Rel}$ translates into a simplicial map, and
  \item Composition of morphisms in ${\bf Rel}$ translates into composition of simplicial maps.
  \end{enumerate}
  To that end, suppose that $(X_1,Y_1,R_1)$, $(X_2,Y_2,R_2)$, and $(X_3,Y_3,R_3)$ are three objects in ${\bf Rel}$ with $f_1 : X_1 \to X_2$, $g_1 : Y_1 \to Y_2$ defining a morphism $(X_1,Y_1,R_1) \to (X_2, Y_2, R_2)$, and with $f_2 : X_2 \to X_3$, $g_2 : Y_2 \to Y_3$ defining a morphism $(X_2,Y_2,R_2) \to (X_3, Y_3, R_3)$.  The first claim to be proven is that $f_1$ is the vertex function for a simplicial map $D(X_1,Y_1,R_1) \to D(X_2,Y_2,R_2)$.  Suppose that $\sigma$ is a simplex of $D(X_1,Y_1,R_1)$.  Under the vertex map $f_1$, the set of vertices of $\sigma$ get transformed into the set
  \begin{equation*}
    f_1(\sigma) = \{f_1(x) : x \in \sigma \}.
  \end{equation*}
  But the defining characteristic of $\sigma$ is that there is a $y \in Y$ such that $(x,y) \in R_1$ for each $x \in \sigma$.  Using the function $g_1$ and the fact that the pair $(f_1,g_1)$ is a ${\bf Rel}$ morphism, we have that $(f_1(x),g_1(y)) \in R_2$ for every $x \in \sigma$.  This means that the set $f_1(\sigma)$ is actually a simplex of $D(X_2,Y_2,R_2)$.  Since $\sigma$ was arbitrary, this establishes that $f_1$ is a simplicial map.

  Since the composition of the ${\bf Rel}$ morphisms $(f_2,g_2) \circ (f_1,g_1)$ is defined to be $(f_2 \circ f_1, g_2 \circ g_1)$, this means that $D$ is a covariant functor, since this composition of ${\bf Rel}$ morphisms becomes a composition of simplicial maps.
\end{proof}

\begin{figure}
  \begin{center}
    \includegraphics[height=1.75in]{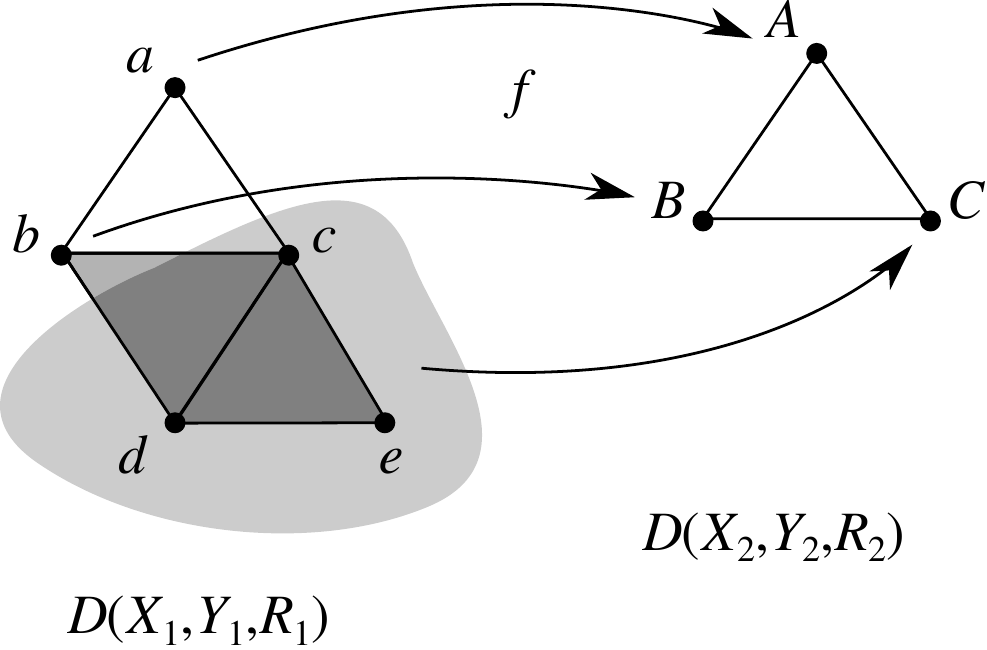}
    \caption{The simplicial map induced on the Dowker complexes by the ${\bf Rel}$ morphism $(f,g)$, which is used in Examples \ref{eg:rel_morph}, \ref{eg:dowker_map}, \ref{eg:rel_cosheaf_morph}, and \ref{eg:rel_cosheaf_dual_morph}.}
    \label{fig:dowker_map}
  \end{center}
\end{figure}

\begin{example}
  \label{eg:dowker_map}
  Continuing Example \ref{eg:rel_morph}, the Dowker complex $D(X_1,Y_1,R_1)$ is shown at left in Figure \ref{fig:dowker_map}.  The Dowker complexes $D(X_2,Y_2,R_2)$ and $D(X_2,Y_2,R_3)$ are identical, and are shown at right in Figure \ref{fig:dowker_map}.  The ${\bf Rel}$ morphism $(f,g)$ from Example \ref{eg:rel_morph} induces a simplicial map according Theorem \ref{thm:dowker_functor}.  The vertex function for this simplicial map is shown in Figure \ref{fig:dowker_map} as well.  The simplicial map collapses the simplex $[c,d,e]$ to the vertex $[C]$, while it collapses the simplex $[b,c,d]$ to the edge $[B,C]$.
\end{example}

Observe that ${\bf Pos}$ can be realized as a (non-full) subcategory of ${\bf Rel}$: each object in this subcategory is a partially ordered set $(X,\le_X)$ realized as $(X,X,\le_X)$, and each order preserving function $f: (X,\le_X) \to (Y,\le_Y)$ corresponds to a ${\bf Rel}$ morphism $(f,f) : (X,X,\le_X) \to (Y,Y,\le_Y)$ since the axioms coincide.  Beyond this relationship between ${\bf Pos}$ and ${\bf Rel}$, there is a different, functorial relationship.

\begin{proposition}
  \label{prop:posetrep_functor}
  There is a covariant functor $PosRep : {\bf Rel} \to {\bf Pos}$, called the \emph{poset representation} of a relation, that takes each $(X,Y,R)$ to a collection $PosRep(X,Y,R)$ of subsets of $2^X$, for which $A \in PosRep(X,Y,R)$ if there is a $y\in Y$ such that $(x,y)\in R$ for every $x\in A$.  The elements of $PosRep(X,Y,R)$ are ordered by subset inclusion.
\end{proposition}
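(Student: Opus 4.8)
The plan is to define $PosRep$ on morphisms by the direct-image map and then check the three functor axioms, leaning heavily on arguments already established for the Dowker complex. On objects there is essentially nothing to do: $PosRep(X,Y,R)$ is a family of subsets of $X$ ordered by inclusion, and set inclusion is automatically reflexive, transitive, and antisymmetric, so this is a legitimate object of ${\bf Pos}$.

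For a morphism $(f,g):(X,Y,R)\to(X',Y',R')$ of ${\bf Rel}$, I would set $PosRep(f,g)$ to be the map $A \mapsto f(A) := \{f(x) : x \in A\}$. The key step is to verify this is well defined, that is, that $f(A)$ really belongs to $PosRep(X',Y',R')$ whenever $A \in PosRep(X,Y,R)$. This is exactly the witness-transport argument from the proof of Theorem \ref{thm:dowker_functor}: if $y \in Y$ satisfies $(x,y)\in R$ for all $x \in A$, then because $(f,g)$ is a ${\bf Rel}$ morphism we obtain $(f(x),g(y)) \in R'$ for all $x \in A$, so $y' = g(y)$ witnesses membership of $f(A)$ in the target poset. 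I expect this to be the only step carrying any content, since it is the one place the relation-morphism hypothesis is actually used.

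It then remains to show $PosRep(f,g)$ is order preserving and that the construction respects identities and composition. Monotonicity is immediate, since $A \subseteq B$ forces $f(A)\subseteq f(B)$; this is the same observation as Lemma \ref{lem:face_poset_functor}. Identities go to identities because $f=\mathrm{id}$ gives $f(A)=A$, and composition is preserved because $(f_2\circ f_1)(A) = f_2(f_1(A))$ matches the composite $PosRep(f_2,g_2)\circ PosRep(f_1,g_1)$, so covariance follows.

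Finally, a cleaner packaging I would record is that $PosRep$ is essentially the composite $Face \circ D$ of the two functors already in hand (Theorem \ref{thm:dowker_functor} and Proposition \ref{prop:face_functor}), up to the single point of whether the empty set is admitted as an element. Since a composite of covariant functors is again a covariant functor, this yields the result at once, and the direct verification above is really just an unwinding of that composition.
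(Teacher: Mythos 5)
Your proposal is correct and takes essentially the same route as the paper's own proof: the morphism action is the direct-image map $A \mapsto f(A)$, well-definedness is exactly the witness-transport argument (the witness $y$ for $A$ maps to the witness $g(y)$ for $f(A)$ via the ${\bf Rel}$ morphism property), and order preservation and composition are handled just as in Theorem \ref{thm:dowker_functor}. Your closing remark that $PosRep$ is the composite $Face \circ D$ is precisely the content of the paper's subsequent Proposition \ref{prop:posetrep_factor}, so that alternative packaging is also consistent with the paper, merely stated there as a separate result rather than used as the proof.
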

\begin{proof}
  $PosRep$ translates morphisms in ${\bf Rel}$ into order preserving maps among partially ordered sets.  Specifically, the morphism $(X,Y,R) \to (X',Y',R')$ implemented by $f:X\to X'$ and $g:Y \to Y'$ is transformed into the function that takes $A\in PosRep(X,Y,R)$ to $f(A)$.  By definition there is a $y\in Y$ such that $(x,y)\in R$ for every $x\in A$.  Therefore, $(f(x),g(y))\in R'$ by construction. Since each $x' \in f(A)$ is given by $x'=f(x)$ for some $x$, this means that $(x',g(y))\in R'$ for all $x \in f(A)$.  Thus, $f(A) \in PosRep(X',Y',R')$.  Furthermore, the order relation among subsets is evidently preserved.
  
 The same argument from the proof of Theorem \ref{thm:dowker_functor} can be used \emph{mutatis mutandis} to show that $PosRep$ is a covariant functor, namely that composition of morphisms is preserved in order.
\end{proof}

\begin{proposition}
  \label{prop:posetrep_factor}
  The composition of the Dowker functor $D : {\bf Rel} \to {\bf Asc}$ with the face partial order functor $Face : {\bf Asc} \to {\bf Pos}$ yields the $PosRep$ functor.
\end{proposition}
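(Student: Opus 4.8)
The plan is to establish the equality of functors $Face \circ D = PosRep$ by verifying that the two composites agree on objects and on morphisms separately, exploiting the fact that an honest equality (not merely a natural isomorphism) is being asserted here.

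First, on objects, I would compare the underlying sets directly. By Definition \ref{df:dowker}, a subset $A \subseteq X$ is a simplex of $D(X,Y,R)$ precisely when there exists $y \in Y$ with $(x,y)\in R$ for every $x\in A$. This is word-for-word the membership condition defining $PosRep(X,Y,R)$ in Proposition \ref{prop:posetrep_functor}. Applying $Face$ to $D(X,Y,R)$ equips its simplices with the order of subset inclusion, and $PosRep(X,Y,R)$ carries exactly the same order by construction; hence $Face(D(X,Y,R))$ and $PosRep(X,Y,R)$ are literally the same partially ordered set. The only point worth checking at this stage is that the correspondence is exact at the boundary cases — in particular that the empty simplex is handled consistently on both sides — but this is immediate, since the empty set satisfies the (vacuous) membership condition in both constructions.

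Next, on morphisms, I would take a ${\bf Rel}$ morphism $(f,g):(X,Y,R)\to(X',Y',R')$ and trace it through each composite. Under $D$ (Theorem \ref{thm:dowker_functor}) this morphism becomes the simplicial map whose vertex function is $f$, and applying $Face$ to a simplicial map sends each simplex $\sigma$ to its image $f(\sigma) = \{f(x) : x\in\sigma\}$, as recorded in Lemma \ref{lem:face_poset_functor} and Proposition \ref{prop:face_functor}. On the other side, $PosRep(f,g)$ is by definition the map $A \mapsto f(A)$. Since the ``remove duplicate vertices'' clause in the definition of a simplicial map produces exactly the set-image $f(\sigma)$, the two assignments coincide on every simplex, so $Face(D(f,g)) = PosRep(f,g)$ as order-preserving maps.

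Having matched both the object assignment and the morphism assignment, I conclude that the two functors are equal. I expect no genuine obstacle: the content of the proposition is essentially that the Dowker complex and $PosRep$ are two names for the same combinatorial object — the first packaged as a simplicial complex, the second as its face poset. The only care required is bookkeeping, namely confirming that image-taking under a simplicial map agrees with image-taking under $PosRep$, and that degenerate cases such as the empty simplex align on both sides.
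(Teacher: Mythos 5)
Your proof is correct and takes essentially the same approach as the paper's: both arguments identify $(Face \circ D)(X,Y,R)$ and $PosRep(X,Y,R)$ as literally the same set with the same subset-inclusion order (since the membership conditions in Definition \ref{df:dowker} and Proposition \ref{prop:posetrep_functor} are word-for-word identical), and both note that under this identification each ${\bf Rel}$ morphism $(f,g)$ induces the same map $\sigma \mapsto f(\sigma)$ on either side. The paper states this as a one-line observation; your version simply spells out the same identification in more detail, including the harmless boundary case of the empty simplex.
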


In brief, the diagram
  \begin{equation*}
    \xymatrix{
      {\bf Rel} \ar[r]^{D} \ar[dr]_-{PosRep} & {\bf Asc} \ar[d]^{Face} \\
      & {\bf Pos}
      }
  \end{equation*}
  of functors commutes.

\begin{proof}
  To establish this result, we merely need to observe that the set of elements of $(Face \circ D)(X,Y,R)$ is the same set as $PosRep(X,Y,R)$, with the same order relation (subset inclusion).  Under that identification, the morphisms are the same, too.
\end{proof}

\begin{figure}
  \begin{center}
    \includegraphics[height=2.5in]{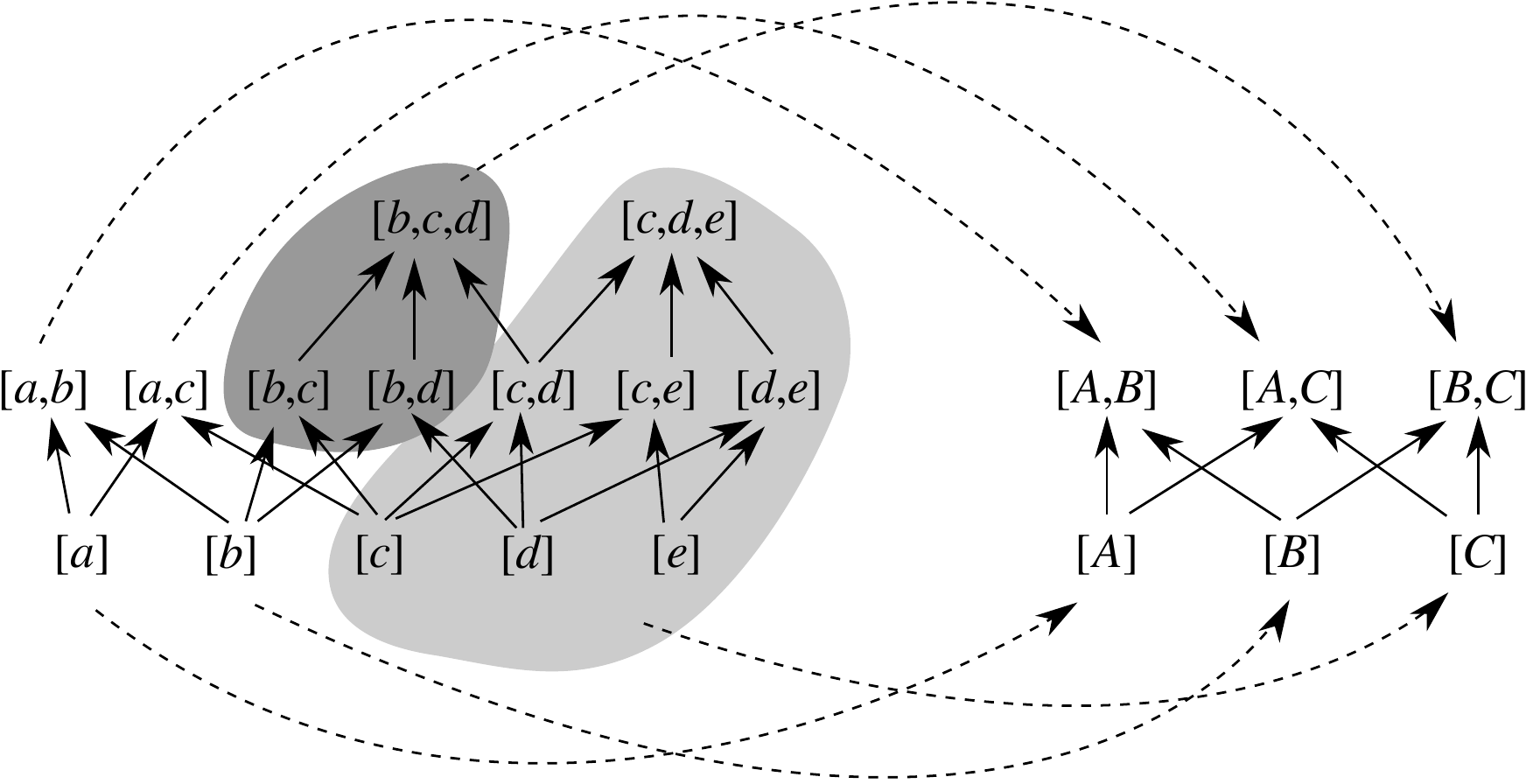}
    \caption{The order preserving map induced by the ${\bf Rel}$ morphism $(f,g)$ in Example \ref{eg:rel_morph} via the $PosRep$ functor.  See Example \ref{eg:dowker_poset_map}}
    \label{fig:dowker_poset_map}
  \end{center}
\end{figure}

\begin{example}
  \label{eg:dowker_poset_map}
  Continuing Examples \ref{eg:rel_morph} and \ref{eg:dowker_map}, the order preserving map induced by the ${\bf Rel}$ morphism $(f,g)$ is a bit tedious to construct from the data in Example \ref{eg:rel_morph}.  It is much more convenient to work from the simplicial map shown in Figure \ref{fig:dowker_map}.  Simply note that the only two nontrivial actions to be captured are related to the collapse of the simplices $[b,c,d]$ and $[c,d,e]$.  All of the faces of $[c,d,e]$ are mapped to $[C]$, while the remaining two edges of $[b,c,d]$ (those that are not also faces of $[c,d,e]$) are mapped to $[B,C]$.
\end{example}

\section{Functoriality of (co)sheaves on Dowker complexes}
\label{sec:functoriality}

  The Dowker functor $D: {\bf Rel} \to {\bf Asc}$ is not faithful: non-isomorphic relations can have the same Dowker complex.  The weighting functions distinguish between ${\bf Rel}$ isomorphism classes.  ${\bf Rel}$ morphisms sometimes induce transformations between weighting functions, for instance if the morphism transforms only the rows, or if the morphism permutes columns.  However, this does not always occur.  For instance, if the columns of one relation are included into another while simultaneously the rows are combined, the resulting transformation on the weighting functions is not described in a convenient way.  We really need a richer category for weighted Dowker complexes; this is found in the categories of cosheaves or of sheaves.  Specifically, each relation can be rendered as a cosheaf of sets, whose costalk cardinality is the total weight function on the Dowker complex.  Alternatively, a relation can be transformed into a sheaf of vector spaces, whose stalk dimensions specify a total weight function on the Dowker complex.  

  For convenience, let us begin by defining
  \begin{equation*}
    Y_\sigma = \{y\in Y : (x,y) \in R\text{ for all }x\in\sigma\}
  \end{equation*}
  for a simplex $\sigma$ of $D(X,Y,R)$.  The total weight function is simply the cardinality of this set: $t(\sigma) = \# Y_\sigma$.

  \begin{lemma}
    \label{lem:y_inclusion}
    If $\sigma \subseteq \tau$ are two simplices of $D(X,Y,R)$, then $Y_\tau \subseteq Y_\sigma$.
  \end{lemma}
  \begin{proof}
    Suppose $y \in Y_\tau$, so that $(x,y) \in R$ for all $x\in \tau$.  Since $\sigma \subseteq \tau$, it follows that $(x,y) \in R$ for all $x\in \sigma$.  Thus $y\in Y_\sigma$.
  \end{proof}

  \begin{lemma}
    \label{lem:y_morphism}
    For each simplex $\sigma$ of $D(X_1,Y_1,R_1)$, and each ${\bf Rel}$ morphism $(f,g) : (X_1,Y_1,R_1) \to (X_2,Y_2,R_2)$,
    \begin{equation*}
      g\left(\left(Y_1\right)_\sigma\right) \subseteq \left(Y_2\right)_{f(\sigma)}.
    \end{equation*}
  \end{lemma}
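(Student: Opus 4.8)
The plan is to prove the inclusion by a direct element chase, since both $g\left(\left(Y_1\right)_\sigma\right)$ and $\left(Y_2\right)_{f(\sigma)}$ are subsets of $Y_2$, and the defining axiom of a ${\bf Rel}$ morphism is precisely the mechanism that converts membership in $R_1$ into membership in $R_2$. I would therefore fix an arbitrary element of the left-hand set and track it through the definitions until it is exhibited as an element of the right-hand set.

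First I would fix an arbitrary $y' \in g\left(\left(Y_1\right)_\sigma\right)$. By definition of the image of a set under $g$, we may write $y' = g(y)$ for some $y \in \left(Y_1\right)_\sigma$. Unwinding the definition of $\left(Y_1\right)_\sigma$ then gives $(x,y) \in R_1$ for every $x \in \sigma$. Next, because $(f,g)$ is a ${\bf Rel}$ morphism, the morphism property from Definition \ref{df:rel_cat} — that $(f(x),g(y)) \in R_2$ whenever $(x,y) \in R_1$ — applies to each such pair, yielding $\left(f(x), g(y)\right) \in R_2$ for every $x \in \sigma$.

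To finish, I would observe that the vertex set of the simplex $f(\sigma)$ is exactly $\{ f(x) : x \in \sigma \}$, so every vertex $x'$ of $f(\sigma)$ has the form $x' = f(x)$ for some $x \in \sigma$. The line just established therefore says precisely that $(x', y') \in R_2$ for every vertex $x'$ of $f(\sigma)$, which is the defining condition for $y' \in \left(Y_2\right)_{f(\sigma)}$. Since $y'$ was arbitrary, the claimed inclusion follows.

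I do not expect a genuine obstacle here; the statement is essentially a reformulation of the ${\bf Rel}$ morphism axiom at the level of the sets $Y_\sigma$, and is the natural companion to Lemma \ref{lem:y_inclusion}. The only point that warrants a moment's care is that $f$ may identify distinct vertices of $\sigma$, so that $f(\sigma)$ should be read as the image set with duplicate vertices removed (as in Definition \ref{df:asc_cat}); because the whole argument is phrased through set membership rather than through an enumerated vertex list, this collapsing is harmless and does not affect the quantifier ``for every vertex $x'$ of $f(\sigma)$.''
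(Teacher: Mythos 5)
Your proof is correct and follows essentially the same element-chasing argument as the paper: take $y' = g(y)$ with $y \in (Y_1)_\sigma$, apply the ${\bf Rel}$ morphism axiom to each pair $(x,y)$ with $x \in \sigma$, and conclude $y' \in (Y_2)_{f(\sigma)}$. Your closing remark about $f$ possibly identifying vertices of $\sigma$ is a sound observation that the paper leaves implicit, but it does not change the substance of the argument.
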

  \begin{proof}
    Suppose $z \in g\left(\left(Y_1\right)_\sigma\right)$, which means that $z = g(y)$ for some $y \in Y_1$ that satisfies $(x,y) \in R_1$ for all $x \in \sigma$.  Since $(f,g)$ is a ${\bf Rel}$ morphism, this means that $(f(x),g(y))=(f(x),z) \in R_2$ for all $x \in \sigma$ as well.  Therefore, $z \in \left(Y_2\right)_{f(\sigma)}$.
  \end{proof}

  \begin{corollary}
    \label{cor:y_diagram}
  As a result of Lemmas \ref{lem:y_inclusion} and \ref{lem:y_morphism}, the diagram
  \begin{equation*}
    \xymatrix{
      \left(Y_1\right)_\tau \ar[r]^-{g} \ar[d]_{\subseteq} & \left(Y_2\right)_{f(\tau)}\ar[d]^{\subseteq} \\
      \left(Y_1\right)_\sigma \ar[r]_-{g} & \left(Y_2\right)_{f(\sigma)}\\
      }
  \end{equation*}
  commutes when $(f,g)$ is a ${\bf Rel}$ morphism.
  \end{corollary}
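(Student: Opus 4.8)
The plan is to check two things: that every arrow in the square is a well-defined map between the indicated sets, and that the two composite maps from the top-left corner $(Y_1)_\tau$ to the bottom-right corner $(Y_2)_{f(\sigma)}$ agree. Since the two horizontal arrows are both instances of $g$ and the two vertical arrows are honest set inclusions, I expect commutativity to collapse to the tautology that applying $g$ to an element $y$ yields $g(y)$ no matter which domain we regard $y$ as inhabiting. So essentially all of the work is in the well-definedness verifications.

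First I would confirm three of the four arrows directly from the hypotheses. The left vertical inclusion $(Y_1)_\tau \hookrightarrow (Y_1)_\sigma$ is legitimate because $\sigma \subseteq \tau$, so Lemma \ref{lem:y_inclusion} gives $(Y_1)_\tau \subseteq (Y_1)_\sigma$. The top arrow is the restriction of $g$ to $(Y_1)_\tau$; Lemma \ref{lem:y_morphism} applied to the simplex $\tau$ shows $g\left((Y_1)_\tau\right) \subseteq (Y_2)_{f(\tau)}$, so this restriction indeed lands in the claimed codomain. Likewise the bottom arrow is the restriction of $g$ to $(Y_1)_\sigma$, which by Lemma \ref{lem:y_morphism} applied to $\sigma$ maps into $(Y_2)_{f(\sigma)}$.

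The one arrow requiring a small extra observation is the right vertical inclusion $(Y_2)_{f(\tau)} \hookrightarrow (Y_2)_{f(\sigma)}$, and this is where I would direct whatever care the argument needs. To invoke Lemma \ref{lem:y_inclusion} inside $(X_2,Y_2,R_2)$, I first need $f(\sigma) \subseteq f(\tau)$. This follows from $\sigma \subseteq \tau$ together with Lemma \ref{lem:face_poset_functor} (or directly, since $f$ acts as a function on vertices). With $f(\sigma) \subseteq f(\tau)$ in hand, Lemma \ref{lem:y_inclusion} yields $(Y_2)_{f(\tau)} \subseteq (Y_2)_{f(\sigma)}$, so the inclusion is defined.

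Finally I would chase an arbitrary element $y \in (Y_1)_\tau$ around the square. Going right then down sends $y \mapsto g(y)$ and then fixes $g(y)$ under the inclusion into $(Y_2)_{f(\sigma)}$; going down then right fixes $y$ under the inclusion into $(Y_1)_\sigma$ and then sends it to $g(y)$. Both paths produce the same element $g(y)$, which establishes commutativity. The only genuine content is the well-definedness check for the right-hand inclusion, so I anticipate no real obstacle beyond recording that $\sigma \subseteq \tau$ forces $f(\sigma) \subseteq f(\tau)$; everything else is the observation that a set inclusion does not move points.
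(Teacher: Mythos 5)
Your proof is correct and takes essentially the same route as the paper: the paper offers no separate argument for this corollary, stating it as an immediate consequence of Lemmas \ref{lem:y_inclusion} and \ref{lem:y_morphism}, and your write-up simply makes explicit the well-definedness checks (including the right-hand inclusion via $f(\sigma)\subseteq f(\tau)$) and the trivial element chase that those citations implicitly carry.
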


  Although the total and differential weight functions are complete isomorphism invariants for ${\bf Rel}$, they are not functorial.  To remedy this deficiency, these weights can be thought of as summaries of a somewhat more sophisticated object: a \emph{cosheaf} or a \emph{sheaf}.

  \begin{definition} \cite{Baclawski_1975}
    \label{df:cosheaf_sheaf}
  A \emph{cosheaf of sets} $\cshf{C}$ on a partial order $(X,\le)$ consists of the following specification:
  \begin{itemize}
  \item For each $x \in X$, a set $\cshf{C}(x)$, called the \emph{costalk at $x$}, and
  \item For each $x \le y \in X$, a function $\left(\cshf{C}(x \le y)\right) : \cshf{C}(y) \to \cshf{C}(x)$, called the \emph{extension along $x \le y$}, such that
  \item Whenever $x \le y \le z \in X$, $\cshf{C}(x \le z) = \left(\cshf{C}(x \le y)\right) \circ \left(\cshf{C}(y \le z)\right)$.
  \end{itemize}
  Briefly, a cosheaf is a contravariant functor to the category ${\bf Set}$ from the \emph{category generated by $(X,\le)$}, whose objects are elements of $X$ and whose morphisms $x \to y$ correspond to ordered pairs $x \le y$.

  Dually, a \emph{sheaf of sets} $\shf{S}$ on a partial order $(X,\le)$ consists of the following specification:
  \begin{itemize}
  \item For each $x \in X$, a set $\shf{S}(x)$, called the \emph{stalk at $x$}, and
  \item For each $x \le y \in X$, a function $\left(\shf{S}(x \le y)\right) : \shf{S}(x) \to \shf{S}(y)$, called the \emph{restriction along $x \le y$}, such that
  \item Whenever $x \le y \le z \in X$, $\shf{S}(x \le z) = \left(\shf{S}(y \le z)\right) \circ \left(\shf{S}(x \le y)\right)$.
  \end{itemize}
  In this way, a sheaf is covariant functor to ${\bf Set}$ from the category generated by the partially ordered set $(X,\le)$.

  Given that every abstract simplicial complex $X$ corresponds to a partially ordered set $(X,\subseteq)$ via the face partial order $Face : {\bf Asc} \to {\bf Pos}$, we will often use \emph{(co)sheaves on an abstract simplicial complex} as a shorthand for (co)sheaves on the face partial order of an abstract simplicial complex.
  \end{definition}

  This definition of a (co)sheaf is traditionally that of a \emph{pre}(co)sheaf on a topological space.  The connection is that a (co)sheaf of sets of a partially ordered set is a minimal specification for a (co)sheaf on the partial order with the Alexandrov topology, via the process of \emph{(co)sheafification} \cite{Curry}.  Definition \ref{df:cosheaf_sheaf} is unambiguous in the context of this article, since we only consider (co)sheaves on partially ordered sets.

  \begin{definition}
    \label{df:coshvrep}
  We can use the information in a relation $(X,Y,R)$ to define a cosheaf $\cshf{R}^0$ on the face partial order\footnote{To keep the notation from becoming burdensome, we will abuse notation by regarding the abstract simplicial complex $D(X,Y,R)$ as a partially ordered set $(D(X,Y,R),\subseteq)$ rather than carrying around the $Face$ functor.} of $D(X,Y,R)$ by
  \begin{description}
  \item[Costalks] Each costalk is given by $\cshf{R}^0(\sigma) = Y_\sigma$, and
  \item[Extensions] If $\sigma \subseteq \tau$ in $D(X,Y,R)$, then the extension $\cshf{R}^0(\sigma \subseteq \tau): \cshf{R}^0(\tau) \to \shf{R}^0(\sigma)$ is the inclusion $Y_\tau \subseteq Y_\sigma$.
  \end{description}

  In a dual way, we can also define a sheaf $\shf{R}^0$ by:
\begin{description}
\item[Stalks] Each stalk is given by $\shf{R}^0(\sigma) = \vspan Y_\sigma$, and
\item[Restrictions] If $\sigma \subseteq \tau$ in $D(X,Y,R)$, then the restriction $\shf{R}^0(\sigma \subseteq \tau): \shf{R}^0(\sigma) \to \shf{R}^0(\tau)$ is defined to be the projection induced by the inclusion $Y_\tau \subseteq Y_\sigma$.
\end{description}
Notice that the basis for each stalk of $\shf{R}^0$ is the corresponding costalk of $\cshf{R}^0$.
  \end{definition}
  
\begin{corollary}
  For the cosheaf $\cshf{R}^0$ or sheaf $\shf{R}^0$ constructed from a relation $R$ as above,
  \begin{equation*}
    t(\sigma) = \# \cshf{R}^0(\sigma) = \dim \shf{R}^0(\sigma),
  \end{equation*}
  and
  \begin{equation*}
    d(\sigma) = \# \cshf{R}^0(\sigma) - \# \bigcup_{\sigma \subsetneqq \tau} \cshf{R}^0(\tau) = \dim \shf{R}^0(\sigma) - \dim \vspan_{\sigma \subsetneqq \tau} \shf{R}^0(\tau).
  \end{equation*}
\end{corollary}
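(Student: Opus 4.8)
The plan is to treat the two weight identities separately, handling the total weight first since it is essentially a restatement of definitions, and then reducing the differential-weight identity to a single set-theoretic equality.

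For the total weight, I would simply unwind the definitions. By construction $\cshf{R}^0(\sigma) = Y_\sigma$, and the text preceding the statement records that $t(\sigma) = \# Y_\sigma$, so $t(\sigma) = \# \cshf{R}^0(\sigma)$ is immediate. For the sheaf, $\shf{R}^0(\sigma) = \vspan Y_\sigma$ and the elements of $Y_\sigma$ form a basis (as noted in Definition \ref{df:coshvrep}), so $\dim \shf{R}^0(\sigma) = \# Y_\sigma = t(\sigma)$. This disposes of the first display.

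The substantive work is the differential weight. The key step I would carry out is to prove the set equality
\begin{equation*}
  \{y \in Y : \left((x,y)\in R\text{ if } x\in \sigma\right) \text{ and } \left((x,y)\notin R\text{ if } x\notin \sigma\right) \} = Y_\sigma \setminus \bigcup_{\sigma \subsetneqq \tau} Y_\tau.
\end{equation*}
The forward inclusion is routine: if $y$ is related to exactly the vertices of $\sigma$, then $y \in Y_\sigma$, while any $\tau \supsetneq \sigma$ contains a vertex $x_0\notin\sigma$ with $(x_0,y)\notin R$, so $y\notin Y_\tau$. For the reverse inclusion I would argue by contradiction: given $y\in Y_\sigma$ with $(x_0,y)\in R$ for some vertex $x_0\notin\sigma$, the set $\tau = \sigma\cup\{x_0\}$ is itself a simplex of $D(X,Y,R)$ -- witnessed by that same $y$ -- and $y\in Y_\tau$ with $\sigma\subsetneqq\tau$, contradicting membership in the right-hand side.

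With the set equality in hand, the two formulas for $d(\sigma)$ follow by bookkeeping. Lemma \ref{lem:y_inclusion} guarantees $Y_\tau\subseteq Y_\sigma$ for every $\tau\supseteq\sigma$, so the union $\bigcup_{\sigma\subsetneqq\tau}Y_\tau$ sits inside $Y_\sigma$ and the cardinality of the set difference splits as $\# Y_\sigma - \#\bigcup_{\sigma\subsetneqq\tau}Y_\tau$, which is the cosheaf formula. For the sheaf, each $\shf{R}^0(\tau)=\vspan Y_\tau$ is the subspace of $\shf{R}^0(\sigma)$ spanned by the corresponding basis vectors, so $\vspan_{\sigma\subsetneqq\tau}\shf{R}^0(\tau) = \vspan\bigcup_{\sigma\subsetneqq\tau}Y_\tau$ has dimension $\#\bigcup_{\sigma\subsetneqq\tau}Y_\tau$; subtracting from $\dim\shf{R}^0(\sigma)=\#Y_\sigma$ gives the same value. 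The main obstacle is precisely the reverse inclusion above -- verifying that $\sigma\cup\{x_0\}$ genuinely belongs to the Dowker complex rather than being an arbitrary enlargement of $\sigma$, which is exactly where the existence of a common witness $y$ is used; everything else is definition-chasing together with the elementary fact that linearly independent basis vectors contribute their count to the dimension of their span.
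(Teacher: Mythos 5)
Your proposal is correct, and it supplies exactly the argument the paper leaves implicit: the paper states this corollary without proof as a direct consequence of Definition \ref{df:coshvrep}, and your set identity $\{y \in Y : (x,y)\in R \iff x \in \sigma\} = Y_\sigma \setminus \bigcup_{\sigma \subsetneqq \tau} Y_\tau$ (with the witness $y$ certifying that $\sigma \cup \{x_0\}$ is a genuine simplex) is precisely the key fact needed, matching the paper's informal remark that the differential weight counts columns realized as indicator functions of $\sigma$. The cardinality and dimension bookkeeping via Lemma \ref{lem:y_inclusion} is likewise the intended route, so there is nothing to add.
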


The interpretation is that the total weight $t$ computes how many columns of $r$ start at $\sigma$, while the differential weight $d$ counts columns of $r$ that are related to the elements of $\sigma$ and no others.

  The main use of (co)sheaf theory is to formalize the notion of local and global consistency over some space, by way of identifying the data that are consistent with respect to the (co)sheaf.  These data are captured within \emph{global (co)sections}.
  
\begin{definition}
  For a cosheaf $\cshf{C}$ on a partially ordered set $(X,\le)$, consider the disjoint union of all costalks
  \begin{equation*}
    \bigsqcup_{x\in X} \cshf{C}(x).
  \end{equation*}
  Let $\sim$ be the equivalence relation on this disjoint union generated by $c_x \sim c_y$ whenever there exists an $x \le y$ in $X$ that satisfies
  \begin{enumerate}
  \item $c_x \in \cshf{C}(x)$ and
  \item $c_y \in \cshf{C}(y)$, such that
  \item $c_x = \left(\cshf{C}(x \le y)\right)(c_y)$.
  \end{enumerate}

  The \emph{set of global cosections} $\cshf{C}(X)$ of the cosheaf $\cshf{C}$ is given by the set of equivalence classes 
  \begin{equation*}
    \cshf{C}(X) = \left(\bigsqcup_{x\in X} \cshf{C}(x)\right) / \sim.
  \end{equation*}
  Each element of $\cshf{C}(X)$ is called a \emph{(global) cosection} of $\cshf{C}$.

  Dually, the \emph{set of global sections} of a sheaf $\shf{S}$ a on partially ordered set $(X,\le)$ is denoted by $\shf{S}(X)$ and is given by the subset
  \begin{equation*}
    \shf{S}(X) = \left\{s \in \prod_{x\in X} \shf{S}(x) : s_y = \left(\shf{S}(x \le y)\right)(s_x) \right\}.
  \end{equation*}
  Each element of $\shf{S}(X)$ is called a \emph{(global) section} of $\shf{S}$.
\end{definition}

\begin{example}
  \label{eg:eg2_cosheaf0}
  Recall the relation $R_2$ between $X=\{a,b,c,d\}$ and $Y=\{1,2,3,4,5,6\}$ from Example \ref{eg:eg2_dowker}, which was given by the matrix
    \begin{equation*}
    r_2 = \begin{pmatrix}
      1&0&1&0&0&1\\
      1&1&0&0&0&0\\
      0&1&1&1&0&1\\
      0&0&1&0&1&0\\
      \end{pmatrix}.
  \end{equation*}
  Using the partial order constructed for this relation in Example \ref{eg:eg5_dowker}, the cosheaf $\cshf{R}^0$ for the relation has the diagram
    \begin{equation*}
    \xymatrix{
                           &                      &  & *{\cshf{R}^0([a,c,d]])= \atop \{3\}} \ar[dr] \ar[d]\ar[dl]  & \\
      *{\cshf{R}^0([a,b])=\atop\{1\}} \ar[d] \ar[dr] & *{\cshf{R}^0([b,c])=\atop\{2\}} \ar[dl]\ar[dr] & *{\cshf{R}^0([a,c])=\atop\{3,6\}} \ar[dl] \ar[d]             & *{\cshf{R}^0([a,d])=\atop\{3\}} \ar[dll]\ar[d]     & *{\cshf{R}^0([c,d])=\atop\{3\}} \ar[dl]\ar[dll] \\
      *{\cshf{R}^0([b])=\atop\{1,2\}} & *{\cshf{R}^0([a])=\atop\{1,3,6\}} & *{\cshf{R}^0([c])=\atop\{2,3,4,6\}} & *{\cshf{R}^0([d])=\atop\{3,5\}}  &  \\
      }
    \end{equation*}
    where the numbers specify elements of $Y$ (also column indices of $r_2$).  The set of global cosections of this cosheaf is precisely
    \begin{equation*}
      \cshf{R}^0(X) = \{1,2,3,4,5,6\},
    \end{equation*}
    since the extension maps are all inclusions.  The equivalence classes involved merely identify equal elements of $Y$ (column indices) in the the disjoint union.
    Each global cosection of $\cshf{R}^0$ therefore corresponds to an element of $Y$ (equivalently, a column of $r_2$).

    The costalk cardinalities in the above diagram agree exactly with the total weight $t$ shown in Figure \ref{fig:eg2_dowker}.  Furthermore, the nonzero differential weights are
    \begin{eqnarray*}
      d([a,c,d]) &=& \# \cshf{R}^0([a,c,d]) = 1,\\
      d([a,b]) &=& \# \cshf{R}^0([a,b]) = 1,\\
      d([a,c]) &=& \#\cshf{R}^0([a,c]) - \#\cshf{R}^0([a,c,d]) = 2 - 1 = 1,\\
      d([c]) &=& \#\cshf{R}^0([c]) - \# \left(\cshf{R}^0([a,c,d]) \cup \cshf{R}^0([a,c]) \cup\cshf{R}^0([b,c]) \cup \cshf{R}^0([c,d]) \right) \\
      &=& 4 - \#\{2, 3,6\} = 4 - 3 = 1, \text{ and } \\
      d([d]) &=& \#\cshf{R}^0([c]) - \# \left(\cshf{R}^0([a,c,d]) \cup \cshf{R}^0([a,d]) \cup\cshf{R}^0([c,d]) \right) \\
      &=& 2 - \#\{ 3 \} = 2 - 1 = 1.
    \end{eqnarray*}
    
    By contrast, the sheaf is given by the diagram
    \begin{center}
      \includegraphics[height=2in]{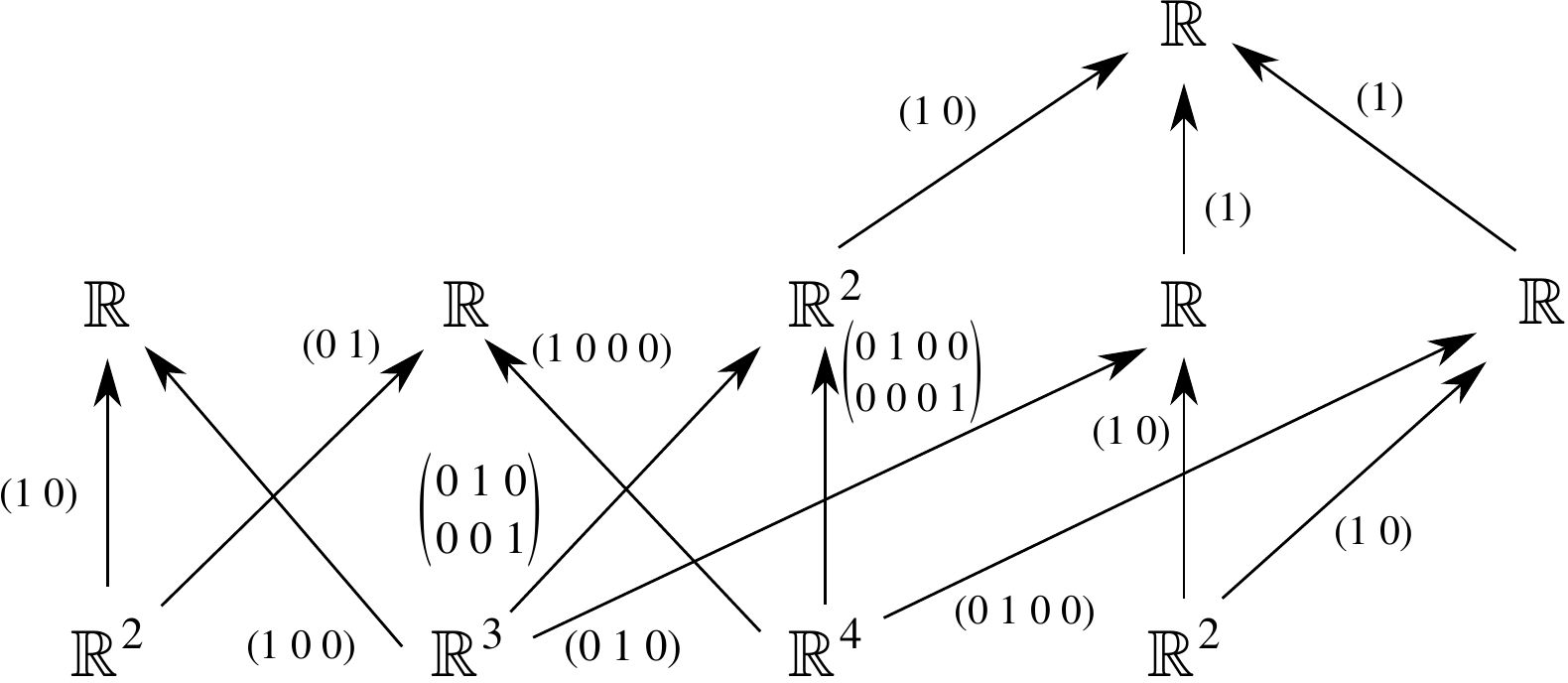}
    \end{center}
    We can interpret each global section of this sheaf as a formal linear combination of elements of $Y$, or a formal linear combination of columns of $r_2$.
\end{example}

To render cosheaves and sheaves into their own categories ${\bf CoShv}$ and ${\bf Shv}$, respectively, we need to define \emph{morphisms}.  Typical definitions (for instance, \cite[Def. 4.1.10]{Curry}) require the construction of morphisms between cosheaves or sheaves on the same partial order, but it is important to be a bit more general in our situation.  

\begin{definition} (\cite{Robinson_TSP_book} or \cite[Sec. I.4]{Bredon})
  \label{df:morphism}
  Suppose that $\cshf{R}$ is a cosheaf on a partially ordered set $(X,\le_X)$ and that $\cshf{S}$ is a cosheaf on a partially ordered set $(Y,\le_Y)$.  A \emph{cosheaf morphism} $m: \cshf{R} \to \cshf{S}$ along an order preserving \emph{base map}
  \begin{equation*}
    f: (X,\le_X) \to (Y,\le_Y)
  \end{equation*}
  consists of a set of \emph{component functions} $m_x: \cshf{R}(x) \to \cshf{S}(f(x))$ for each $x \in X$ such that the following diagram commutes
  \begin{equation*}
    \xymatrix{
      \cshf{R}(y) \ar[d]_{\cshf{R}(x \le_X y)} \ar[r]^-{m_y}& \cshf{S}(f(y))\ar[d]^{\cshf{S}(f(x) \le_Y f(y))}\\
      \cshf{R}(x) \ar[r]_-{m_x}& \cshf{S}(f(x))  \\
    }
  \end{equation*}
  
  Dually, suppose that $\shf{R}$ is a sheaf on a partially ordered set $(X,\le_X)$ and that $\shf{S}$ is a sheaf on a partially ordered set $(Y,\le_Y)$. 
  A \emph{sheaf morphism} $m: \shf{S}\to \shf{R}$ along an order preserving \emph{base map} $f: (X,\le_X) \to (Y,\le_Y)$ (careful: $m$ and $f$ \emph{go in opposite directions!}) consists of a set of \emph{component functions} $m_x: \shf{S}(f(x)) \to \shf{R}(x)$ for each $x \in X$ such that the following diagram commutes
  \begin{equation*}
    \xymatrix{
      \shf{S}(f(x)) \ar[r]^-{m_x} \ar[d]_{\shf{S}(f(x) \le_Y f(y))}&\shf{R}(x) \ar[d]^{\shf{R}(x \le_X y)}\\
      \shf{S}(f(y)) \ar[r]_-{m_y} &\shf{R}(y)\\
      }
  \end{equation*}
  for each $x \le_X y$.

  The \emph{category of cosheaves} ${\bf CoShv}$ (or \emph{category of sheaves} ${\bf Shv}$) consists of all cosheaves (or sheaves) on partially ordered sets as the class of objects, with cosheaf morphisms (or sheaf morphisms) as the class of morphisms.  Composition of morphisms in both cases is accomplished by simply composing the base map and component functions.
\end{definition}

\begin{lemma}
  \label{cor:base_functor}
  The transformation of a cosheaf to its underlying partial order is a covariant functor $Base : {\bf CoShv} \to {\bf Pos}$.  Likewise, the transformation of a sheaf to its underlying partial order is a contravariant functor $Base'$.
\end{lemma}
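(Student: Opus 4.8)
The plan is to verify the three functor axioms directly from Definition \ref{df:morphism}: specify the action on objects, the action on morphisms, and confirm preservation of identities and of composition. The only genuinely delicate point is tracking the variance, so I would treat the covariant statement for $Base$ first and then read off the contravariant statement for $Base'$ by reversing arrows.

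For $Base$, I would set the action on objects to send a cosheaf $\cshf{R}$ on $(X,\le_X)$ to the object $(X,\le_X)$ of ${\bf Pos}$; this is well-defined since Definition \ref{df:cosheaf_sheaf} equips each cosheaf with exactly one underlying partial order. On morphisms, a cosheaf morphism $m:\cshf{R}\to\cshf{S}$ is, by Definition \ref{df:morphism}, equipped with an order-preserving base map $f:(X,\le_X)\to(Y,\le_Y)$ between the underlying posets $Base(\cshf{R})$ and $Base(\cshf{S})$, and I would define $Base(m)=f$. This is a legitimate ${\bf Pos}$ morphism precisely because $f$ is order-preserving, and it points from $Base(\cshf{R})$ to $Base(\cshf{S})$ in the same direction as $m$, which is what covariance requires. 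Preservation of identities is immediate: the identity cosheaf morphism $\id_{\cshf{R}}$ has base map $\id_X$ (its component functions being the identities on each costalk, for which the square of Definition \ref{df:morphism} trivially commutes), so $Base(\id_{\cshf{R}})=\id_X=\id_{Base(\cshf{R})}$. For composition, given $m:\cshf{R}\to\cshf{S}$ along $f$ and $m':\cshf{S}\to\cshf{T}$ along $f'$, the definition forms $m'\circ m$ by composing base maps, so its base map is $f'\circ f$; hence $Base(m'\circ m)=f'\circ f=Base(m')\circ Base(m)$. This completes the verification that $Base$ is a covariant functor.

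For $Base'$ on sheaves the bookkeeping is identical, but the variance flips because of the convention flagged in Definition \ref{df:morphism} that a sheaf morphism and its base map run in \emph{opposite} directions. Concretely, a sheaf morphism $m:\shf{S}\to\shf{R}$ with $\shf{R}$ on $(X,\le_X)$ and $\shf{S}$ on $(Y,\le_Y)$ carries a base map $f:(X,\le_X)\to(Y,\le_Y)$, and setting $Base'(\shf{R})=(X,\le_X)$ on objects and $Base'(m)=f$ on morphisms assigns to the arrow $m:\shf{S}\to\shf{R}$ a ${\bf Pos}$ arrow $f:Base'(\shf{R})\to Base'(\shf{S})$ running from the image of the target to the image of the source. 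Since identities are preserved exactly as above and composing two sheaf morphisms composes their base maps, one obtains the reversed composition law expected of a contravariant functor, namely that $Base'$ applied to a composite equals the composite of the images taken in the opposite order. The main obstacle is thus entirely this direction-tracking for $Base'$; with the opposite-direction convention of Definition \ref{df:morphism} made explicit, contravariance falls out and every remaining check is routine.
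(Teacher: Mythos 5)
Your proof is correct and takes the same route as the paper, whose entire argument is the one-line remark that both statements ``follow immediately from the definition''; your write-up simply makes explicit the definitional unwinding (base maps as the morphism images, identity and composition checks, and the direction reversal for sheaves) that the paper leaves implicit. In particular, your handling of the variance for $Base'$ correctly exploits the opposite-direction convention of Definition \ref{df:morphism}, which is exactly the point the paper's terse proof relies on.
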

\begin{proof}
  Both of these statements follow immediately from the definition.
\end{proof}

\begin{lemma}
  \label{lem:morphisms_cosections}
  The transformation of cosheaves to global cosections is a covariant functor $\Gamma: {\bf CoShv} \to {\bf Set}$.  Specifically, every cosheaf morphism $p: \cshf{R} \to \cshf{S}$ induces a function on global cosections.  
\end{lemma}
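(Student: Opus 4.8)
The plan is to define $\Gamma$ on objects by sending a cosheaf $\cshf{R}$ on $(X,\le_X)$ to its set of global cosections $\cshf{R}(X)$, and to build the action on morphisms by first lifting a cosheaf morphism to the disjoint unions of costalks and then checking that the lift descends to the quotients. First I would fix a cosheaf morphism $p : \cshf{R} \to \cshf{S}$ along an order-preserving base map $f : (X,\le_X) \to (Y,\le_Y)$, with component functions $p_x : \cshf{R}(x) \to \cshf{S}(f(x))$. On the level of disjoint unions, define
\begin{equation*}
  \hat{p} : \bigsqcup_{x \in X}\cshf{R}(x) \to \bigsqcup_{y\in Y}\cshf{S}(y), \qquad c_x \mapsto p_x(c_x) \in \cshf{S}(f(x)).
\end{equation*}
The goal is then to show that $\hat{p}$ descends to a well-defined function $\Gamma(p) : \cshf{R}(X) \to \cshf{S}(Y)$ given on equivalence classes by $\Gamma(p)([c_x]) = [p_x(c_x)]$.

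The crux of the argument, and the step I expect to be the main obstacle, is well-definedness: I must show that $\hat{p}$ carries $\sim$-related elements of the domain to $\sim$-related elements of the codomain. Because the equivalence relation on each disjoint union is \emph{generated} by the elementary identifications, it suffices to verify this on a single generator. So suppose $x \le_X x'$ and $c_x = \left(\cshf{R}(x \le_X x')\right)(c_{x'})$, so that $c_x \sim c_{x'}$. Applying $p_x$ and invoking the commuting square from Definition \ref{df:morphism}, I obtain
\[
  p_x(c_x) = p_x\!\left(\left(\cshf{R}(x \le_X x')\right)(c_{x'})\right) = \left(\cshf{S}(f(x) \le_Y f(x'))\right)\left(p_{x'}(c_{x'})\right).
\]
Since $f$ is order preserving we have $f(x) \le_Y f(x')$, and the displayed equality is precisely the elementary identification making $p_x(c_x) \sim p_{x'}(c_{x'})$ in $\cshf{S}(Y)$. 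Thus $\hat{p}$ respects generators; as any function automatically respects the reflexive, symmetric, and transitive closure of a relation on its images, it respects the full generated equivalence relation, and $\Gamma(p)$ is well defined. The essential ingredients here are exactly the commuting square of a cosheaf morphism and the order-preservation of the base map.

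It then remains to confirm the two functor axioms. The identity morphism on $\cshf{R}$ has the identity base map and identity component functions, so $\Gamma$ applied to it sends $[c_x]$ to $[c_x]$ and hence is the identity on $\cshf{R}(X)$. For composition, given $p : \cshf{R}\to\cshf{S}$ along $f$ and $q : \cshf{S}\to\cshf{T}$ along $h$, the composite morphism $q \circ p$ has base map $h\circ f$ and component functions $q_{f(x)}\circ p_x$ by Definition \ref{df:morphism}; applying the formula gives
\begin{equation*}
  \Gamma(q\circ p)([c_x]) = \left[\, q_{f(x)}\!\left(p_x(c_x)\right)\right] = \Gamma(q)\!\left(\Gamma(p)([c_x])\right),
\end{equation*}
so $\Gamma$ preserves composition. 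Everything beyond the descent to quotients is thus routine bookkeeping.
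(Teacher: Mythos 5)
Your proposal is correct and follows essentially the same route as the paper's proof: the induced map is defined by applying a component function to a representative of a cosection, well-definedness is extracted from the commuting square in Definition \ref{df:morphism} together with order preservation of the base map, and functoriality is checked componentwise. One difference is worth noting, and it is in your favor: since the equivalence relation on $\bigsqcup_{x\in X}\cshf{R}(x)$ is only \emph{generated} by the elementary identifications, your reduction to generators (a function respecting each generating identification automatically respects the full reflexive--symmetric--transitive closure, because the preimage under $\hat{p}$ of the codomain's equivalence relation is itself an equivalence relation containing the generators) is the logically complete argument; the paper's proof only treats the case where two representatives $c_x$ and $c_{x'}$ lie over comparable elements $x \le x'$ or $x' \le x$, and does not address chains of identifications passing through intermediate costalks. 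You also index the components of the composite correctly as $q_{f(x)} \circ p_x$, where the paper's composition computation writes $q_x \circ p_x$, and your explicit verification that identity morphisms go to identity functions is a trivial but complete addition that the paper omits.
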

\begin{proof}
  Suppose that $p: \cshf{R} \to \cshf{S}$ and $q: \cshf{S} \to \cshf{T}$ are cosheaf morphisms along order preserving base maps $f: (X,\le_X) \to (Y,\le_Y)$ and $g:(Y,\le_Y) \to (Z,\le_Z)$.  Let us use these data to define a function $p_* : \cshf{R}(X) \to \cshf{S}(Y)$ (and a function $q_* : \cshf{S}(Y) \to \cshf{T}(Z)$ by the same construction) such that $(q_* \circ p_*) = (q \circ p)_*$.  To that end, consider a cosection $c$ of $\cshf{R}$.  This is an element of
  \begin{equation*}
    \left(\bigsqcup_{x\in X} \cshf{R}(x)\right) / \sim.
  \end{equation*}
  Because of this, $c_x \in \cshf{R}(x)$ for some $x\in X$.  Define
  \begin{equation*}
    p_*(c) = p_x(c_x).
  \end{equation*}
  To verify that this is well-defined, suppose that $c_{x'} \in \cshf{R}(x')$ for some other $x' \in X$.  Under the equivalence relation $\sim$, the only way $c_{x'} \sim c_x$ can happen is if $x \le x'$ or $x' \le x$.  But since $c$ is a cosection, it happens that if $x' \le x$,
  \begin{equation*}
    \left(\cshf{R}(x' \le x)\right)(c_x) = c_{x'}.
  \end{equation*}
  Since $p$ is a cosheaf morphism, this means that
  \begin{eqnarray*}
    p_{x'}(c_{x'}) &=& \left(p_{x'} \circ \left(\cshf{R}(x' \le x)\right)\right)(c_x))\\
      &=& \left(\cshf{S}(f(x') \le f(x)) \circ p_x \right)(c_x),\\
      &=&  \left(\cshf{S}(f(x') \le f(x))\right) \left( p_x (c_x) \right),
  \end{eqnarray*}
  which implies that $p_{x'}(c_{x'}) \sim p_x(c_x)$ in $\cshf{S}(X)$.
  On the other hand, if $x \le x'$
  \begin{equation*}
    \left(\cshf{R}(x \le x')\right)(c_{x'}) = c_x.
  \end{equation*}
  Since $p$ is a cosheaf morphism, this means that
  \begin{eqnarray*}
    p_x(c_x) &=& \left(p_x \circ \left(\cshf{R}(x \le x')\right)\right)(c_{x'})\\
    &=& \left( \left(\cshf{S}(f(x) \le f(x'))\right) \circ p_{x'} \right)(c_{x'}),
  \end{eqnarray*}
  which also implies that $p_{x'}(c_{x'}) \sim p_x(c_x)$ in $\cshf{S}(X)$.  Thus, $p_*(c)$ is a well-defined global cosection of $\cshf{S}$.

  Repeating this construction with $q$, notice that
  \begin{eqnarray*}
    (q \circ p)_*(c) &=& (q \circ p)_x (c) \\
    &=&(q_x \circ p_x)(c)\\
    &=&(q_* \circ p_*)(c),
  \end{eqnarray*}
  which establishes covariance.
\end{proof}

\begin{theorem}
    \label{thm:cosheaf_r0_functor}
    The transformation $(X,Y,R) \mapsto \cshf{R}^0$ given in Definition \ref{df:coshvrep} is a covariant functor $CoShvRep^0: {\bf Rel} \to {\bf CoShv}$.  In particular, each ${\bf Rel}$ morphism induces a cosheaf morphism.  Furthermore, if the domain is restricted to ${\bf Rel_+}$, the functor becomes faithful.
\end{theorem}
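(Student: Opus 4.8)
The plan is to construct the functor in stages: specify its action on morphisms, check the two functor axioms, and finally analyze injectivity on hom-sets over ${\bf Rel_+}$. Since Definition \ref{df:coshvrep} already fixes the action on objects, the first job is to turn each ${\bf Rel}$ morphism $(f,g):(X_1,Y_1,R_1)\to(X_2,Y_2,R_2)$ into a cosheaf morphism in the sense of Definition \ref{df:morphism}. The natural base map is the simplicial map $D(f,g)$ from Theorem \ref{thm:dowker_functor}, which on the face poset acts as $\sigma\mapsto f(\sigma)$ and is order preserving by Lemma \ref{lem:face_poset_functor}. For the component functions I would take the restrictions $m_\sigma = g|_{(Y_1)_\sigma}:(Y_1)_\sigma\to (Y_2)_{f(\sigma)}$, which are well-defined precisely because Lemma \ref{lem:y_morphism} gives $g((Y_1)_\sigma)\subseteq (Y_2)_{f(\sigma)}$.

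The square demanded of a cosheaf morphism by Definition \ref{df:morphism} is then exactly the commuting square of Corollary \ref{cor:y_diagram}, with the vertical extensions being the inclusions of the cosheaves $\cshf{R}_1^0$ and $\cshf{R}_2^0$; so no fresh computation is required there. The functor axioms follow by unwinding composition in ${\bf CoShv}$: the identity pair $(\id_X,\id_Y)$ gives the identity base map and identity components, while for a composite $(f_2,g_2)\circ(f_1,g_1)=(f_2\circ f_1,\,g_2\circ g_1)$ the induced components compose as $g_2|\circ g_1| = (g_2\circ g_1)|$, agreeing with the morphism built directly from the composite. Covariance of the assignment of base maps is inherited from covariance of the Dowker functor.

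The substantive part is faithfulness over ${\bf Rel_+}$. I would fix two parallel ${\bf Rel_+}$ morphisms $(f,g)$ and $(f',g')$ that induce the same cosheaf morphism; this forces both the base maps to agree, $f(\sigma)=f'(\sigma)$ for every simplex $\sigma$, and the components to agree, $g|_{(Y_1)_\sigma}=g'|_{(Y_1)_\sigma}$ for every $\sigma$. This is where the ${\bf Rel_+}$ hypothesis is essential. Because every $x\in X_1$ is related to some element of $Y_1$, each singleton $[x]$ is a vertex of $D(X_1,Y_1,R_1)$; evaluating the base-map equality at these vertices yields $[f(x)]=[f'(x)]$, hence $f=f'$. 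Dually, because every $y\in Y_1$ is related to some $x\in X_1$, we have $y\in (Y_1)_{[x]}$, so the agreement of the components $m_{[x]}$ at $y$ gives $g(y)=g'(y)$, hence $g=g'$.

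The main obstacle is understanding why faithfulness genuinely requires ${\bf Rel_+}$ rather than ${\bf Rel}$: an $x\in X_1$ related to nothing contributes no vertex to the Dowker complex, and a $y\in Y_1$ related to nothing lies in no costalk $(Y_1)_\sigma$ of a nonempty $\sigma$, so in either case the cosheaf morphism simply cannot record the values of $f$ or $g$ on such elements. The crux is thus the bookkeeping claim that, under ${\bf Rel_+}$, the base-map data see every vertex and the component data see every element of $Y_1$; once that is pinned down, the equalities $f=f'$ and $g=g'$ fall out immediately, and everything else reduces to the already-established Lemma \ref{lem:y_morphism} and Corollary \ref{cor:y_diagram}.
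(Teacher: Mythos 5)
Your proposal is correct and follows essentially the same route as the paper's proof: the same base map $D(f)$, the same components $m_\sigma = g|_{(Y_1)_\sigma}$ justified by Lemma \ref{lem:y_morphism}, the commuting square supplied by Corollary \ref{cor:y_diagram}, and the identical faithfulness argument via vertices $[x]$ and elements $y \in (Y_1)_{[x]}$ under the ${\bf Rel_+}$ hypothesis. The only cosmetic differences are that you cite Lemma \ref{lem:face_poset_functor} where the paper invokes Propositions \ref{prop:posetrep_functor} and \ref{prop:posetrep_factor}, and you explicitly check the identity axiom, which the paper leaves implicit.
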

  \begin{proof}
    Suppose that $(f_1,g_1) : (X_1,Y_1,R_1) \to (X_2,Y_2,R_2)$ and $(f_2,g_2) : (X_2,Y_2,R_2) \to (X_3,Y_3,R_3)$ are two ${\bf Rel}$ morphisms.  Suppose that $\cshf{R}_1^0$ is the cosheaf associated to $(X_1,Y_1,R_1)$ according to the recipe given in Definition \ref{df:coshvrep}, and likewise $\cshf{R}_2^0$ and  $\cshf{R}_3^0$ are the cosheaves associated to $(X_2,Y_2,R_2)$ and $(X_3,Y_3,R_3)$, respectively.  We first show how to construct a cosheaf morphism $m_1: \cshf{R}_1^0 \to \cshf{R}_2^0$.  

    Recognizing that the cosheaves $\cshf{R}_1^0$ and $\cshf{R}_2^0$ are written on the simplices of $D(X_1,Y_1,R_1)$ and $D(X_2,Y_2,R_2)$, recall that Theorem \ref{thm:dowker_functor} implies that $D(f_1)$ is a simplicial map $D(X_1,Y_1,R_1)\to D(X_2,Y_2,R_2)$, and that Propositions \ref{prop:posetrep_functor} and \ref{prop:posetrep_factor} imply that this can be interpreted as an order preserving function.  This is the order preserving base map along which $m_1$ is defined.

    Suppose that $\sigma \subseteq \tau$ in $D(X_1,Y_1,R_1)$.  As far as vertices are concerned, the diagram
    \begin{equation*}
      \xymatrix{
        \sigma \ar[d]_{\subseteq} \ar[r]^-{f} & f(\sigma) \ar[d]^{\subseteq}\\
        \tau \ar[r]_-{f} & f(\tau) \\
        }
    \end{equation*}
    commutes.  Corollary \ref{cor:y_diagram} therefore states that
  \begin{equation*}
    \xymatrix{
      \left(Y_1\right)_\tau \ar[r]^-{g} \ar[d]_{\subseteq} & \left(Y_2\right)_{f(\tau)}\ar[d]^{\subseteq} \\
      \left(Y_1\right)_\sigma \ar[r]_-{g} & \left(Y_2\right)_{f(\sigma)}\\
      }
  \end{equation*}
  commutes.  We therefore merely need to realize that according to Definition \ref{df:coshvrep}, this is equal to the diagram
  \begin{equation*}
    \xymatrix{
      \cshf{R}_1^0(\tau) \ar[r]^-{g} \ar[d]_{\cshf{R}_1^0(\sigma \subseteq \tau)} & \cshf{R}_2^0\left(f(\tau)\right)\ar[d]^{\cshf{R}_2^0(f(\sigma) \subseteq f(\tau))} \\
      \cshf{R}_1^0(\sigma) \ar[r]_-{g} & \cshf{R}_2^0\left(f(\sigma)\right)\\
      }
  \end{equation*}
  which establishes that $m_1$ is a cosheaf morphism, with $m_\sigma = g|_{(Y_1)_\sigma}$ as components.  Given that $m_2 : \cshf{R}_2^0 \to \cshf{R}_3^0$ can be constructed in the same way, the composition $(f_2,g_2)\circ(f_1,g_1)$ of ${\bf Rel}$ morphisms induces the composition of component functions, which is precisely the composition $m_2 \circ m_1$ of cosheaf morphisms.

  To show that this functor is faithful when restricted to objects in ${\bf Rel_+}$, a rather direct argument suffices.  Suppose that $(f,g)$ and $(f',g')$ are two ${\bf Rel}$ morphisms $(X_1,Y_1,R_1) \to (X_2,Y_2,R_2)$ in which $(X_1,Y_1,R_1)$ is an object of ${\bf Rel_+}$.  Recall that the latter constraint means that for every $x \in X_1$, there is a $y \in Y_1$ such that $(x,y) \in R_1$, and conversely for every $y \in Y_1$, there is an $x \in X_1$ such that $(x,y) \in R_1$.  To establish faithfulness, let us suppose additionally that $(f,g)$ and $(f',g')$ induce the same cosheaf morphism $m:\cshf{R}_1^0 \to \cshf{R}_2^0$.

  Let $y \in Y_1$ be given.  By assumption, there is an $x \in X_1$ such that $(x,y) \in R_1$, so there is also a simplex $\sigma$ (usually several simplices, actually) for which $y \in Y_\sigma$.  But, since both $(f,g)$ and $(f',g')$ both induce the same cosheaf morphism $m$, this means that
  \begin{equation*}
    g(y) = g|_{(Y_1)_\sigma}(y) = m_\sigma(y) = g'|_{(Y_1)_\sigma}(y) = g'(y).
  \end{equation*}
  Hence $g=g'$.

  Now let $x \in X_1$ be given.  By assumption, there is a $y \in Y_1$ such that $(x,y) \in R_1$, so this means that $[x]$ is a simplex of $D(X_1,Y_1,R_1)$.  Since both $(f,g)$ and $(f',g')$ induce the same cosheaf morphism $m$, this means that both $(f,g)$ and $(f',g')$ induce the same order preserving map on simplices of $D(X_1,Y_1,R_1) \to D(X_2,Y_2,R_2)$.  When restricted to vertices, this map is simply $f$ or $f'$, respectively, so they must also be equal.
  \end{proof}

  \begin{figure}
  \begin{center}
    \includegraphics[height=2.5in]{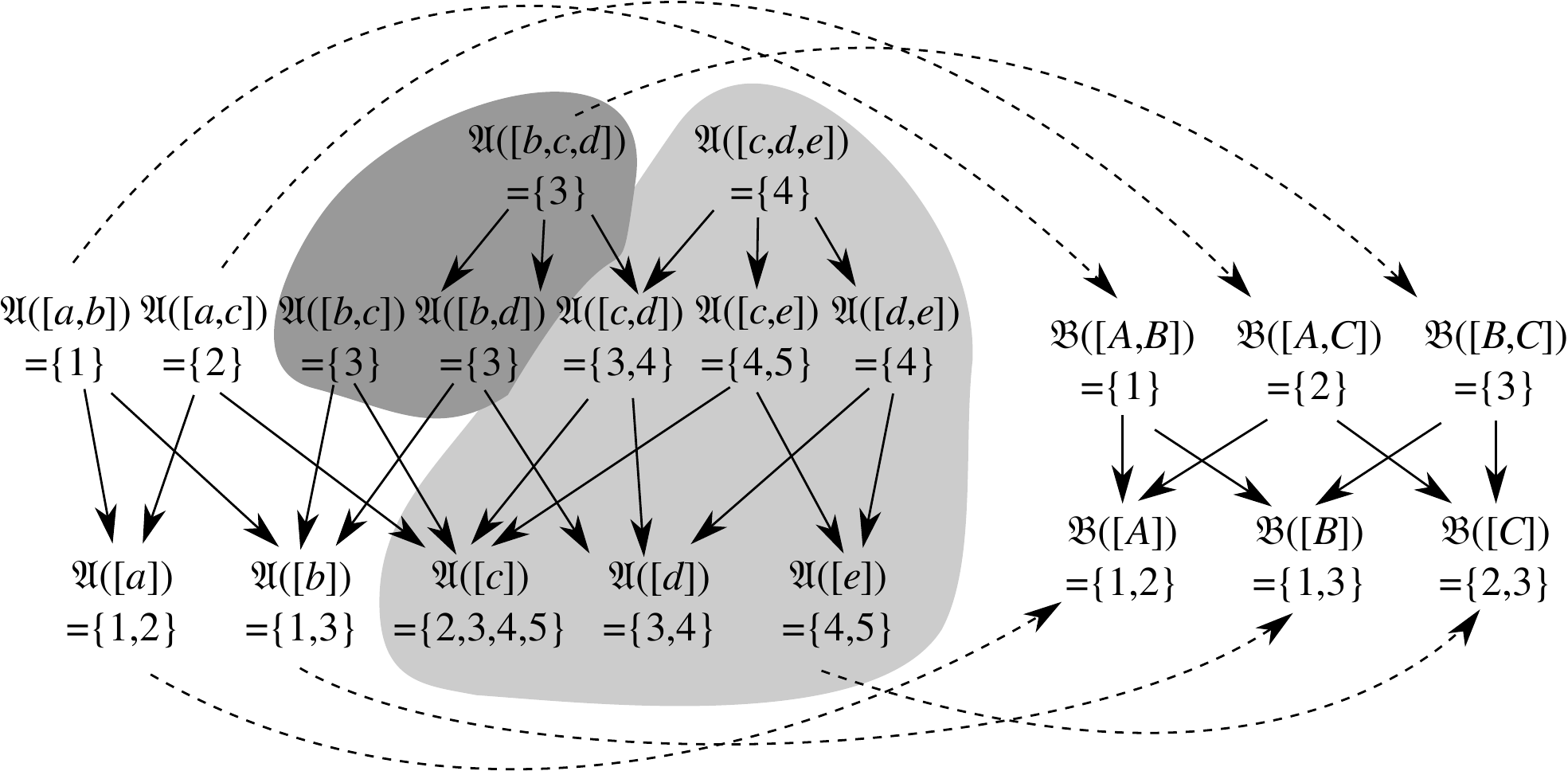}
    \caption{The cosheaf morphism induced by the ${\bf Rel}$ morphism $(f,g)$ in Example \ref{eg:rel_cosheaf_morph} via the $CoShvRep^0$ functor.}
    \label{fig:dowker_cosheaf_map}
  \end{center}
  \end{figure}
  
  \begin{example}
    \label{eg:rel_cosheaf_morph}
Consider again the relation $R_1$ between the sets $X_1=\{a,b,c,d,e\}$ and $Y_1=\{1,2,3,4,5\}$, given by the matrix
  \begin{equation*}
    r_1 = \begin{pmatrix}
      1&1&0&0&0\\
      1&0&1&0&0\\
      0&1&1&1&1\\
      0&0&1&1&0\\
      0&0&0&1&1\\
      \end{pmatrix},
  \end{equation*}
  from Example \ref{eg:rel_morph}.  However, this time let us consider a different morphism.  Define the relation $R_4$ between $X_4=\{A,B,C\}$ and $Y_4 =\{1,2,3\}$ given by
  \begin{equation*}
    r_4 = \begin{pmatrix}
      1&1&0\\
      1&0&1\\
      0&1&1\\
    \end{pmatrix}.
  \end{equation*}
  The function $f: X_1 \to X_4$ given by
  \begin{equation*}
    f(a) = A,\;     f(b) = B,\;     f(c) = C,\;     f(d) = C,\;     f(e) = C,
  \end{equation*}
  and the function $g: Y_1 \to Y_2$ given by
  \begin{equation*}
    g(1) = 1,\;    g(2) = 2,\;    g(3) = 3,\;    g(4) = 3,\;    g(5) = 3
  \end{equation*}  
  together define a ${\bf Rel}$ morphism $(X_1,Y_1,R_1) \to (X_4,Y_4,R_4)$.
  
  This relation morphism clearly maps each column of $r_1$ to a column of $r_4$, so it also acts on the costalks of the cosheaf representations.  If we define $\cshf{A} = CoShvRep^0(X_1,Y_1,R_1)$ and $\cshf{B} = CoShvRep^0(X_4,Y_4,R_4)$, the resulting cosheaf morphism $\cshf{A} \to \cshf{B}$ is given by the diagram shown in Figure \ref{fig:dowker_cosheaf_map}.  Notice that each dashed arrow represents a component map of the cosheaf morphism, and is given by restricting the domain of $g$ to each costalk, since this is how the columns are transformed. 
  \end{example}

\begin{theorem}
  \label{thm:sheaf_r0_functor}
  The transformation $R \mapsto \shf{R}^0$ given in Definition \ref{df:coshvrep} is a contravariant functor $ShvRep^0:{\bf Rel} \to {\bf Shv}$.  When restricted to ${\bf Rel_+} \to {\bf Shv}$, the functor becomes faithful.
\end{theorem}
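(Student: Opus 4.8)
The plan is to dualize the proof of Theorem~\ref{thm:cosheaf_r0_functor} by replacing each set map with the transpose of its linearization. Because transposition reverses the direction of a linear map, this is exactly the operation that turns the covariant cosheaf functor into a \emph{contra}variant sheaf functor, matching the variance claimed here.

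First I would fix a ${\bf Rel}$ morphism $(f,g):(X_1,Y_1,R_1)\to(X_2,Y_2,R_2)$ and build a sheaf morphism $ShvRep^0(f,g):\shf{R}_2^0 \to \shf{R}_1^0$ along the base map $D(f)$ supplied by Theorem~\ref{thm:dowker_functor} (reinterpreted as an order preserving map via Propositions~\ref{prop:posetrep_functor} and~\ref{prop:posetrep_factor}). For each simplex $\sigma$ of $D(X_1,Y_1,R_1)$, Lemma~\ref{lem:y_morphism} gives $g\bigl((Y_1)_\sigma\bigr)\subseteq (Y_2)_{f(\sigma)}$, so $g$ restricts to a set map whose linear extension $\vspan (Y_1)_\sigma \to \vspan (Y_2)_{f(\sigma)}$ is precisely the component of the cosheaf morphism from Theorem~\ref{thm:cosheaf_r0_functor}. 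I would then define the component $m_\sigma:\shf{R}_2^0(f(\sigma)) \to \shf{R}_1^0(\sigma)$, that is $\vspan (Y_2)_{f(\sigma)} \to \vspan (Y_1)_\sigma$, to be the transpose of this linear extension with respect to the distinguished bases $(Y_1)_\sigma$ and $(Y_2)_{f(\sigma)}$; concretely $m_\sigma$ sends a basis element $z$ to the sum of those $y \in (Y_1)_\sigma$ with $g(y)=z$. This is the same operation that, in Definition~\ref{df:coshvrep}, turns the cosheaf extensions (the inclusions $Y_\tau\subseteq Y_\sigma$) into the sheaf restrictions (the induced projections).

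Next I would verify the commuting square required by Definition~\ref{df:morphism}. Linearizing the square of Corollary~\ref{cor:y_diagram} yields a commuting square of forward linear maps whose vertical arrows are the inclusions of the form $\vspan Y_\tau\hookrightarrow \vspan Y_\sigma$ and whose horizontal arrows are the linearizations of $g$. Taking transposes reverses every arrow: the inclusions become the sheaf restrictions $\shf{R}_i^0(\cdot \subseteq \cdot)$ and the horizontal maps become the components $m_\bullet$, and the identity $(\beta\circ\alpha)^T=\alpha^T\circ\beta^T$ shows the transposed square commutes; this transposed square is exactly the defining diagram for a sheaf morphism. Contravariant functoriality then follows from the same reversal: for composable ${\bf Rel}$ morphisms $(f_1,g_1)$ and $(f_2,g_2)$, the component of the composite at $\sigma$ is the transpose of $(g_2\circ g_1)|_{(Y_1)_\sigma}$, and $((g_2\circ g_1)|_{(Y_1)_\sigma})^T = (g_1|_{(Y_1)_\sigma})^T\circ (g_2|_{(Y_2)_{f_1(\sigma)}})^T$ is precisely $ShvRep^0(f_1,g_1)\circ ShvRep^0(f_2,g_2)$, with the order reversed as contravariance demands.

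Finally, for faithfulness on ${\bf Rel_+}$ I would mimic the end of the proof of Theorem~\ref{thm:cosheaf_r0_functor}. Suppose $(f,g)$ and $(f',g')$ induce the same sheaf morphism. Equality of the morphisms forces equality of their base maps $D(f)=D(f')$; restricting to vertices gives $f=f'$, exactly as in the cosheaf case. For $g=g'$, equality of the components $m_\sigma=m'_\sigma$ together with the injectivity of transposition on linear maps yields $g|_{(Y_1)_\sigma}=g'|_{(Y_1)_\sigma}$ for every simplex $\sigma$; since $(X_1,Y_1,R_1)\in{\bf Rel_+}$, every $y\in Y_1$ lies in some $(Y_1)_{[x]}$, and evaluating there gives $g(y)=g'(y)$. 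The step I expect to be the main obstacle is the transpose bookkeeping: one must check that transposing the linearized square of Corollary~\ref{cor:y_diagram} genuinely produces the projections named in Definition~\ref{df:coshvrep} (rather than some other map) and that, in these fixed bases, transposition is a bijection $\mathrm{Hom}(V,W)\to\mathrm{Hom}(W,V)$, since both the commutativity and the faithfulness arguments hinge on it.
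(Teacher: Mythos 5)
Your proposal is correct and takes essentially the same approach as the paper: you build the sheaf morphism along the base map $D(f)$ with components given by the transpose, in the distinguished bases $(Y_1)_\sigma$ and $(Y_2)_{f(\sigma)}$, of the linearized restriction of $g$, you obtain the commuting square from Corollary \ref{cor:y_diagram}, and you prove faithfulness on ${\bf Rel_+}$ by mirroring the argument of Theorem \ref{thm:cosheaf_r0_functor}. The only difference is presentational: where you invoke the contravariance of transposition, $(\beta\circ\alpha)^T=\alpha^T\circ\beta^T$, the paper verifies the same identities by direct computation on basis elements (sums over $g$-preimages intersected with the relevant $Y_\bullet$, which is exactly your explicit formula for $m_\sigma$).
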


The proof of this Theorem starts out exactly dual to that of the proof of Theorem \ref{thm:cosheaf_r0_functor}, but then diverges due to differences in the algebraic structure of the stalks.  The argument from that point looks different, but is actually the same (modulo a transpose, which is the duality) when restricted to basis elements of the stalk.

\begin{proof}
  Suppose that $(f_1,g_1) : (X_1,Y_1,R_1) \to (X_2,Y_2,R_2)$ and $(f_2,g_2) : (X_2,Y_2,R_2) \to (X_3,Y_3,R_3)$ are two ${\bf Rel}$ morphisms.  Suppose that $\shf{R}_1^0$ is the sheaf associated to $(X_1,Y_1,R_1)$ according to the recipe from Definition \ref{df:coshvrep}, and likewise $\shf{R}_2^0$ and  $\shf{R}_3^0$ are the sheaves associated to $(X_2,Y_2,R_2)$ and $(X_3,Y_3,R_3)$, respectively.  We first show how to construct a sheaf morphism $m_1: \shf{R}_2^0 \to \shf{R}_1^0$.  Given that $m_2 : \shf{R}_3^0 \to \shf{R}_2^0$ can be constructed in the same way, we show that the composition $(f_2,g_2)\circ(f_1,g_1)$ of ${\bf Rel}$ morphisms induces the composition $m_1 \circ m_2$ of sheaf morphisms.

  Recognizing that the sheaves $\shf{R}_1^0$ and $\shf{R}_2^0$ are written on the simplices of $D(X_1,Y_1,R_1)$ and $D(X_2,Y_2,R_2)$, recall that Theorem \ref{thm:dowker_functor} implies that $D(f_1)$ is a simplicial map $D(X_1,Y_1,R_1)\to D(X_2,Y_2,R_2)$, and that Propositions \ref{prop:posetrep_functor} and \ref{prop:posetrep_factor} imply that this can be interpreted as an order preserving function.  This is the order preserving map along which $m_1$ is defined.

  The component maps go the other way, and are expansions of the preimage of $g$.  For a simplex $\sigma$ of $D(X_1,Y_1,R_1)$, the $m_{1,\sigma} : \shf{R}_2^0(f(\sigma)) \to \shf{R}_1^0(\sigma)$ is given by the formula
  \begin{equation*}
    m_{1,\sigma}\left(\sum_{z \in (Y_2)_{f(\sigma)}} a_z z\right) = \sum_{z \in (Y_2)_{f(\sigma)},} \sum_{y \in g^{-1}(z)} a_z y.
  \end{equation*}
  To show that this is indeed a sheaf morphism requires showing that it commutes with the restriction maps.  This follows from the diagram of Corollary \ref{cor:y_diagram}, since that diagram explains how the basis vectors transform; the sheaf uses the dual of each map.  To show this explicitly, it suffices to show this for a pair of simplices $\sigma \subseteq \tau$ in $D(X_1,Y_1,R_1)$ and for a basis element $z \in (Y_2)_{f(\sigma)}$, because we can extend by linearity,
  \begin{eqnarray*}
    \left(\shf{R}_1^0(\sigma \subseteq \tau) \circ m_{1,\sigma}\right)(z) &=& \left(\shf{R}_1^0(\sigma \subseteq \tau)\right)\left(\sum_{y \in g^{-1}(z)} y \right) \\
    &=& \sum_{y \in g^{-1}(z)} \left(\shf{R}_1^0(\sigma \subseteq \tau)\right)\left( y \right) \\
    &=& \sum_{y \in g^{-1}(z)\text{ and }y\in (Y_1)_\tau} y.
  \end{eqnarray*}
  According to Lemma \ref{lem:y_morphism}, $y\in (Y_1)_\tau$ implies that $z \in (Y_2)_{f(\tau)}$.  Thus we may continue the calculation along the other path
  \begin{eqnarray*}
    \left(m_{1,\tau} \circ \shf{R}_2^0(f(\sigma) \subseteq f(\tau))\right)(z) &=& m_{1,\tau}(z)\\
    &=& \sum_{y \in g^{-1}(z)\text{ and }y\in (Y_1)_\tau} y,
  \end{eqnarray*}
  establishing commutativity of the diagram

  As for composition $(f_2,g_2)\circ(f_1,g_1)$ of ${\bf Rel}$ morphisms, suppose that $\sigma$ is a simplex of $D(X_1,Y_1,R_1)$.  We compute
  for $z \in (Y_3)_{f_2(f_1(\sigma))}$:
  \begin{eqnarray*}
    \left(m_{1,\sigma} \circ m_{2,f_1(\sigma)}\right) (z) &=& m_{1,\sigma} \left(\sum_{y \in g_2^{-1}(z)} y\right)\\
    &=& \sum_{w \in g_1^{-1}(y),} \sum_{y \in g_2^{-1}(z)} w\\
    &=& \sum_{w \in (g_2\circ g_1)^{-1}(z)} w,
  \end{eqnarray*}
  which is precisely what is induced by $(f_2 \circ f_1,g_2 \circ g_1)$.

  To show that this a faithful functor when restricted to ${\bf Rel_+}$, it suffices to recount the same argument for the cosheaf given in the proof of Theorem \ref{thm:cosheaf_r0_functor}, making the observation that the components of the cosheaf morphism are simply the functions on the basis elements of the stalks of the sheaf, after a transpose.
\end{proof}

Actually, the cosheaf $\cshf{R}^0$ seems a bit more natural than the sheaf $\shf{R}^0$!  At least, $\cshf{R}^0$ doesn't entrain any linear algebraic machinery, which may be ancillary to the main point.  On the other hand, the sheaf has cohomology, which may be worth exploring.

\begin{example}
  Notice that if we tried to define $\cshf{R}^0$ as a \emph{sheaf} of sets instead of a cosheaf -- using only the basis $Y_\sigma$ rather than its span -- then the proof of Theorem \ref{thm:sheaf_r0_functor} fails to work correctly, even if we reverse the partial order on $D(X,Y,R)$.  This is not an accident, since any functor ${\bf Rel} \to {\bf Shv}$ should compose with the global sections functor $\Gamma :{\bf Shv} \to {\bf Set}$ to ensure that ${\bf Rel}$ morphisms induce functions on the space of global sections.  This fails outright for a small example in which $X_1=\{A,B,C\}$, $X_2=\{A\}$, $Y_1=Y_2=\{a,b,c\}$, where $R_1$ and $R_2$ are given by the matrices
  \begin{equation*}
    r_1 = \begin{pmatrix}1&0&0\\0&1&1\end{pmatrix}\text{ and }r_2 = \begin{pmatrix}1&1&1\end{pmatrix}.
  \end{equation*}
  Noting that there is only one option to define $f: X_1 \to X_2$, we define $g = \id_{Y_1}$.  This is clearly a relation morphism as every pair $(x,y) \in R_1 \subseteq X_1 \times Y_1$ maps to a pair that are related by $R_2$.

  Using the reverse partial order, the sheaf diagram of basis elements for $(X_1,Y_1,R_1)$ is
  \begin{equation*}
    \xymatrix{
    \{a\} & \{b,c\} & \{c\} \\
    & \{c\} \ar[u] \ar[ur] & \\
    }
  \end{equation*}
  while the sheaf diagram for basis elements of $(X_2,Y_2,R_2)$ has only one element $\{a,b,c\}$.  (Both of these are identical to the cosheaf diagrams, since the unions in the Alexandrov topology are not shown.)  There is only one global section of the first sheaf, which consists of choosing $a$ for the leftmost simplex, and $b$ for the three elements on the right.  However, this cannot obviously be mapped to a global section of the second sheaf, since that needs to be a single element of $\{a,b,c\}$.  Conversely, if we consider a global section of the second sheaf as being any one of its elements, this cannot correspond to a global section of the first sheaf.  
\end{example}

\begin{corollary}
  The composition of $CoShvRep^0: {\bf Rel} \to {\bf CoShv}$ with the functor $Base: {\bf CoShv} \to {\bf Pos}$ that forgets the structure of the costalks is $PosRep : {\bf Rel} \to {\bf Pos}$.  This also works for the composition $(Base' \circ ShvRep^0): {\bf Rel} \to {\bf Shv} \to {\bf Pos}$.  Briefly,
  \begin{equation*}
    PosRep = Base \circ CoShvRep^0 = Base' \circ ShvRep^0.
  \end{equation*}
\end{corollary}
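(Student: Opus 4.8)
The plan is to reduce everything to Proposition \ref{prop:posetrep_factor}, which already identifies $Face \circ D$ with $PosRep$, and then simply to read off the base maps of the induced (co)sheaf morphisms. The claim has essentially no computational content: it is a matter of recognizing that $Base$ and $Base'$ strip a (co)sheaf down to the poset on which it was built, and that poset is exactly the one produced by $PosRep$.

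First I would dispatch the object level. For a relation $(X,Y,R)$, Definition \ref{df:coshvrep} constructs both $\cshf{R}^0$ and $\shf{R}^0$ on the face partial order of the Dowker complex $D(X,Y,R)$. Since $Base$ and $Base'$ do nothing but return this underlying partial order,
\begin{equation*}
  (Base \circ CoShvRep^0)(X,Y,R) = (Base' \circ ShvRep^0)(X,Y,R) = Face(D(X,Y,R)).
\end{equation*}
Proposition \ref{prop:posetrep_factor} identifies the right-hand side with $PosRep(X,Y,R)$, having the same underlying set and the same (inclusion) order. This settles agreement on objects.

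Next I would check the morphism action for the cosheaf. Given a ${\bf Rel}$ morphism $(f,g)$, the proof of Theorem \ref{thm:cosheaf_r0_functor} builds the induced cosheaf morphism along the base map obtained by interpreting the simplicial map $D(f)$ as an order-preserving function via $Face$. The functor $Base$ extracts precisely this base map, which on a simplex $\sigma$ sends $\sigma \mapsto f(\sigma)$; this is exactly the order-preserving function $A \mapsto f(A)$ defining $PosRep(f,g)$ in Proposition \ref{prop:posetrep_functor}. Hence $Base \circ CoShvRep^0$ and $PosRep$ agree on morphisms as well, and since both are covariant (composition of the covariant $CoShvRep^0$ and the covariant $Base$), the two functors coincide.

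Finally I would treat the sheaf case, where the one genuinely delicate point is the variance bookkeeping. Here $ShvRep^0$ is contravariant and $Base'$ is contravariant, so their composite is covariant, matching $PosRep$. Concretely, the ${\bf Rel}$ morphism $(f,g):(X_1,Y_1,R_1)\to(X_2,Y_2,R_2)$ induces a sheaf morphism $\shf{R}_2^0 \to \shf{R}_1^0$ whose base map is again the order-preserving $D(f): Face(D(X_1,Y_1,R_1)) \to Face(D(X_2,Y_2,R_2))$. Applying the contravariant $Base'$ to this sheaf morphism returns that same base map as a ${\bf Pos}$ morphism from the poset of $D(X_1,Y_1,R_1)$ to that of $D(X_2,Y_2,R_2)$, namely $\sigma \mapsto f(\sigma)$, which is $PosRep(f,g)$. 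The main obstacle, such as it is, is verifying that the two direction reversals (one from $ShvRep^0$ reversing the arrow, one from the contravariant $Base'$ reversing it back) cancel so that the recovered base map points the same way as $PosRep(f,g)$; once this is confirmed, the identification $PosRep = Base \circ CoShvRep^0 = Base' \circ ShvRep^0$ follows.
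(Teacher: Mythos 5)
Your proposal is correct and follows the same route the paper intends: the paper states this as an immediate corollary of Theorems \ref{thm:cosheaf_r0_functor} and \ref{thm:sheaf_r0_functor} together with Proposition \ref{prop:posetrep_factor}, precisely because both $\cshf{R}^0$ and $\shf{R}^0$ are built on $Face(D(X,Y,R))$ and the induced (co)sheaf morphisms have $D(f)$ as their base maps. Your explicit check of the variance cancellation (contravariant $ShvRep^0$ composed with contravariant $Base'$ yielding the covariant $PosRep$) is exactly the point the paper flags in its remark that ``both functors are contravariant,'' so nothing is missing.
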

Notice that in the case of sheaves on partial orders, both functors are contravariant!

\section{Duality of cosheaf representations of relations}
\label{sec:duality}

The most striking fact proven in Dowker's original paper \cite{Dowker_1952} is that the homology of the Dowker complex is the same whether it is produced by the relation or by its transpose.  Dowker provides a direct, if elaborate, construction of a map inducing isomorphisms on homology.  This construction was later enhanced to a homotopy equivalence by \cite{bjorner1995topological}.  More recently, \cite{Chowdhury_2018} showed that the homotopy equivalence between these two complexes is functorial in a particular way.  This section shows that the duality is also visible in a somewhat different way: one Dowker complex is the base space of a particular cosheaf, while the other is its space of global cosections.

Let us begin by connecting the relation to its transpose.

\begin{definition}
  If $(X,Y,R)$ is a relation, then its \emph{transpose} is a relation $(Y,X,R^T)$ given by $(y,x) \in R^T$ if and only if $(x,y) \in R$.
\end{definition}

Evidently, the matrix for the transpose of a relation is simply the transpose of the original matrix.

\begin{lemma}
  The transformation $(X,Y,R) \mapsto (Y,X,R^T)$ defines a fully faithful covariant functor $Transp:{\bf Rel} \to {\bf Rel}$.
\end{lemma}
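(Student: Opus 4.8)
The plan is to specify the action of $Transp$ on morphisms by swapping the two component functions, to verify that this yields a well-defined covariant functor, and then to obtain full faithfulness almost for free by observing that $Transp$ is an involution. The only thing requiring genuine care is tracking the swap of $f$ and $g$ and invoking the equivalence $(y,x)\in R^T \iff (x,y)\in R$ in the correct direction.

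First I would define the action on morphisms. Given a ${\bf Rel}$ morphism $(f,g):(X,Y,R)\to(X',Y',R')$, set $Transp(f,g)=(g,f)$, regarded as a candidate morphism $(Y,X,R^T)\to(Y',X',{R'}^T)$. To confirm this is an honest ${\bf Rel}$ morphism, suppose $(y,x)\in R^T$. By the definition of the transpose this is equivalent to $(x,y)\in R$, whence $(f(x),g(y))\in R'$ because $(f,g)$ is a morphism, and this in turn is equivalent to $(g(y),f(x))\in {R'}^T$. Thus $(g,f)$ respects the transposed relations, as required.

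Next I would check functoriality and covariance, which is pure bookkeeping. The identity morphism $(\id_X,\id_Y)$ maps to $(\id_Y,\id_X)$, which is the identity on $(Y,X,R^T)$. For composition, if $(f',g')$ follows $(f,g)$, then the composite $(f'\circ f,\, g'\circ g)$ maps to $(g'\circ g,\, f'\circ f)$, which is exactly $(g',f')\circ(g,f)=Transp(f',g')\circ Transp(f,g)$; since the order of composition is preserved, the functor is covariant.

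Finally, for full faithfulness I would exploit the fact that transposing twice returns the original data: $(R^T)^T=R$ and swapping the two functions twice is the identity, so $Transp\circ Transp=\id_{\bf Rel}$. A functor possessing a two-sided inverse is an isomorphism of categories and hence is automatically fully faithful. If a more hands-on argument is preferred, faithfulness is immediate because $(f,g)\mapsto(g,f)$ is injective on each hom-set, and fullness follows because an arbitrary morphism $(p,q):(Y,X,R^T)\to(Y',X',{R'}^T)$ is the image $Transp(q,p)$ of the morphism $(q,p):(X,Y,R)\to(X',Y',R')$, which one verifies to be a valid ${\bf Rel}$ morphism by the same transpose-equivalence used above. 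There is really no hard step here; the main risk is a direction error in the transpose condition, which the explicit chain of equivalences in the second paragraph is designed to forestall.
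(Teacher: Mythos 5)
Your proof is correct and takes essentially the same approach as the paper: $Transp$ acts on morphisms by swapping the component pair, $(f,g)\mapsto(g,f)$, and covariance follows because composition of function pairs is preserved in order. Your write-up is actually more complete than the paper's own proof, which leaves both the verification that $(g,f)$ respects the transposed relations and the full-faithfulness claim implicit; your observation that $Transp\circ Transp=\id_{\bf Rel}$, so that $Transp$ is an isomorphism of categories and hence automatically fully faithful, cleanly fills that gap.
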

\begin{proof}
  Every ${\bf Rel}$ morphism $(f,g) : (X,Y,R) \to (X',Y',R')$ is transformed to $(g,f) : (Y,X,R^T) \to (Y',X',(R')^T)$.  Composition is still composition of functions and is preserved in order.
\end{proof}

  \begin{figure}
  \begin{center}
    \includegraphics[height=1.5in]{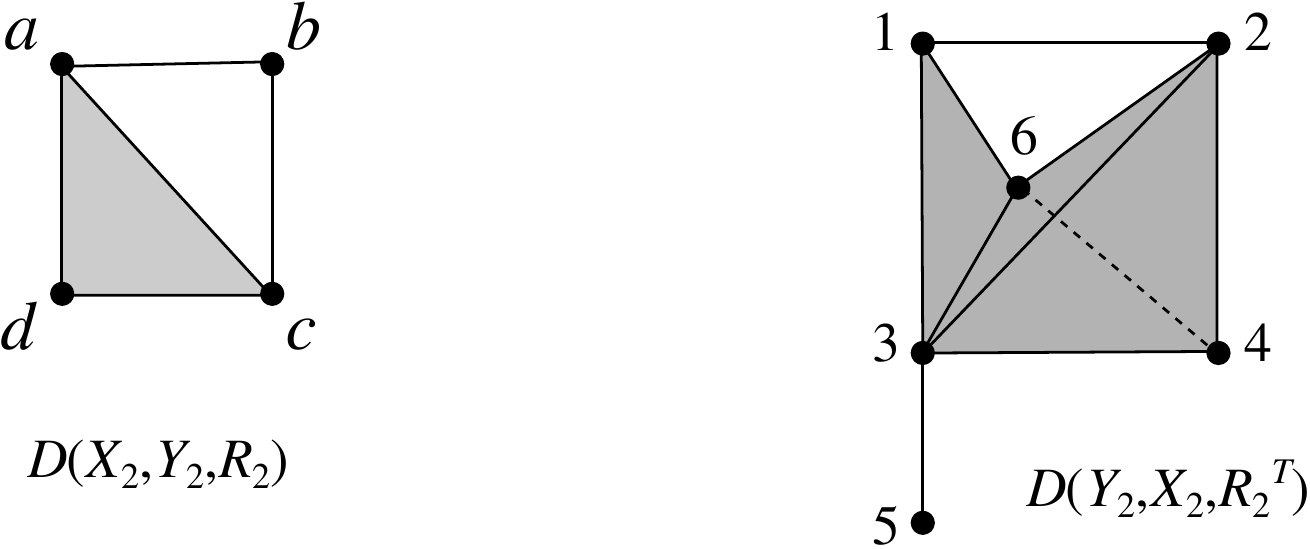}
    \caption{The Dowker complex for a relation $R_2$ and its transpose given in Example \ref{eg:eg2_dowker_transpose}.}
    \label{fig:eg2_dowker_transpose}
  \end{center}
  \end{figure}

  \begin{example}
    \label{eg:eg2_dowker_transpose}

  Recall the relation $R_2$ between $X_2=\{a,b,c,d\}$ and $Y_2=\{1,2,3,4,5,6\}$ from Example \ref{eg:eg2_dowker}, which was given by the matrix
  \begin{equation*}
    r_2 = \begin{pmatrix}
      1&0&1&0&0&1\\
      1&1&0&0&0&0\\
      0&1&1&1&0&1\\
      0&0&1&0&1&0\\
    \end{pmatrix}.
  \end{equation*}
  The transpose of this relation has the matrix
  \begin{equation*}
    r_2^T = \begin{pmatrix}
      1&1&0&0\\
      0&1&1&0\\
      1&0&1&1\\
      0&0&1&0\\
      0&0&0&1\\
      1&0&1&0\\
    \end{pmatrix}.
  \end{equation*}
  Their Dowker complexes are shown in Figure \ref{fig:eg2_dowker_transpose}.  Clearly these complexes have the same homotopy type!
\end{example}

\begin{definition}
  The category ${\bf CoShvAsc}$ consists of the full subcategory of ${\bf CoShv}$ whose objects are cosheaves on abstract simplicial complexes of abstract simplicial complexes, and whose extensions are simplicial inclusions.  Briefly, an object of ${\bf CoShvAsc}$ is a contravariant functor $\cshf{C}$ from the face partial order of an abstract simplicial complex to ${\bf Asc}$, with the additional condition that each extension $\cshf{C}(\sigma \subseteq \tau) : \cshf{C}(\tau) \to\cshf{C}(\sigma)$ is a simplicial map whose vertex function is an inclusion.
\end{definition}

\begin{definition}
  \label{df:cshf_r}
 The \emph{cosheaf representation of a relation $(X,Y,R)$} is a cosheaf $\cshf{R}=CoShvRep(X,Y,R)$ of abstract simplicial complexes, defined by the following recipe:
\begin{description}
\item[Costalks] If $\sigma$ is a simplex of $D(X,Y,R)$, then $\cshf{R}(\sigma) = D\left(Y_\sigma,\sigma,(R|_{\sigma,Y_\sigma})^T\right)$,
\item[Extensions] If $\sigma \subseteq \tau$ are two simplices of $D(X,Y,R)$, then the extension $\cshf{R}(\sigma\subseteq \tau) : \cshf{R}(\tau) \to \cshf{R}(\sigma)$ is the simplicial map along the inclusion $Y_\tau \hookrightarrow Y_\sigma$.
\end{description}
\end{definition}

The cosheaf $\cshf{R}^0 = CoShvRep^0(X,Y,R)$ for a relation $(X,Y,R)$ defined in Section \ref{sec:functoriality} is a sub-cosheaf of $\cshf{R}=CoShvRep(X,Y,R)$.  Evidently, $\cshf{R}$ is an object of ${\bf CoShvAsc}$.

\begin{figure}
  \begin{center}
    \includegraphics[height=2in]{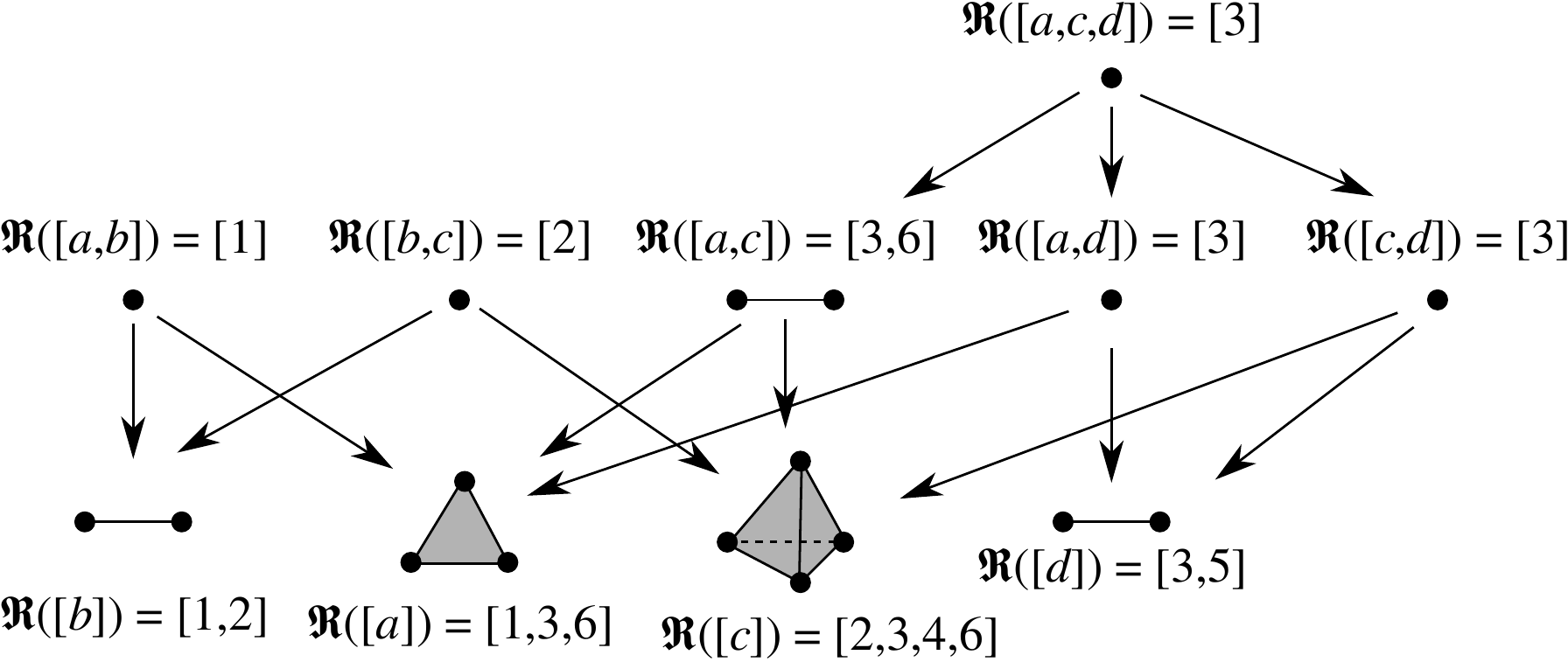}
    \caption{The diagram of the cosheaf $\cshf{R}$ defined in Example \ref{eg:eg2_cosheaf} for the relation $(X_2,Y_2,R_2)$ defined in Example \ref{eg:eg2_dowker}.}
    \label{fig:eg2_cosheaf}
  \end{center}
\end{figure}

\begin{figure}
  \begin{center}
    \includegraphics[height=1.125in]{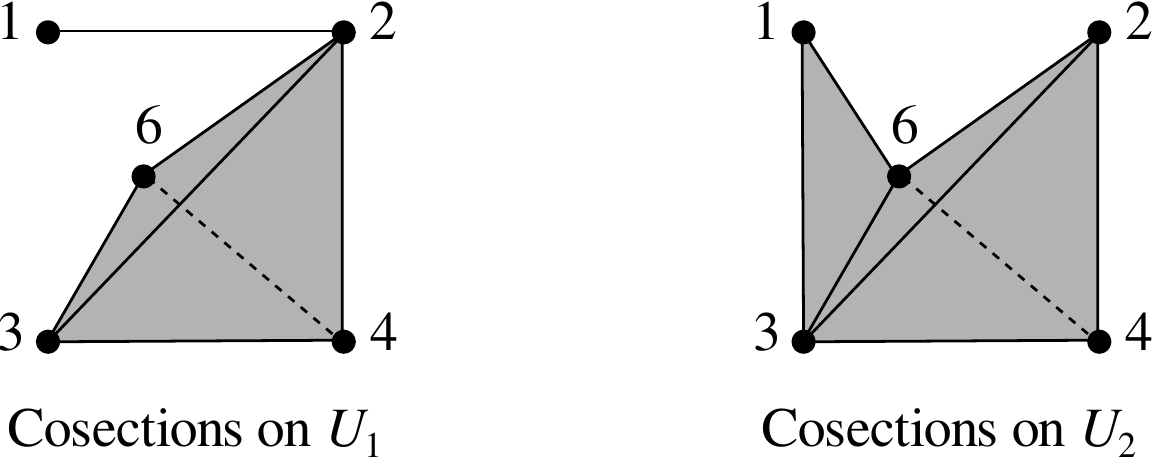}
    \caption{Some spaces of cosections of the cosheaf $\cshf{R}$ defined in Example \ref{eg:eg2_cosheaf}: (left) cosections on the set $U_1$, (right) cosections on the set $U_2$.}
    \label{fig:eg2_cosheaf_cosections}
  \end{center}
\end{figure}

\begin{example}
  \label{eg:eg2_cosheaf}
  Recall the relation $R_2$ between $X_2=\{a,b,c,d\}$ and $Y_2=\{1,2,3,4,5,6\}$ from Example \ref{eg:eg2_dowker}, which was given by the matrix
  \begin{equation*}
    r_2 = \begin{pmatrix}
      1&0&1&0&0&1\\
      1&1&0&0&0&0\\
      0&1&1&1&0&1\\
      0&0&1&0&1&0\\
    \end{pmatrix}.
  \end{equation*}
  The cosheaf $\cshf{R}^0 = CoShvRep^0(X_2,Y_2,R_2)$ was described in Example \ref{eg:eg2_cosheaf0}.  The diagram for $\cshf{R} = CoShvRep(X_2,Y_2,R_2)$ is shown in Figure \ref{fig:eg2_cosheaf}, which incorporates all of the data from the previous examples into a single figure.  Notice that each costalk shown in the diagram is a complete simplex.  The space of cosections over the set
  \begin{equation*}
    U_1 = \{[b],[c],[a,b], [a,c], [b,c], [c,d], [a,c,d]\}
  \end{equation*}
  is an abstract simplicial complex on the vertex set that is the union
  \begin{equation*}
    \cshf{R}([b]) \cup \cshf{R}([c]) = \{1,2\}\cup \{2,3,4,6\} = \{1,2,3,4,6\},
  \end{equation*}
  but is not the complete simplex on those vertices.  Instead, it is the simplicial complex shown at left in Figure \ref{fig:eg2_cosheaf_cosections}.  Likewise the space of cosections over the set
  \begin{equation*}
    U_2 = \{[a],[c],[a,b],[b,c],[a,c],[a,d],[c,d],[a,c,d]\}
  \end{equation*}
  is shown at right in Figure \ref{fig:eg2_cosheaf_cosections}.  From these two examples, it is clear that the space of global cosections is indeed $D(Y_2,X_2,R_2^T)$, as shown in Figure \ref{fig:eg2_dowker_transpose}.
\end{example}

\begin{lemma}
  \label{lem:costalks_complete_simplex}
For any simplex $\tau$ of $D(X,Y,R)$, the costalk $\cshf{R}(\tau) = D\left(Y_\tau,\tau,(R|_{\tau,Y_\tau})^T\right)$ is always a complete simplex on the vertex set $Y_\tau$.
\end{lemma}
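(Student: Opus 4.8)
The plan is to unwind the definition of the costalk $\cshf{R}(\tau)$ as a Dowker complex and show directly that every subset of $Y_\tau$ is a simplex. Recall that $\cshf{R}(\tau) = D\left(Y_\tau,\tau,(R|_{\tau,Y_\tau})^T\right)$ has the elements of $Y_\tau$ as its vertices. By Definition \ref{df:dowker}, a finite set $[y_{i_0}, \dotsc, y_{i_k}]$ of elements of $Y_\tau$ is a simplex of this complex precisely when there exists some $x \in \tau$ with $(y_{i_j},x) \in (R|_{\tau,Y_\tau})^T$ for every $j$. Translating through the definition of the transpose, this condition reads: there is an $x \in \tau$ such that $(x,y_{i_j}) \in R$ for all $j$.

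The crux of the argument is then the observation that the witness can be chosen uniformly. Since every vertex $y_{i_j}$ lies in $Y_\tau$, the defining property of $Y_\tau$ guarantees that $(x,y_{i_j}) \in R$ for \emph{every} $x \in \tau$, not merely for some $x$. Assuming $\tau$ is nonempty, I would fix an arbitrary vertex $x_0 \in \tau$; then $x_0$ simultaneously witnesses membership for every $y_{i_j}$, so $[y_{i_0}, \dotsc, y_{i_k}]$ is indeed a simplex of $\cshf{R}(\tau)$. As the set $\{y_{i_0}, \dotsc, y_{i_k}\} \subseteq Y_\tau$ was arbitrary, every subset of $Y_\tau$ is a simplex, which is exactly the assertion that $\cshf{R}(\tau)$ is the complete simplex on the vertex set $Y_\tau$.

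There is essentially no hard step here; the content is entirely bookkeeping, and the result is an immediate consequence of Lemma \ref{lem:y_inclusion}-style reasoning applied to the constant family over $\tau$. The only point demanding care is tracking the roles of the two sets after the transpose, so that the existential quantifier in the Dowker construction ranges over $\tau$ (the second coordinate of $(R|_{\tau,Y_\tau})^T$) while the vertices range over $Y_\tau$. The one genuine edge case is $\tau = \emptyset$, for which no witness $x_0 \in \tau$ exists; I would either restrict attention to nonempty simplices or note that $Y_\emptyset = Y$ and handle that degenerate costalk separately, consistent with the earlier convention of explicitly tracking the empty simplex.
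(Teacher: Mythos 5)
Your proof is correct and takes essentially the same approach as the paper's: the paper's one-sentence argument likewise observes that membership of every $y_i$ in $Y_\tau$ means any single $x \in \tau$ serves as a uniform witness, so every subset of $Y_\tau$ is a simplex. Your additional care about the degenerate case $\tau = \emptyset$ (where no witness exists) is a detail the paper's proof silently glosses over, but it does not alter the substance of the argument.
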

\begin{proof}
  Every subset $\{y_0,y_1, \dotsc, y_n\}$ consisting of elements $y_i$ of $Y_\tau$ is a simplex of $D\left(Y_\tau,\tau,(R|_{\tau,Y_\tau})^T\right)$, since that merely requires there to be at least one $x \in \tau$ to exist such that $(x,y_i) \in R$ for all $i$.
\end{proof}

\begin{lemma}
  The extensions defined for the cosheaf $\cshf{R}$ for a relation $(X,Y,R)$ in Definition \ref{df:cshf_r} are well-defined simplicial maps.
\end{lemma}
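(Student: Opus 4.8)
The plan is to reduce the statement to two facts already established: that each costalk is a complete simplex (Lemma \ref{lem:costalks_complete_simplex}) and that the relevant vertex sets nest appropriately (Lemma \ref{lem:y_inclusion}). Since a simplicial map is nothing more than a vertex function that carries simplices to simplices, and since the target costalk here will turn out to be a complete simplex, the simplex-preservation requirement becomes automatic. The only genuine content is to verify that the proposed vertex function --- the inclusion $Y_\tau \hookrightarrow Y_\sigma$ --- is well-defined in the first place.

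First I would fix a pair $\sigma \subseteq \tau$ of simplices of $D(X,Y,R)$. By Lemma \ref{lem:costalks_complete_simplex}, the costalk $\cshf{R}(\tau)$ is the complete simplex on the vertex set $Y_\tau$, and $\cshf{R}(\sigma)$ is the complete simplex on the vertex set $Y_\sigma$. By Lemma \ref{lem:y_inclusion}, the hypothesis $\sigma \subseteq \tau$ forces $Y_\tau \subseteq Y_\sigma$, so the inclusion $Y_\tau \hookrightarrow Y_\sigma$ is a legitimate function carrying each vertex of $\cshf{R}(\tau)$ to a vertex of $\cshf{R}(\sigma)$; its vertex function is literally an inclusion, which is exactly the condition needed for membership in ${\bf CoShvAsc}$.

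Next I would check simplex preservation. A simplex of $\cshf{R}(\tau)$ is, by completeness, just an arbitrary finite subset $\{y_0,\dotsc,y_n\} \subseteq Y_\tau$. Under the inclusion this maps to the same set, now regarded as a subset of $Y_\sigma$ via $Y_\tau \subseteq Y_\sigma$; and since $\cshf{R}(\sigma)$ is complete on $Y_\sigma$, every such subset is a simplex of $\cshf{R}(\sigma)$. Hence the inclusion induces a well-defined simplicial map $\cshf{R}(\tau) \to \cshf{R}(\sigma)$, as claimed.

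I anticipate no serious obstacle here: completeness of the costalks is precisely the device that trivializes the simplex-preservation condition, so once the vertex-set inclusion is justified by Lemma \ref{lem:y_inclusion} the argument closes immediately. The one point to state carefully is that a simplex of a complete simplex is merely an arbitrary subset of its vertex set, so that no further combinatorial hypothesis needs to be checked. Should one wish to confirm in passing that these extensions assemble into a cosheaf, the composition axiom for $\sigma \subseteq \tau \subseteq \upsilon$ follows at once, since the composite of the vertex inclusions $Y_\upsilon \hookrightarrow Y_\tau \hookrightarrow Y_\sigma$ is again the single inclusion $Y_\upsilon \hookrightarrow Y_\sigma$.
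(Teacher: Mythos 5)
Your proof is correct and follows essentially the same route as the paper's: both fix $\sigma \subseteq \tau$, invoke Lemma \ref{lem:costalks_complete_simplex} to reduce simplex preservation to a statement about vertex sets, and then use the containment $Y_\tau \subseteq Y_\sigma$ (which you cite from Lemma \ref{lem:y_inclusion}, while the paper re-derives it inline) to conclude that every simplex of $\cshf{R}(\tau)$ is a simplex of $\cshf{R}(\sigma)$. The only differences are cosmetic: your explicit citation of Lemma \ref{lem:y_inclusion} and your closing remark on the composition axiom are slightly tidier packaging of the same argument.
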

\begin{proof}
  Suppose that $\sigma \subseteq \tau$ are two simplices of $D(X,Y,R)$. Lemma \ref{lem:costalks_complete_simplex} establishes that both $\cshf{R}(\sigma)$ and $\cshf{R}(\tau)$ are complete simplices.  Accordingly, consider the subset $\{y_0,y_1, \dotsc, y_n\}$ consisting of elements $y_i$ of $Y_\tau$.  Notice that by the definition of $Y_\tau$, for every $x \in \tau$ and every $i$, it follows that $(x,y_i) \in R$. Therefore, since $\sigma \subseteq \tau$, this condition also holds for every $x \in \sigma$.  Thus, every simplex of $D\left(Y_\tau,\tau,(R|_{\tau,Y_\tau})^T\right)$ is also a simplex of $D\left(Y_\sigma,\sigma,(R|_{\sigma,Y_\sigma})^T\right)$ whenever $\sigma \subseteq \tau$.
\end{proof}

These two Lemmas together imply that Theorem \ref{thm:cosheaf_r0_functor} extends immediately to a functoriality result for $\cshf{R}$.

\begin{corollary}
  \label{cor:cosheaf_r_functor}
  The transformation $(X,Y,R) \mapsto \cshf{R}$ is a covariant functor $CoShvRep: {\bf Rel} \to {\bf CoShvAsc}$.  If we restrict to ${\bf Rel_+} \to {\bf CoShvAsc}$, this becomes a faithful covariant functor.
\end{corollary}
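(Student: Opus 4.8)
The plan is to reuse the argument of Theorem~\ref{thm:cosheaf_r0_functor} essentially verbatim, upgrading every set-level statement there to a statement about complete simplices. The crucial observation is that $\cshf{R}$ and $\cshf{R}^0$ share the same base space, namely the Dowker complex $D(X,Y,R)$, so the base map of any induced cosheaf morphism is again the simplicial map $D(f)$ produced by Theorem~\ref{thm:dowker_functor}, reinterpreted as an order-preserving function through Propositions~\ref{prop:posetrep_functor} and~\ref{prop:posetrep_factor}. Nothing about the base map changes, so the only work is to supply component maps at the level of costalks and to verify that the morphism square of Definition~\ref{df:morphism} commutes.

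First I would construct the component maps. Given a ${\bf Rel}$ morphism $(f,g):(X_1,Y_1,R_1)\to(X_2,Y_2,R_2)$, write $\cshf{R}_1 = CoShvRep(X_1,Y_1,R_1)$ and $\cshf{R}_2 = CoShvRep(X_2,Y_2,R_2)$, and fix a simplex $\sigma$ of $D(X_1,Y_1,R_1)$. Lemma~\ref{lem:y_morphism} guarantees that the restricted vertex function $g|_{(Y_1)_\sigma}$ maps $(Y_1)_\sigma$ into $(Y_2)_{f(\sigma)}$. By Lemma~\ref{lem:costalks_complete_simplex} the costalks $\cshf{R}_1(\sigma)$ and $\cshf{R}_2(f(\sigma))$ are the complete simplices on these two vertex sets, so this vertex function automatically determines a simplicial map $m_\sigma : \cshf{R}_1(\sigma)\to\cshf{R}_2(f(\sigma))$---in a complete simplex the image of any subset of vertices is itself a simplex, so there is nothing to check. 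These $m_\sigma$ are the proposed component maps.

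The verification of the morphism square then reduces to the vertex level. The extensions of $\cshf{R}$ are, by Definition~\ref{df:cshf_r} and the preceding lemma, the well-defined simplicial maps along the inclusions $Y_\tau\hookrightarrow Y_\sigma$, and the component maps $m_\sigma$ are likewise determined by their vertex functions; since a simplicial map is fixed by its action on vertices, commutativity of the square of simplicial maps follows from commutativity of the underlying square of vertex functions, which is exactly the content of Corollary~\ref{cor:y_diagram}. Covariance---that $(f_2,g_2)\circ(f_1,g_1)$ induces $m_2\circ m_1$---is then the identical computation to the one in Theorem~\ref{thm:cosheaf_r0_functor}, since on vertices the composite component map is $(g_2\circ g_1)|_{(Y_1)_\sigma}$.

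For faithfulness on ${\bf Rel_+}$ I would exploit the fact, noted after Definition~\ref{df:cshf_r}, that $\cshf{R}^0$ is a sub-cosheaf of $\cshf{R}$: the vertex function of each component $m_\sigma$ is precisely the component $g|_{(Y_1)_\sigma}$ of the cosheaf morphism built from $\cshf{R}^0$ in Theorem~\ref{thm:cosheaf_r0_functor}. Consequently any two ${\bf Rel}$ morphisms inducing the same $\cshf{R}$-morphism also induce the same $\cshf{R}^0$-morphism, and the faithfulness of $CoShvRep^0$ on ${\bf Rel_+}$ already established in Theorem~\ref{thm:cosheaf_r0_functor} forces the two ${\bf Rel}$ morphisms to coincide. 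The only place requiring any genuine thought is the passage from vertex-level commutativity to simplicial-map commutativity; but because every costalk is a complete simplex, a simplicial map is determined by its vertex function and this passage is automatic. I therefore expect no substantive obstacle, which is exactly why the result can be stated as a corollary of Theorem~\ref{thm:cosheaf_r0_functor} together with the two lemmas above.
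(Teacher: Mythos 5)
Your proposal is correct and takes essentially the same approach as the paper: the paper's proof is precisely the remark that Lemma~\ref{lem:costalks_complete_simplex} and the well-definedness of the extensions allow the argument of Theorem~\ref{thm:cosheaf_r0_functor} to extend immediately, with the component maps now read as simplicial maps determined by their vertex functions $g|_{(Y_1)_\sigma}$. Your explicit construction of the components, the vertex-level verification via Corollary~\ref{cor:y_diagram}, and the reduction of faithfulness to that of $CoShvRep^0$ on ${\bf Rel_+}$ simply spell out what the paper leaves implicit.
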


\begin{theorem}
  \label{thm:cosheaf_global_cosections}
  The space of global cosections of $\cshf{R}$ is simplicially isomorphic to $D(Y,X,R^T)$, the Dowker complex for the transpose.
\end{theorem}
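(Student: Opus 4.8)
The plan is to identify the space of global cosections of $\cshf{R}$ with the colimit, taken in ${\bf Asc}$, of the diagram of complete simplices $\cshf{R}(\sigma)$ and their extension maps, and then to compute this colimit's vertices and simplices directly. Throughout I regard $D(X,Y,R)$ as the poset of its \emph{nonempty} simplices; the costalk at the empty simplex is degenerate (it is the Dowker complex of the empty relation, which carries no vertices) and is harmless to omit. Concretely, I expect the cosections to be the abstract simplicial complex whose vertex set is $\left(\bigsqcup_\sigma Y_\sigma\right)/\sim$ and in which a finite set of vertex-classes is a simplex exactly when it is the image of a simplex of a single costalk $\cshf{R}(\sigma)$ under the canonical map into the colimit. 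Closure under faces is automatic, since every extension map, and hence every canonical map into the colimit, is simplicial and therefore carries faces to faces.

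First I would pin down the vertices. Since every extension $\cshf{R}(\sigma\subseteq\tau)$ acts on vertices as the inclusion $Y_\tau\hookrightarrow Y_\sigma$ of subsets of $Y$, the generating relations of $\sim$ only ever identify two copies of one and the same element $y\in Y$. To see that all copies of a given $y$ collapse to a single class, suppose $y\in Y_\sigma$ and $y\in Y_{\sigma'}$ for possibly incomparable $\sigma,\sigma'$. Then $(x,y)\in R$ for every $x\in\sigma\cup\sigma'$, so $\sigma\cup\sigma'$ is itself a simplex of $D(X,Y,R)$ (witnessed by $y$) with $y\in Y_{\sigma\cup\sigma'}$; the two extensions out of $\sigma\cup\sigma'$ then identify both copies with $y\in Y_{\sigma\cup\sigma'}$. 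Hence the vertex set of the cosections is exactly $\{y\in Y:\exists x\in X,\ (x,y)\in R\}$, which is precisely the vertex set of $D(Y,X,R^T)$.

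Next I would compute the simplices. By Lemma \ref{lem:costalks_complete_simplex} each costalk $\cshf{R}(\sigma)$ is the \emph{complete} simplex on $Y_\sigma$, so its simplices are all subsets $S\subseteq Y_\sigma$, and the canonical map sends such an $S$ to the same set $S$ regarded inside $\{y:\exists x,\ (x,y)\in R\}$. Therefore $S$ is a simplex of the cosections if and only if $S\subseteq Y_\sigma$ for some nonempty $\sigma$. I would then match the two implications against the definition of $D(Y,X,R^T)$: if $S\subseteq Y_\sigma$, then any chosen $x\in\sigma$ satisfies $(x,y)\in R$ for all $y\in S$, so $S$ is a simplex of $D(Y,X,R^T)$; conversely, if some $x$ has $(x,y)\in R$ for all $y\in S$, then $\sigma=[x]$ is a nonempty simplex of $D(X,Y,R)$ with $S\subseteq Y_{[x]}$. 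Since both the vertex bijection and the simplex correspondence are the identity on $Y$-labels, they assemble into the desired simplicial isomorphism.

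The main obstacle is conceptual rather than computational: making precise what the ``space of global cosections'' should mean for a cosheaf valued in ${\bf Asc}$ rather than in ${\bf Set}$, and in particular justifying that a cosection-simplex must arise from one single costalk and cannot be manufactured by gluing vertices drawn from several incomparable costalks. This ``single common witness $x$'' phenomenon is exactly the content of Dowker duality expressed in cosheaf language, and once the colimit description above is in hand the remaining verifications are the routine unwindings sketched above.
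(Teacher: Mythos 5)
Your proposal is correct and takes essentially the same route as the paper: the core of both arguments is the two-way witness correspondence, namely that a simplex of $D(Y,X,R^T)$ with witness $x$ lies in the costalk $\cshf{R}([x])$, while every simplex of a costalk $\cshf{R}(\tau)$ is a simplex of $D(Y,X,R^T)$ witnessed by any $x \in \tau$. The ``main obstacle'' you flag is settled by the paper's set-level definition of global cosections (applied to the costalks' simplex sets, with extensions acting as inclusions), under which every cosection is by construction an equivalence class of simplices each drawn from a single costalk; your colimit description, the union-simplex argument collapsing duplicate copies of each $y$, and the exclusion of the empty simplex make explicit what the paper compresses into the remark that one need not worry about conflicting names for elements of $Y$.
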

\begin{proof}
  Before we begin, notice that the vertices of $D(Y,X,R^T)$ are elements of $Y$ that are related to at least one element of $X$.  These are also elements of the costalks of $\cshf{R}$, and since the extensions of $\cshf{R}$ are inclusions, we need not worry about conflicting names for elements of $Y$.  Therefore, to establish this result, we simply need to show that every simplex $\sigma \in D(Y,X,R^T)$ appears in at least one costalk of $\cshf{R}$, and conversely that every simplex in every costalk of $\cshf{R}$ is also a simplex of $D(Y,X,R^T)$.

  Suppose that $\sigma=[y_0,y_1,\dotsc,y_n]$ is a simplex of $D(Y,X,R^T)$.  This means that there is an $x \in X$ such that $(x,y_i) \in R$ for all $i=0,\dotsc,n$.  Put another way, every $y_i \in \sigma$ is also an element of $Y_{[x]}$.  Therefore, the costalk $\cshf{R}([x])$ contains $\sigma$.

  Suppose that $\sigma=[y_0,y_1,\dotsc,y_n]$ is a simplex of $\cshf{R}(\tau)$ for some simplex $\tau$ of $D(X,Y,R)$.  This means that $\sigma$ is a simplex of $D\left(Y_\tau,\tau,(R|_{\tau,Y_\tau})^T\right)$, by definition.  That means that if we select any $x \in \tau$, it follows that $(y_i,x) \in (R|_{\tau,Y_\tau})^T \subseteq R^T$.  Therefore, $\sigma$ is a simplex of $D(Y,X,R^T)$. 
\end{proof}

What this means is that we have the following functorial diagram
\begin{equation*}
  \xymatrix{
    && {\bf Asc} \ar[dr]^G \\
    {\bf Rel} \ar[urr]^D \ar[d]_{Transp} \ar[rr]_-{CoShvRep} && {\bf CoShvAsc} \ar[u]_{Base} \ar[d]^{\Gamma} & {\bf CW}\\
    {\bf Rel} \ar[rr]_{D} && {\bf Asc} \ar[ur]_G \\
    }
\end{equation*}
where ${\bf CW}$ is the category of CW complexes and homotopy classes of continuous maps, $G$ is the geometric realization of an abstract simplicial complex, $Base$ is the functor that forgets the costalks of a cosheaf (Corollary \ref{cor:base_functor}), and $\Gamma$ is the functor that constructs the space of global cosections from a cosheaf (Lemma \ref{lem:morphisms_cosections}).  Dowker duality asserts that the top and bottom paths in this diagram are equivalent up to homotopy.

For a cosheaf $\cshf{C}$ on an abstract simplicial complex $X$ of abstract simplicial complexes whose extensions are inclusions, let us define a new cosheaf $Dual(\cshf{C})$ on the space of global cosections of $\cshf{C}$.  Noting that the space of global cosections $\cshf{C}(X)$ is also an abstract simplicial complex, suppose $\sigma$ is a simplex of $\cshf{C}(X)$.  Define the costalk $\left(Dual(\cshf{C})\right)(\sigma)$ to be the simplicial complex formed by the union of every simplex $\alpha$ in $X$ whose costalk $\cshf{C}(\alpha)$ contains $\sigma$.  Abstractly, this is equivalent to a union
\begin{equation*}
  \left(Dual(\cshf{C})\right)(\sigma) =  \bigcup \{ \alpha \in X : \sigma \in \cshf{C}(\alpha)\},
\end{equation*}
which implies that $Dual(\cshf{C})$ is a well-defined cosheaf when the extensions are all chosen to be inclusions.

\begin{figure}
  \begin{center}
    \includegraphics[height=2.75in]{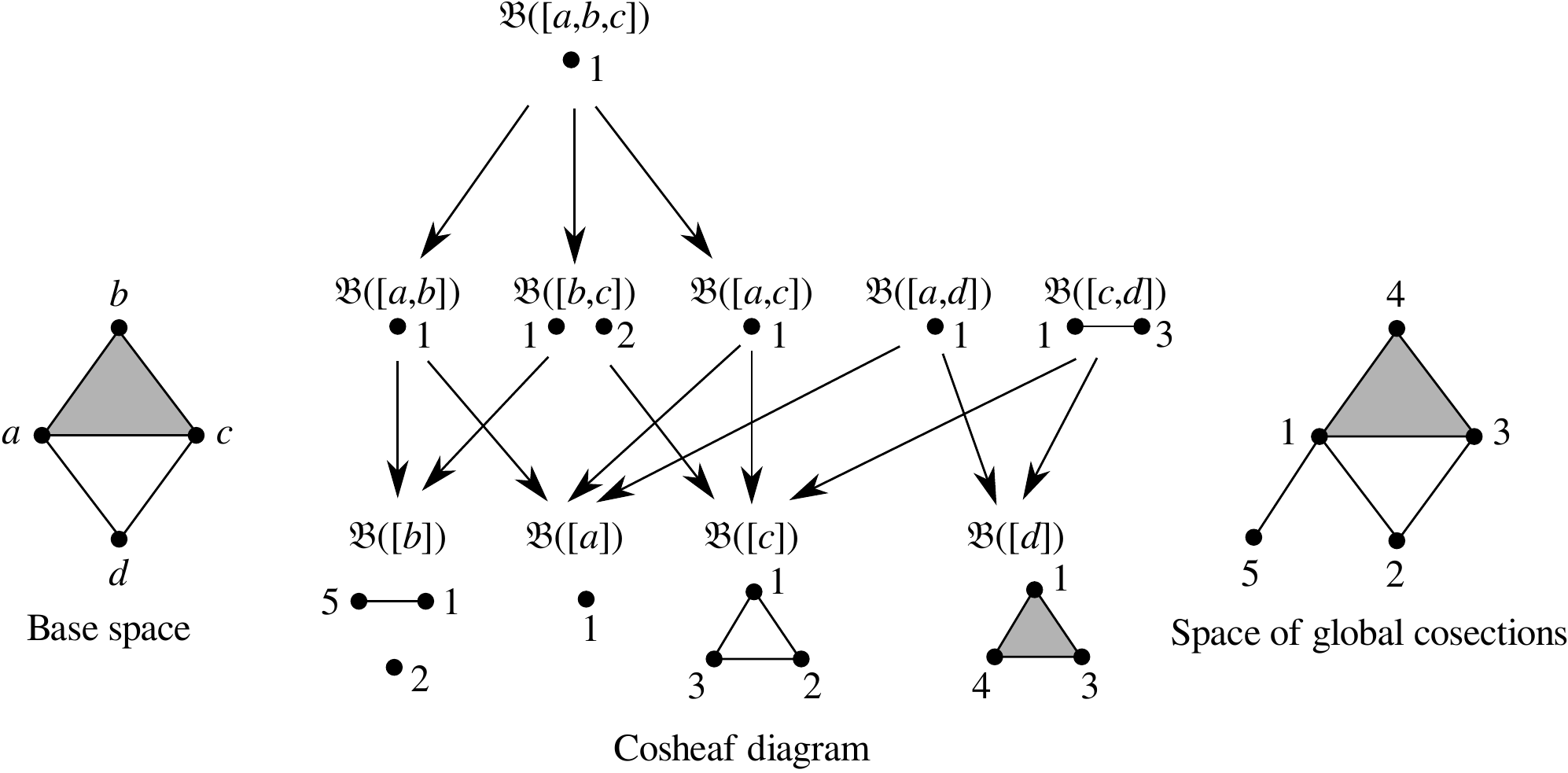}
    \caption{A cosheaf $\cshf{B}$ of abstract simplicial complexes described in Example \ref{eg:eg1_coshvasc}: (left) the base space of $\cshf{B}$, (center) the diagram of $\cshf{B}$, (right) the space of global cosections of $\cshf{B}$.}
    \label{fig:eg1_coshvasc}
  \end{center}
\end{figure}

\begin{figure}
  \begin{center}
    \includegraphics[height=2.5in]{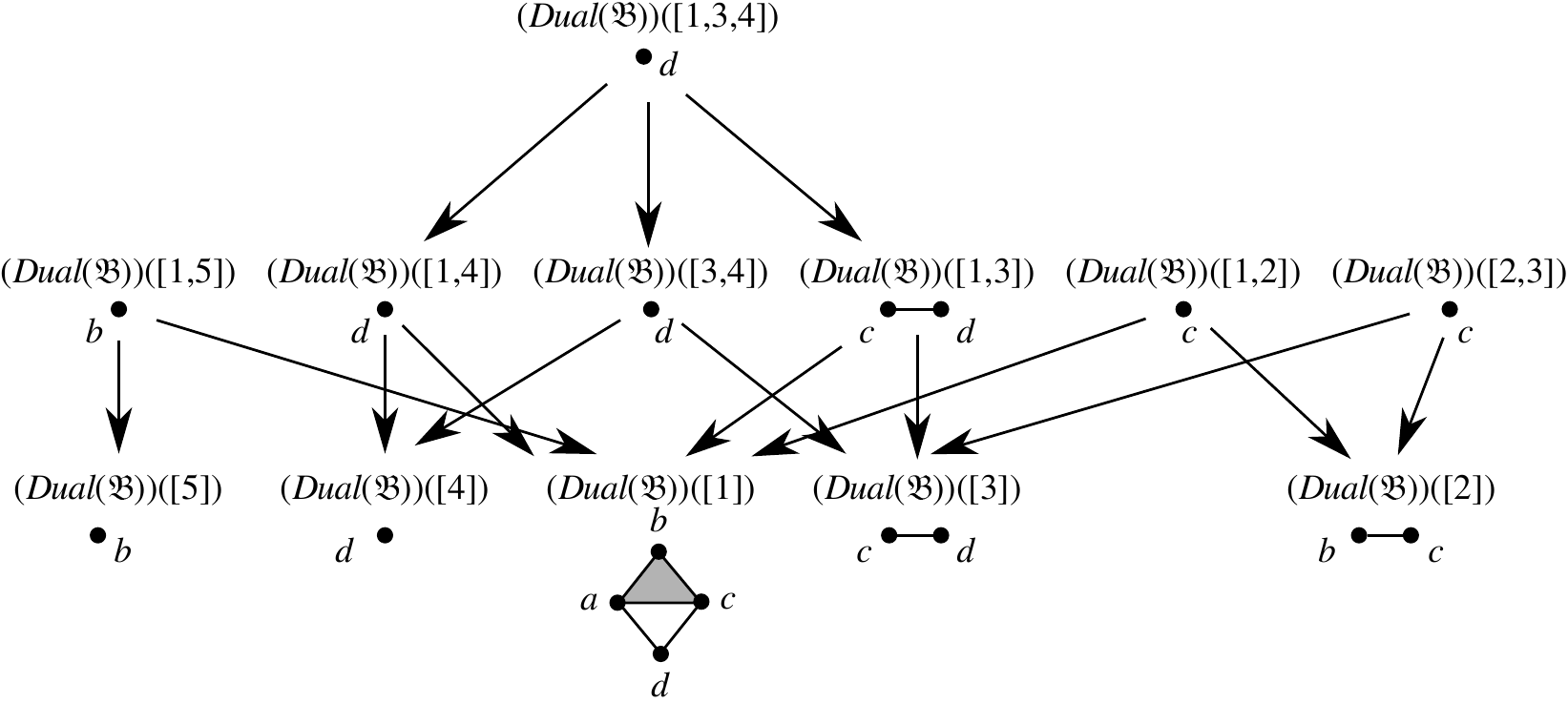}
    \caption{The cosheaf $Dual(\cshf{B})$ that is dual to the cosheaf $\cshf{B}$ shown in Figure \ref{fig:eg1_coshvasc} and described in Example \ref{eg:eg1_coshvasc}.}
    \label{fig:eg1_coshvasc_dual}
  \end{center}
\end{figure}

\begin{example}
  \label{eg:eg1_coshvasc}
  Figure \ref{fig:eg1_coshvasc} shows a cosheaf $\cshf{B}$ of abstract simplicial complexes on an abstract simplicial complex.  Since each extension map is an inclusion, this cosheaf is an object in ${\bf CoShvAsc}$.  The space of global cosections of this cosheaf is an abstract simplicial complex, which is shown at right in Figure \ref{fig:eg1_coshvasc}.  The cosheaf $Dual(\cshf{B})$ can therefore be constructed on this new abstract simplicial complex using the definition above.  The resulting cosheaf is shown in Figure \ref{fig:eg1_coshvasc_dual}, where it is clear that each extension of this new cosheaf is an inclusion.  It is also easily seen that the space of global cosections of $Dual(\cshf{B})$ is the base space of $\cshf{B}$.
\end{example}

\begin{lemma}
  $Dual$ is a covariant functor ${\bf CoShvAsc} \to {\bf CoShvAsc}$.
\end{lemma}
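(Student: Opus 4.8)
The plan is to verify the three defining properties of a covariant functor: that $Dual$ carries objects of ${\bf CoShvAsc}$ to objects of ${\bf CoShvAsc}$, that it carries morphisms to morphisms, and that it respects identities and composition. The object part is essentially the content of the discussion preceding the lemma, where $\sigma \mapsto \bigcup\{\alpha \in X : \sigma \in \cshf{C}(\alpha)\}$ is declared a cosheaf on the global cosections $\cshf{C}(X)$ with inclusions as extensions. The one point I would record explicitly is that these extensions really are inclusions: if $\sigma \subseteq \sigma'$ in $\cshf{C}(X)$, then any $\alpha$ with $\sigma' \in \cshf{C}(\alpha)$ also has $\sigma \in \cshf{C}(\alpha)$, since $\cshf{C}(\alpha)$ is a simplicial complex, so $Dual(\cshf{C})(\sigma') \subseteq Dual(\cshf{C})(\sigma)$ and $Dual(\cshf{C})$ is a genuine object of ${\bf CoShvAsc}$.

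The heart of the argument is the action on morphisms. Given a cosheaf morphism $m : \cshf{C} \to \cshf{D}$ along a base map $f : X \to Y$, I would define $Dual(m) : Dual(\cshf{C}) \to Dual(\cshf{D})$ to have base map $\Gamma(m) : \cshf{C}(X) \to \cshf{D}(Y)$, the induced map on global cosections furnished by Lemma \ref{lem:morphisms_cosections}, and component maps given by restricting the original base map, namely $Dual(m)_\sigma = f|_{Dual(\cshf{C})(\sigma)}$. The appealing feature is that the roles of base map and component maps are exchanged relative to $m$, which is exactly the flavor of duality one wants.

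The step I expect to be the main obstacle is checking that these component maps are well-defined, i.e. that $f$ actually carries $Dual(\cshf{C})(\sigma)$ into $Dual(\cshf{D})(\Gamma(m)(\sigma))$. To see this, take any $\alpha \in X$ with $\sigma \in \cshf{C}(\alpha)$. Since $m$ is a cosheaf morphism, its component $m_\alpha : \cshf{C}(\alpha) \to \cshf{D}(f(\alpha))$ is a simplicial map, so $m_\alpha(\sigma) \in \cshf{D}(f(\alpha))$; and by the construction of $\Gamma(m)$ in Lemma \ref{lem:morphisms_cosections}, evaluated on the representative of $\sigma$ living in $\cshf{C}(\alpha)$, we have $\Gamma(m)(\sigma) = m_\alpha(\sigma)$. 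Hence $\Gamma(m)(\sigma) \in \cshf{D}(f(\alpha))$, so $f(\alpha)$ is one of the simplices whose union defines $Dual(\cshf{D})(\Gamma(m)(\sigma))$, giving $f(\alpha) \subseteq Dual(\cshf{D})(\Gamma(m)(\sigma))$. Ranging over all such $\alpha$ yields $f\bigl(Dual(\cshf{C})(\sigma)\bigr) \subseteq Dual(\cshf{D})(\Gamma(m)(\sigma))$, as needed. The same computation, run on a pair $\sigma \subseteq \sigma'$ represented in a common costalk, shows $\Gamma(m)$ is itself simplicial, so that the right-hand extension of $Dual(\cshf{D})$ is defined.

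Finally, the commuting square demanded by Definition \ref{df:morphism} is then immediate: for $\sigma \subseteq \sigma'$, both vertical maps are the inclusion extensions of $Dual(\cshf{C})$ and $Dual(\cshf{D})$ and both horizontal maps are restrictions of the single function $f$, so each composite is simply $f$ evaluated on $Dual(\cshf{C})(\sigma')$. Covariance and preservation of identities then follow by inspection: the base map of $Dual(m)$ is $\Gamma(m)$ and $\Gamma$ is a covariant functor by Lemma \ref{lem:morphisms_cosections}, while the component maps are restrictions of $f$, so $Dual(\id) = \id$ and $Dual(n \circ m) = Dual(n) \circ Dual(m)$ because restrictions of base maps compose exactly as the base maps do.
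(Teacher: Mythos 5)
Your proposal is correct and takes essentially the same approach as the paper's proof: you define $Dual(m)$ with base map the induced map on global cosections from Lemma \ref{lem:morphisms_cosections} and components the restrictions of the original base map $f$, verify well-definedness via $\Gamma(m)(\sigma) = m_\alpha(\sigma) \in \cshf{D}(f(\alpha))$, and note that the required square commutes because the extensions are inclusions and both horizontal maps are restrictions of $f$. The few details you make explicit (that the extensions of $Dual(\cshf{C})$ are inclusions because costalks are closed under faces, and the representative-level computation behind well-definedness) are points the paper's proof leaves implicit, so this is a faithful, slightly more detailed rendering of the same argument.
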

\begin{proof}
  The $Dual$ functor exchanges the base space with the space of global cosections.  For a cosheaf $\cshf{R}$ on $X$ that is an object of ${\bf CoShvAsc}$,
  \begin{equation*}
    Base(Dual(\cshf{R})) =\cshf{R}(X),
  \end{equation*}
  by definition and
  \begin{eqnarray*}
    \Gamma(Dual(\cshf{R}))&=& \left(\bigsqcup_{\sigma \in Base(Dual(\cshf{R}))} (Dual(\cshf{R}))(\sigma) \right)/\sim \\ 
    &=&\left(\bigsqcup_{\sigma \in \cshf{R}(X)} (Dual(\cshf{R}))(\sigma) \right)/\sim \\ 
    &=&\bigcup_{\sigma \in \cshf{R}(X)} (Dual(\cshf{R}))(\sigma) \\ 
    &=&\bigcup_{\sigma \in \cshf{R}(X)} \bigcup \{\alpha \in X : \sigma \in \cshf{R}(\alpha) \} \\ 
    &=&\left\{\alpha \in X : \text{there is a }\sigma \in \cshf{R}(X) \text{ such that }\sigma \in \cshf{R}(\alpha) \right\} \\ 
    &=&\left\{\alpha \in X : \text{there is a }\sigma \in \bigcup_{\tau \in X} \cshf{R}(\tau) \text{ such that }\sigma \in \cshf{R}(\alpha) \right\} \\ 
    &=& X. 
  \end{eqnarray*}

  A cosheaf morphism $m: \cshf{R} \to \cshf{S}$ along a simplicial map $f: X \to Y$ induces a map $m_*: \cshf{R}(X) \to \cshf{S}(Y)$ on each space of cosections (Lemma \ref{lem:morphisms_cosections}).
  We use these data to define a morphism $w : Dual(\cshf{R}) \to Dual(\cshf{S})$.  
  As such, the induced map $m_*$ on the space of global cosections becomes the new base space map, along which the new cosheaf morphism $w$ is written.
  The individual simplices map by way of restricting to the components of the old morphism, since $m_*$ is a simplicial map.
  Conversely, the old base space map $f$ defines the new component maps $w_\sigma$ by restriction.

  Explicitly, if $\sigma$ is a simplex of $Base(Dual(\cshf{R})) = \cshf{R}(X)$, we have that
  \begin{equation*}
    (Dual(\cshf{R}))(\sigma) = \bigcup \{\alpha \in X : \sigma \in \cshf{R}(\alpha)\}.
  \end{equation*}
  The component $w_\sigma$ must be a function $(Dual(\cshf{R}))(\sigma) \to Dual(\cshf{S})(m_*(\sigma))$.
  Since every element of $(Dual(\cshf{R}))(\sigma)$ is an $\alpha \in X$, and the domain of $f$ is $X$, we may define
  \begin{equation*}
    w_\sigma = f|_{(Dual(\cshf{R}))(\sigma)}.
  \end{equation*}
  This is well-defined because if $\sigma \in \cshf{R}(\alpha)$ then $m_*(\sigma) \in \cshf{S}(f(\alpha))$, and
  \begin{equation*}
    (Dual(\cshf{S}))(m_*(\sigma)) = \bigcup \{\beta \in Y : m_*(\sigma) \in \cshf{S}(\beta)\}.
  \end{equation*}

  To establish that these component maps form a cosheaf morphism, we need to establish that the diagram below commutes for all simplices $\alpha \subseteq \beta$ in $\cshf{R}(X)$:
  \begin{equation*}
    \xymatrix{
      (Dual(\cshf{R}))(\beta) \ar[d]_{(Dual(\cshf{R}))(\alpha \subseteq \beta)} \ar[r]^-{w_\beta}& (Dual(\cshf{S}))(m_*(\beta))\ar[d]^{(Dual(\cshf{S}))(m_*(\alpha) \subseteq m_*(\beta))}\\
      (Dual(\cshf{R}))(\alpha) \ar[r]_-{w_\alpha}& (Dual(\cshf{S}))(m_*(\alpha)) \\      
      }
  \end{equation*}
  This follows because the vertical maps are inclusions and the horizontal maps are both restrictions of $f$ to nested subsets.

  Finally, composition of morphisms is preserved because that is simply composition of the base and global cosection functions.
\end{proof}

\begin{theorem} (Cosheaf version of Dowker duality)
  \label{thm:cosheaf_dowker_duality}
  $Dual$ is a functor that makes the diagram of functors commute:
  \begin{equation*}
    \xymatrix{
      {\bf Rel}  \ar[d]_-{Transp} \ar[rr]^-{CoShvRep} && {\bf CoShvAsc} \ar[d]^{Dual} \\
      {\bf Rel}\ar[rr]_-{CoShvRep} && {\bf CoShvAsc} \\
      }
  \end{equation*}
\end{theorem}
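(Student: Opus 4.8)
The plan is to prove equality of the two composite functors $Dual\circ CoShvRep$ and $CoShvRep\circ Transp$ on both objects and morphisms, exploiting the identifications already built into the earlier results. Fix an object $(X,Y,R)$ and write $\cshf{R}=CoShvRep(X,Y,R)$. Both composites produce a cosheaf whose base space is an abstract simplicial complex, so I would begin by matching base spaces: Theorem~\ref{thm:cosheaf_global_cosections} identifies $Base(Dual(\cshf{R}))=\cshf{R}(X)=\Gamma(\cshf{R})$ with $D(Y,X,R^T)$, while $Base(CoShvRep(Transp(X,Y,R)))=D(Y,X,R^T)$ by definition. Under the vertex-name bookkeeping of that theorem these are literally the same complex, so no reindexing is needed.

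The heart of the argument is matching costalks. Fix a simplex $\sigma$ of $D(Y,X,R^T)$ and set $X_\sigma=\{x\in X:(x,y)\in R\text{ for all }y\in\sigma\}$. On the $CoShvRep\circ Transp$ side the costalk at $\sigma$ is $D(X_\sigma,\sigma,(R^T|_{\sigma,X_\sigma})^T)$, which by Lemma~\ref{lem:costalks_complete_simplex} is the complete simplex on the vertex set $X_\sigma$. On the $Dual\circ CoShvRep$ side I would unwind the definition of $Dual$: since each costalk $\cshf{R}(\alpha)$ is the complete simplex on $Y_\alpha$ (Lemma~\ref{lem:costalks_complete_simplex}), the condition $\sigma\in\cshf{R}(\alpha)$ is equivalent to $\sigma\subseteq Y_\alpha$, i.e.\ to $\alpha\subseteq X_\sigma$. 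Hence $(Dual(\cshf{R}))(\sigma)=\bigcup\{\alpha\in D(X,Y,R):\alpha\subseteq X_\sigma\}$. The key observation is that $X_\sigma$ is itself a simplex of $D(X,Y,R)$, witnessed by any $y\in\sigma$, so this union is again the complete simplex on $X_\sigma$, matching the other side exactly.

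Matching extensions is then immediate: for $\sigma\subseteq\tau$ one has $X_\tau\subseteq X_\sigma$, and both functors define the extension as the simplicial map along the inclusion $X_\tau\hookrightarrow X_\sigma$. For morphisms, given a ${\bf Rel}$ morphism $(f,g):(X_1,Y_1,R_1)\to(X_2,Y_2,R_2)$, I would track the roles of $f$ and $g$ through each path. Along $CoShvRep\circ Transp$ the morphism becomes $(g,f)$, inducing a cosheaf morphism with base map $D(g)$ and component maps given by restrictions of $f$. Along $Dual\circ CoShvRep$, the morphism $m=CoShvRep(f,g)$ has base map $D(f)$ and components given by restrictions of $g$; applying $Dual$ (using the construction in the proof that $Dual$ is a functor) swaps these, producing the new base map $m_*$ and new components that are restrictions of the old base map $D(f)$, hence of $f$. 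It remains to see $m_*=D(g)$: for a cosection $\sigma$ represented in $\cshf{R}_1(\alpha)$, Lemma~\ref{lem:morphisms_cosections} gives $m_*(\sigma)=m_\alpha(\sigma)=g(\sigma)$, so $m_*$ acts as $g$ on vertices and equals $D(g)$. Thus both paths yield the same base map and the same components.

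The main obstacle is precisely this bookkeeping swap: under $Dual$ the base map and the component functions exchange roles, so the function $f$ that indexes the base map of $CoShvRep(f,g)$ reappears as the component data of the dual morphism, while $g$---which governs the components of $CoShvRep(f,g)$---reappears as the base map via the induced cosection map $m_*$. Verifying the identity $m_*=D(g)$ at the level of vertices, and confirming that the complete-simplex costalks genuinely coincide rather than merely being isomorphic, are the two points requiring care; once the observation that $X_\sigma$ is a simplex of $D(X,Y,R)$ is in hand, the rest is routine.
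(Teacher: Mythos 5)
Your proposal is correct, and on objects its core is the same as the paper's: both arguments hinge on the observation that any vertex $y \in \sigma$ witnesses that the union of all simplices $\alpha$ with $\sigma \in \cshf{R}(\alpha)$ is itself a simplex of $D(X,Y,R)$. However, you package this more cleanly than the paper does. The paper proves that the set $\{\alpha \in X : \sigma \in \cshf{R}(\alpha)\}$ has a unique maximal element by taking a maximal $\alpha$, an arbitrary $\beta$, and showing $\beta \subseteq \alpha$ via the simplex on $\alpha \cup \beta$; you instead note directly that $\sigma \in \cshf{R}(\alpha) \iff \sigma \subseteq Y_\alpha \iff \alpha \subseteq X_\sigma$ and that $X_\sigma$ is itself a simplex of $D(X,Y,R)$ (witnessed by any $y \in \sigma$), so the union is the complete simplex on $X_\sigma$ with no maximality argument needed. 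More significantly, you verify the commutativity of the square on morphisms --- tracking how $Dual$ exchanges the roles of $f$ and $g$, and checking $m_* = D(g)$ via Lemma \ref{lem:morphisms_cosections} --- whereas the paper's proof stops after the object-level costalk computation and leaves the morphism-level agreement implicit. That verification is a genuine part of the claim that the diagram of \emph{functors} commutes, so your proof is in this respect more complete than the paper's. One small caveat applies to both arguments: the witness $y \in \sigma$ requires $\sigma$ to be nonempty, so the empty simplex (if admitted) needs separate, trivial treatment.
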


\begin{proof}
  The way that $Dual(\cshf{R})$ has been defined, we might end up with a simplicial complex as a costalk that is not a complete simplex -- which is a problem according to Lemma \ref{lem:costalks_complete_simplex} -- but this does not happen in the image of $CoShvRep$.  Suppose that $\cshf{R} = CoShvRep(X,Y,R)$.  We claim that for every simplex $\sigma$ in $\cshf{R}(X) = D(Y,X,R^T)$, the set of simplices
  \begin{equation*}
    \{ \alpha \in X : \sigma \in \cshf{R}(\alpha)\}
  \end{equation*}
  has a unique maximal simplex in the inclusion order, so that the union in the definition of $\left(Dual(\cshf{R})\right)(\sigma)$ really is just that one simplex.  To see that, suppose that $\alpha$ and $\beta$ are simplices of $X$ for which $\sigma \in \cshf{R}(\alpha)$ and $\sigma \in \cshf{R}(\beta)$.  Suppose that any other simplex $\gamma$ that contains $\alpha$ has $\sigma \notin \cshf{R}(\gamma)$, so $\alpha$ is maximal in the sense of inclusion.  We want to show that $\beta \subseteq \alpha$.  Going back to the definition of $\cshf{R}$, we have that
  \begin{equation*}
    \cshf{R}(\alpha) = D\left(Y_\alpha,\alpha,(R|_{\alpha,Y_\alpha})^T\right)
  \end{equation*}
    and
  \begin{equation*}
    \cshf{R}(\beta) = D\left(Y_\beta,\beta,(R|_{\beta,Y_\beta})^T\right).
  \end{equation*}
  Both contain $\sigma$.  What about the simplex $\delta$ whose vertices are the union of the vertices of $\alpha$ and $\beta$?  Suppose that $y\in Y$ is a vertex of $\sigma$.  This means that $y \in Y_\alpha \cap Y_\beta$, which means that $(x,y) \in R$ for every $x \in \alpha \cup \beta = \delta$.  Thus $\sigma \subseteq Y_\delta$, or in other words $\sigma \in \cshf{R}(\delta)$ according to Lemma \ref{lem:costalks_complete_simplex}.  On the other hand, if $\alpha \subsetneqq \delta$, we assumed that $\sigma \notin \cshf{R}(\delta)$.  So the only way this can happen is if $\alpha = \delta$, which implies $\beta \subseteq \alpha$.

  With this fact in hand, we can observe that
  \begin{eqnarray*}
    \left((Dual \circ CoShvRep)(X,Y,R)\right)(\sigma) &=& \left(Dual(\cshf{R})\right)(\sigma) \\
    &=& \bigcup \{ \alpha \in X : \sigma \in \cshf{R}(\alpha)\} \\
    &=& \bigcup \{\alpha \in X : \sigma \subseteq Y_\alpha\} \\
    &=& \bigcup \{\alpha \in X : \text{for all }y \in \sigma\text{ and all }x \in \alpha, \; (x,y) \in R\}\\
    &=& \{x \in X : (x,y)\text{ for all }y \in \sigma\} \\
    &=& D\left(X_\sigma,\sigma,R|_{X_\sigma,\sigma}\right) \\
    &=& \left(CoShvRep(Y,X,R^T)\right)(\sigma)\\
    &=& \left((CoShvRep \circ Transp)(X,Y,R)\right)(\sigma).
  \end{eqnarray*}
\end{proof}

\begin{figure}
  \begin{center}
    \includegraphics[height=2.75in]{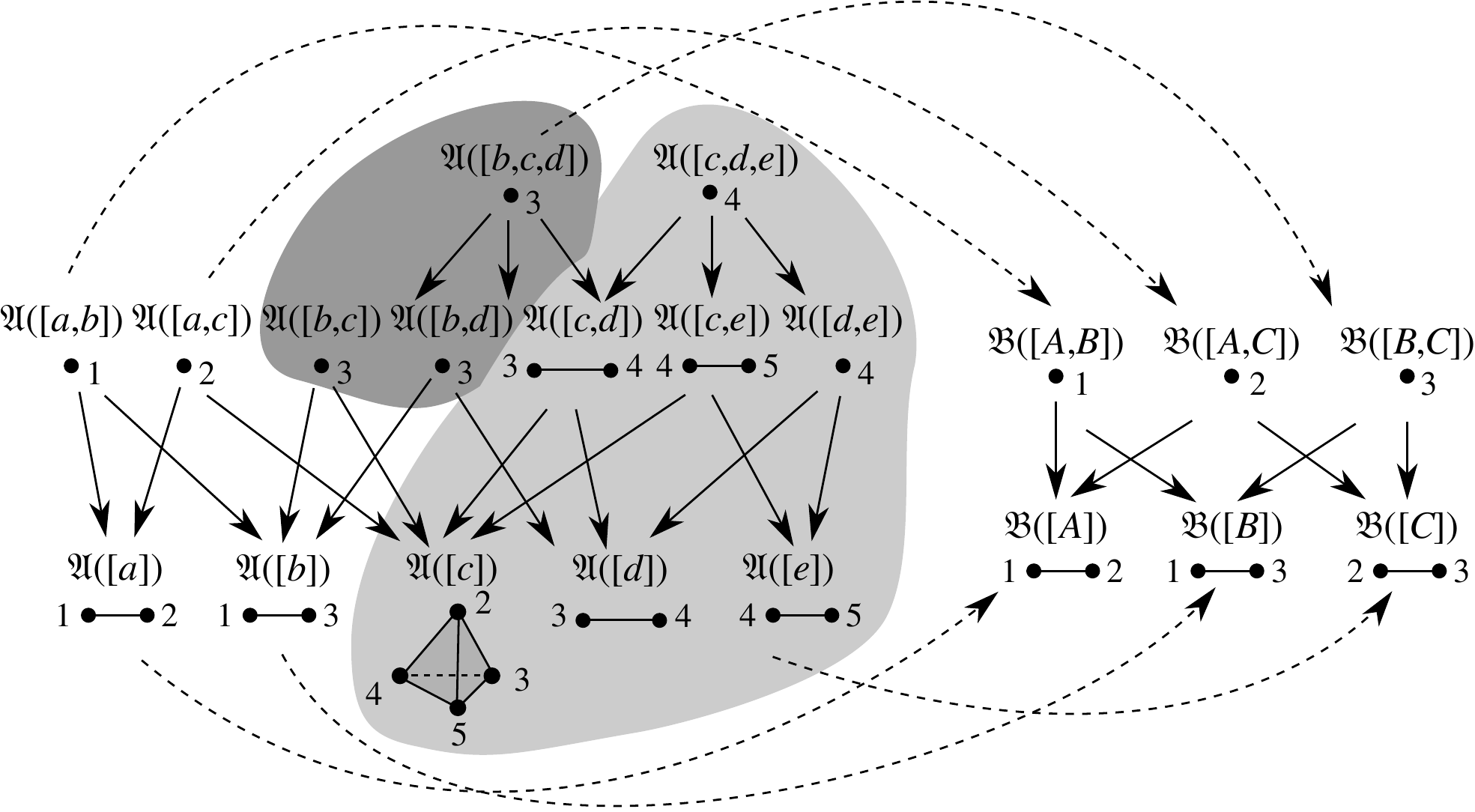}
    \caption{The cosheaf morphism induced by $CoShvRep$ described in Example \ref{eg:rel_cosheaf_dual_morph}.}
    \label{fig:rel_cosheaf_dual_morph}
  \end{center}
\end{figure}

\begin{figure}
  \begin{center}
    \includegraphics[height=2.5in]{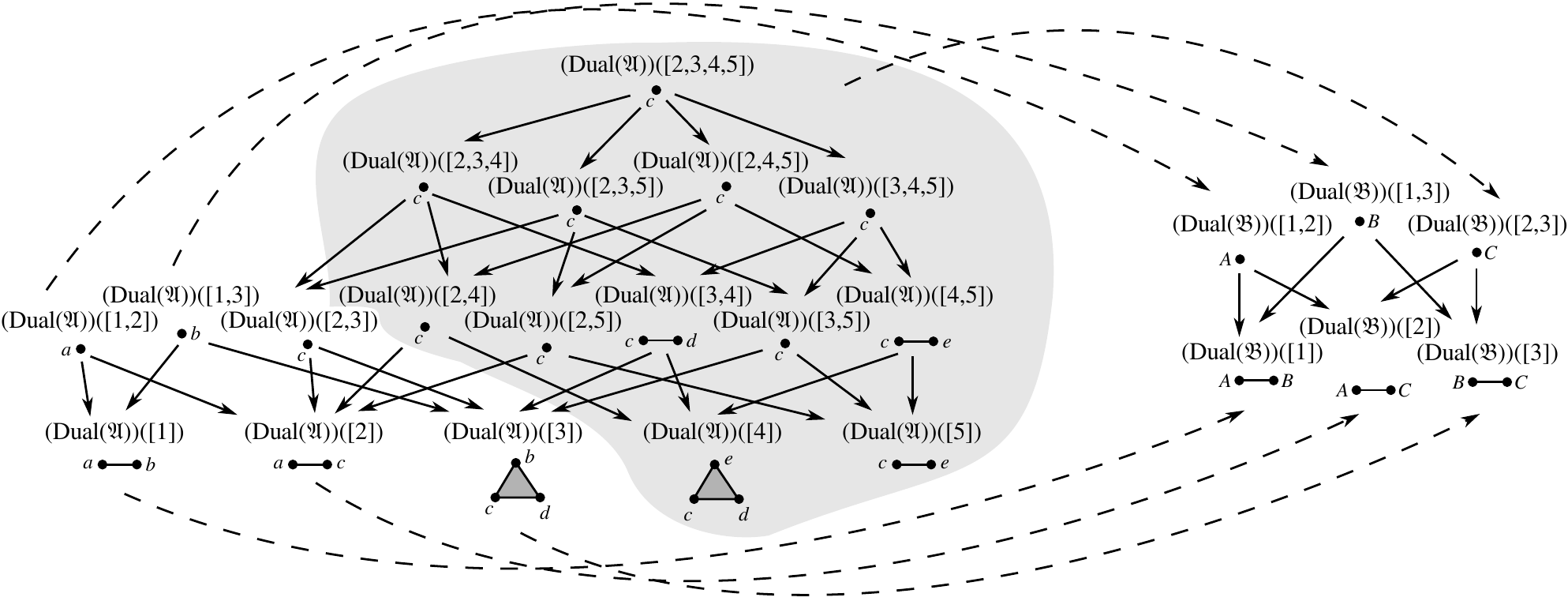}
    \caption{The cosheaf morphism induced by $Dual \circ CoShvRep$ described in Example \ref{eg:rel_cosheaf_dual_morph}.}
    \label{fig:rel_cosheaf_dual_morph_dual}
  \end{center}
\end{figure}

\begin{example}
  \label{eg:rel_cosheaf_dual_morph}
  Consider the relation morphism ${\bf Rel}$ morphism $(f,g) : (X_1,Y_1,R_1) \to (X_4,Y_4,R_4)$ defined in Example \ref{eg:rel_cosheaf_morph}.  Recall that the relation $R_1$ between the sets $X_1=\{a,b,c,d,e\}$ and $Y_1=\{1,2,3,4,5\}$, was given by the matrix
  \begin{equation*}
    r_1 = \begin{pmatrix}
      1&1&0&0&0\\
      1&0&1&0&0\\
      0&1&1&1&1\\
      0&0&1&1&0\\
      0&0&0&1&1\\
      \end{pmatrix},
  \end{equation*}
  and the relation $R_4$ between $X_4=\{A,B,C\}$ and $Y_4 =\{1,2,3\}$ was given by
  \begin{equation*}
    r_4 = \begin{pmatrix}
      1&1&0\\
      1&0&1\\
      0&1&1\\
    \end{pmatrix}.
  \end{equation*}
  The function $f: X_1 \to X_4$ was given by
  \begin{equation*}
    f(a) = A,\;     f(b) = B,\;     f(c) = C,\;     f(d) = C,\;     f(e) = C,
  \end{equation*}
  and the function $g: Y_1 \to Y_2$ was given by
  \begin{equation*}
    g(1) = 1,\;    g(2) = 2,\;    g(3) = 3,\;    g(4) = 3,\;    g(5) = 3.
  \end{equation*}
  Let is define $\cshf{A}=CoShvRep(X_1,Y_1,R_1)$ and $\cshf{B}=CoShvRep(X_4,Y_4,R_4)$.  The cosheaf morphism induced by $CoShvRep^0$ was shown in Figure \ref{fig:dowker_cosheaf_map}, but what interests us now is the cosheaf morphism $\cshf{A} \to \cshf{B}$ induced by $CoShvRep$ and $Dual(\cshf{A}) \to Dual(\cshf{B})$ induced by $Dual \circ CoShvRep$ (or equally well, induced by $CoShvRep \circ Transp$).  These two morphisms are shown in Figures \ref{fig:rel_cosheaf_dual_morph} and \ref{fig:rel_cosheaf_dual_morph_dual}, respectively.  Notice in particular that each component map (in both morphisms) is a simplicial map, so that whenever two vertices are collapsed (for instance $g(3)=g(4)=g(5)$) the associated simplices are collapsed as well.
\end{example}

\section{Redundancy of relations}

As has been explained earlier, the functor $D:{\bf Rel}\to{\bf Asc}$ is not faithful; many distinct relations have the same Dowker complex.  One way this can happen is if columns (or rows) in the matrix for the relation are \emph{redundant}, which means that a column (or row) has $1$s in all the same places as another column (or row), since this simply adds additional copies of the same simplex to the Dowker complex or its dual.  We can construct a (non-functorial) cosheaf to detect this redundancy directly using a similar construction to our earlier ones.

Since Lemma \ref{lem:costalks_complete_simplex} establishes that $D\left(Y_\sigma,\sigma,(R|_{\sigma,Y_\sigma})^T\right)$ is always a complete simplex for a relation $(X,Y,R)$ -- equivalently, the matrix for $R|_{\sigma,Y_\sigma}$ is a block of all $1$s -- it does not have any useful information beyond the vertex set $Y_\sigma$.  In a sense, $\cshf{R}^0=CoShvRep^0(X,Y,R)$ and $\cshf{R}=CoShvRep(X,Y,R)$ are basically very similar; the only difference being the topology on their costalks.

Taking a different perspective, the matrix for $R|_{X\backslash\sigma,Y_\sigma}$ contains all the information about the elements of $Y_\sigma$ that is not a result of their relation to elements in $\sigma$.  This observation means that we can also define a rather different cosheaf by its costalks
  \begin{equation*}
    \cshf{S}(\sigma) = D\left(Y_\sigma,X\backslash\sigma,(R|_{X\backslash\sigma,Y_\sigma})^T\right),
  \end{equation*}
  on each simplex $\sigma$ of $D(X,Y,R)$.  As in the previous constructions, we may take the extensions $\cshf{S}(\sigma \subseteq \tau)$ to be simplicial maps induced by inclusions.  The extensions are well defined because of Lemma \ref{lem:y_inclusion}.  If $[y_0,y_1, \dotsc, y_n]$ is a simplex of $D\left(Y_\tau,X\backslash\tau,(R|_{X\backslash\tau,Y_\tau})^T\right)$, then this means that there is an $x\notin \tau$ such that $(x,y_i) \in R$ for all $i=0,1,\dotsc, n$.  Evidently, $x \notin \sigma$ as well, so $[y_0,y_1, \dotsc, y_n]$ is a simplex of $D\left(Y_\sigma,X\backslash\sigma,(R|_{X\backslash\sigma,Y_\sigma})^T\right)$ as well.

  Elements of $Y_\sigma$ may not be related to any elements of $X$ besides those already in $\sigma$.  This means that $\cshf{R}^0=CoShvRep^0(X,Y,R)$ may not be a sub-cosheaf of $\cshf{S}$, because a costalk of $\cshf{S}$ may have fewer vertices than in the corresponding costalk of $\cshf{R}^0$.  Moreover, the transformation $(X,Y,R) \mapsto \cshf{S}$ is not functorial.
  
  \begin{example}
    \label{eg:redundant_nonfunctor}
    Let $(X,Y,R)$ be any relation, and let $(X',Y',R')$ be the relation given by $(\{a,b\},\{1,2\}, \{(a,1),(b,2)\})$.  If $f:X \to X'$ is the constant function that takes the value $f(x)=a$ on all $x\in X$, then the constant function $g: Y \to Y'$ with $g(y) = 1$ for all $y\in Y$ will define a ${\bf Rel}$ morphism $(f,g) : (X,Y,R) \to (X',Y',R')$.

    Notice that the cosheaf $\cshf{T}$ constructed by the recipe above on $(X',Y',R')$ has
    \begin{equation*}
      \cshf{T}([a]) =  D\left(Y_{[a]},X\backslash [a],(R|_{X\backslash [a],Y_{[a]}})^T\right) = D\left(\{1\},\{b\},\emptyset\right) = \emptyset,
    \end{equation*}
    but
    \begin{equation*}
      \cshf{S}(\sigma) = D\left(Y_\sigma,X\backslash\sigma,(R|_{X\backslash\sigma,Y_\sigma})^T\right)
    \end{equation*}
    may well be nonempty.  Since $f(\sigma) = [a]$ by construction, this means that there is no way to construct a cosheaf morphism $\cshf{S} \to \cshf{T}$ along $D(f):D(X,Y,R) \to D(X',Y',R')$, since the component $m_\sigma$ would be a function
    \begin{equation*}
      m_\sigma: D\left(Y_\sigma,X\backslash\sigma,(R|_{X\backslash\sigma,Y_\sigma})^T\right) \to \emptyset,
    \end{equation*}
    which cannot exist unless the domain is empty.

    Finally, notice that restricting the domain to ${\bf Rel_+}$ does not improve matters, since $(X',Y',R')$ is already an object of ${\bf Rel_+}$.
  \end{example}

  Even in the face of situations like Example \ref{eg:redundant_nonfunctor}, sometimes a cosheaf morphism can be induced by a ${\bf Rel}$ morphism.

  \begin{figure}
  \begin{center}
    \includegraphics[height=2.75in]{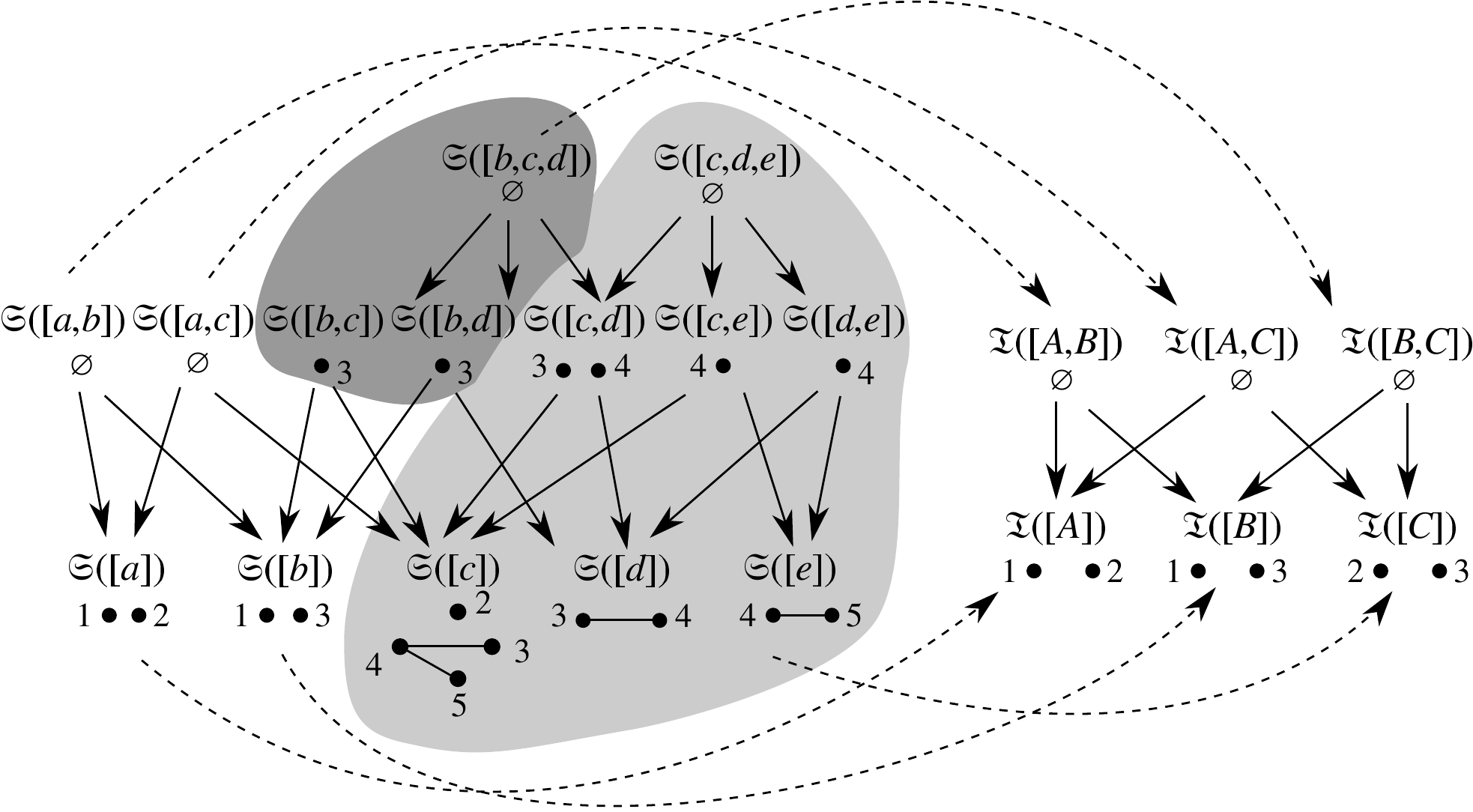}
    \caption{The cosheaf morphism described in Example \ref{eg:rel_cosheaf2_morph}.  (Compare with Figures \ref{fig:dowker_cosheaf_map} and \ref{fig:rel_cosheaf_dual_morph}, which are induced by the same ${\bf Rel}$ morphism.)} 
    \label{fig:rel_cosheaf2_morph}
  \end{center}
\end{figure}

  \begin{example}
    \label{eg:rel_cosheaf2_morph}
    Consider the ${\bf Rel}$ morphism $(f,g) : (X_1,Y_1,R_1) \to (X_4,Y_4,R_4)$ given in Example \ref{eg:rel_cosheaf_morph}.  If we construct $\cshf{S}$ from $(X_1,Y_1,R_1)$ and $\cshf{T}$ from $(X_4,Y_4,R_4)$ using the recipe above, this happens to induce a cosheaf morphism $\cshf{S} \to \cshf{T}$, which is shown in Figure \ref{fig:rel_cosheaf2_morph}.  It is immediately apparent that the costalks on all of the maximal simplices are empty.  This is a consequence of the definition: if $\sigma$ is a maximal simplex of $D(X,Y,R)$, then this means that for any $y \in Y_\sigma$, $y \notin Y_\tau$ for any strictly larger $\tau$ that contains $\sigma$.  The presence of $\emptyset$ in various costalks is not a problem for the morphism, since there always exists a unique function $\emptyset \to A$ for any set $A$.  Furthermore, even though in Example \ref{eg:redundant_nonfunctor} the presence of empty sets in the codomain caused a problem, they are benign in this case because the domain costalks are also empty.

    A little inspection reveals that the costalks identify redundant simplices in $D(Y,X,R^T)$.  Such a redundant simplex is generated by a row of the matrix for $R$ that is a proper subset of some other row.  This means that we can interpret the space of global cosections of $\cshf{S}$ (or $\cshf{T}$) as being the collection of all redundant simplices -- those whose corresponding elements of $X$ can be removed without changing the Dowker complex.
  \end{example}

\section*{Acknowledgments}
This material is based upon work supported by the Defense Advanced Research Projects Agency (DARPA) SafeDocs program under contract HR001119C0072.  Any opinions, findings and conclusions or recommendations expressed in this material are those of the author and do not necessarily reflect the views of DARPA.
 
\bibliographystyle{plainnat}
\bibliography{relations_bib}
\end{document}